\documentclass{article}

\usepackage{authblk}
\usepackage{a4wide}
\usepackage{amsthm}
\usepackage{graphicx}
\usepackage{amssymb}
\usepackage{amsmath}
\usepackage{ascmac}
\usepackage{setspace}
\usepackage{float}
\usepackage{algorithm}
\usepackage{algorithmic}
\usepackage{setspace}
\usepackage{tikz-cd}
\usepackage{appendix}
\newtheorem{Theorem}[equation]{Theorem}

\newtheorem{Lemma}[equation]{Lemma}

\theoremstyle{definition}
\newtheorem{Definition}[equation]{Definition}

\theoremstyle{remark}
\newtheorem{Remark}[equation]{Remark}
\numberwithin{equation}{section}

\DeclareMathOperator{\ev}{ev}
\DeclareMathOperator{\id}{id}

\DeclareMathOperator{\ad}{ad}

\DeclareMathOperator{\row}{row}
\DeclareMathOperator{\col}{col}

\newcommand{\ve}{\varepsilon}

\allowdisplaybreaks
\title{Affine super Yangians and non-rectangular $W$-superalgebras}
\author{Mamoru Ueda\thanks{mueda@ualberta.ca}}
\affil{Department of Mathematical and Statistical Sciences, University of Alberta, 11324 89 Ave NW, Edmonton, AB T6G 2J5, Canada}
\date{}
\begin{document}
\maketitle
\begin{abstract}
We construct four edge contractions for the affine super Yangian of type $A$. By using these edge contractions, we give a homomorphism from the affine super Yangian of type $A$ to the universal enveloping algebra of the non-rectangular $W$-superalgebra of type $A$.
\end{abstract}
\section{Introduction}
For a finite dimensional simple Lie algebra $\mathfrak{g}$, Drinfeld (\cite{D1}, \cite{D2}) defined the (finite) Yangian $Y_\hbar(\mathfrak{g})$ as a deformation of the current algebra $\mathfrak{g}\otimes\mathbb{C}[z]$. The finite Yangian of type $A$ has several presentations: RTT presentation, current presentation, parabolic presentation and so on. By using the current presentation, the definition of the Yangian $Y_\hbar(\mathfrak{g})$ can be extended to a Kac-Moody Lie algebra. 
In the case that the $\mathfrak{g}$ is of affine type, the coproduct for the Yangian was given by Boyarchenko-Levendorski\u{\i} \cite{BL} (type $A^{(1)}_1$ case), 
Guay-Nakajima-Wendlandt \cite{GNW} (except of types $A^{(1)}_1$ and $A^{(2)}_2$) and the author \cite{U1} (type $A^{(2)}_2$ case). In type $A$ setting, Guay (\cite{Gu2} and \cite{Gu1}) introduced the 2-parameter affine Yangian associated with $\widehat{\mathfrak{sl}}(n)$, $Y_{\hbar,\ve}(\widehat{\mathfrak{sl}}(n))$, which is the deformation of the universal enveloping algebra of the universal central extension of $\mathfrak{sl}(n)[u^{\pm1},v]$. Guay also gave a non-trivial homomorphism from the affine Yangian $Y_{\hbar,\ve}(\widehat{\mathfrak{sl}}(n))$ to the standard degreewise completion of the universal enveloping algebra of $\widehat{\mathfrak{gl}}(n)$, which is called the evaluation map.

Recently, the Yangians are used for the study of $W$-algebras.
It was shown by Ragoucy-Sorba \cite{RS} that there exist surjective homomorphisms from finite Yangians of type $A$ to finite rectangular $W$-algebras of type $A$. More generally, Brundan and Kleshchev (\cite{BK}) wrote down a finite $W$-algebra of type $A$ as a quotient algebra of a shifted Yangian of type $A$. The shifted Yangian of type $A$ contains finite Yangians of type $A$ as subalgebras. By restricting the homomorphism in \cite{BK} to a finite Yangian of type $A$, we obtain a homomorphism from a finite Yangian of type $A$ to a finite $W$-algebra of type $A$. De Sole-Kac-Valeri \cite{DKV} gave this homomorphism by using the Lax operator.

In the affine setting, by using a geometric realization of the Yangian, Schiffman-Vasserot (\cite{SV}) constructed a surjective homomorphism from the Yangian of $\widehat{\mathfrak{gl}}(1)$ to the universal enveloping algebras of the principal $W$-algebras of type $A$ and have proved the celebrated AGT conjecture (\cite{Ga}, \cite{BFFR}). For rectangular $W$-algebras,
we \cite{U4} constructed a homomorphism from the affine Yangian $Y_{\hbar,\ve}(\widehat{\mathfrak{sl}}(n))$ to the universal enveloping algebra of the rectangular $W$-algebra $\mathcal{W}^k(\mathfrak{gl}(ln),(l^n))$. Kodera and the author \cite{KU} gave another proof to \cite{U4} by using the coproduct and evaluation map for the affine Yangian. For a non-rectangular $W$-algebra, the author (\cite{U7},\cite{U9}) gave a homomorphism from the affine Yangian $Y_{\hbar,\ve}(\widehat{\mathfrak{sl}}(n))$ to the universal enveloping algebra of a non-rectangular $W$-algebra of type $A$, which is an affinization of the homomorphism of De Sole-Kac-Valeri. In \cite{U11} and \cite{U12}, we constructed this homomorphism by using the coproduct and evaluation map for the affine Yangian of type $A$.
We expect that this homomprphism is helpful to resolve Crutzig-Diaconescu-Ma's conjecture \cite{CE}, which notes that an action of an iterated $W$-algebra of type $A$ on the equivariant homology space of the affine Laumon space will be given through an action of an shifted affine Yangian constructed in \cite {FT}. 

As for the Lie superalgebras, Nazarov \cite{Na} introduced the Yangian associated with $\mathfrak{gl}(m|n)$ and Stukopin \cite{S} defined the Yangian of $\mathfrak{sl}(m|n)$. The author \cite{U2} defined the affine super Yangian associated with $\widehat{\mathfrak{sl}}(m|n)$. In \cite{U2}, we constructed the coproduct for the affine super Yangian:
\begin{equation*}
\Delta\colon Y_{\hbar,\ve}(\widehat{\mathfrak{sl}}(m|n))\to Y_{\hbar,\ve}(\widehat{\mathfrak{sl}}(m|n))\widehat{\otimes}Y_{\hbar,\ve}(\widehat{\mathfrak{sl}}(m|n)),
\end{equation*}
where $Y_{\hbar,\ve}(\widehat{\mathfrak{sl}}(m|n))\widehat{\otimes}Y_{\hbar,\ve}(\widehat{\mathfrak{sl}}(m|n))$ is the standard degreewise completion of $\bigotimes^2Y_{\hbar,\ve}(\widehat{\mathfrak{sl}}(m|n))$. In \cite{U2}, we also gave the evaluation map
\begin{equation*}
\ev\colon Y_{\hbar,\ve}(\widehat{\mathfrak{sl}}(m|n))\to\mathcal{U}(\widehat{\mathfrak{gl}}(m|n)),
\end{equation*}
where $\mathcal{U}(\widehat{\mathfrak{gl}}(m|n))$ is the standard degreewise completion of $U(\widehat{\mathfrak{gl}}(m|n))$. 
A relationship between super Yangians and finite $W$-superalgebras was constructed by Briot and Ragoucy \cite{BR} for the rectangular case and by Peng \cite{Pe} for more general case.  In affine setting, Gaberdiel, Li, Peng and Zhang (\cite{GLPZ}) defined the Yangian for the affine Lie superalgebra $\widehat{\mathfrak{gl}}(1|1)$ and obtained a result similar to \cite{SV} in the super setting. As for rectangular $W$-superalgebras, we \cite{U4} gave a homomorphism from the affine super Yangian to the universal enveloping algebra of a $W$-superalgebra of type $A$.

In this article, we construct a homomorphism from the affine super Yangian $Y_{\hbar,\ve}(\widehat{\mathfrak{sl}}(m|n))$ to the universal enveloping algebra of a $W$-superalgebra of type $A$. 
Let us set
\begin{align*}
M&=\displaystyle\sum_{i=1}^lu_i,\qquad u_1\geq u_{2}\geq\cdots\geq u_l\geq u_{l+1}=0,\nonumber\\
N&=\displaystyle\sum_{i=1}^lq_i,\qquad q_1\geq q_{2}\geq\cdots\geq q_l\geq q_{l+1}=0,
\end{align*}
and assume that $u_l+q_l\neq0$ and $M\neq N$. Let us take a nilpotent element $f\in\mathfrak{gl}(M|N)$ of type $(1^{u_1-u_2|q_1-q_2},2^{u_2-u_3|q_2-q_3},\cdots,l^{u_l-u_{l+1}|q_l-q_{l+1}})$.

We use the Miura map for the $W$-superalgebra $\mathcal{W}^k(\mathfrak{gl}(M|N),f)$ given by Nakatsuka \cite{Nak}. The Miura map induces a homomorphism
\begin{equation*}
\widetilde{\mu}\colon \mathcal{U}(\mathcal{W}^{k}(\mathfrak{gl}(M|N),f))\to {\widehat{\bigotimes}}_{1\leq s\leq l}U(\widehat{\mathfrak{gl}}(u_s|q_s)),
\end{equation*}
where ${\widehat{\bigotimes}}_{1\leq s\leq l}U(\widehat{\mathfrak{gl}}(u_s|q_s))$ is the standard degreewise completion of $\bigotimes_{1\leq s\leq l}U(\widehat{\mathfrak{gl}}(u_s|q_s))$. We construct two edge contractions for the affine super Yangian:
\begin{gather*}
\Psi_1^{m_1|n_1,m_1+m_2|n_1+n_2}\colon Y_{\hbar,\ve}(\widehat{\mathfrak{sl}}(m_1|n_1))\to \widetilde{Y}_{\hbar,\ve}(\widehat{\mathfrak{sl}}(m_1+m_2|n_1+n_2)),\\
\Psi_2^{m_2|n_2,m_1+m_2|n_1+n_2}\colon Y_{\hbar,\ve+(m_1-n_1)\hbar}(\widehat{\mathfrak{sl}}(m_2|n_2))\to \widetilde{Y}_{\hbar,\ve}(\widehat{\mathfrak{sl}}(m_1+m_2|n_1+n_2)).
\end{gather*}
\begin{Theorem}\label{A}
Let us assume that $u_s-u_{s+1},q_s-q_{s+1}\geq2$ and $u_s-u_{s+1}+q_s-q_{s+1}\geq5$ and $\ve_s=(k+M-N-u_s+q_s)\hbar$. Then, there exists an algebra homomorphism 
\begin{equation*}
\Phi\colon Y_{\hbar,\ve_s}(\widehat{\mathfrak{sl}}(u_s-u_{s+1}|q_s-q_{s+1}))\to \mathcal{U}(\mathcal{W}^{k}(\mathfrak{gl}(N),f))
\end{equation*} 
determined by
\begin{equation*}
\ev^{\otimes s}\circ(\prod_{a=1}^{s-1}((\Psi_2^{u_{a+1}|q_{a+1},u_a|q_a}\otimes\id)\circ\Delta)\otimes\id^{s-a-1})\circ\Psi_1^{u_s-u_{s+1}|q_s-q_{s+1},u_s|q_s}=\widetilde{\mu}\circ\Phi.
\end{equation*}
\end{Theorem}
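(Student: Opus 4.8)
Write $\Theta$ for the composite homomorphism on the left-hand side of the asserted identity. The plan is to recognize $\Theta$ as a well-defined algebra homomorphism whose image lands inside $\Ima(\widetilde{\mu})$, and then to set $\Phi:=\widetilde{\mu}^{-1}\circ\Theta$; since $\widetilde{\mu}$ is injective (the Miura map of \cite{Nak} is), this makes $\Phi$ the unique algebra homomorphism with $\widetilde{\mu}\circ\Phi=\Theta$, which is exactly the assertion. First I would note that each factor of $\Theta$ is already an algebra homomorphism: $\Delta$ and $\ev$ are the coproduct and evaluation map of \cite{U2}, and $\Psi_1^{\bullet}$, $\Psi_2^{\bullet}$ are the edge contractions constructed earlier in the paper. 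Two bookkeeping points have to be checked. First, the spectral parameters must match along the chain: the shift built into $\Psi_2^{u_{a+1}|q_{a+1},u_a|q_a}$ is $(u_a-u_{a+1}-q_a+q_{a+1})\hbar$, so that starting from $\ve_s=(k+M-N-u_s+q_s)\hbar$ and running through $\Psi_1$ and the iterated $\Psi_2$'s, the $t$-th tensor factor becomes $\widetilde{Y}_{\hbar,(k+M-N-u_t+q_t)\hbar}(\widehat{\mathfrak{sl}}(u_t|q_t))$, on which $\ev$ produces exactly the level occurring in the $t$-th Miura factor. Second, the various degreewise completions must be compatible, so that the iterated composition through the $\widehat{\otimes}$'s and the $\widetilde{Y}$'s is defined and continuous. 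Granting this, $\Theta$ is an algebra homomorphism from $Y_{\hbar,\ve_s}(\widehat{\mathfrak{sl}}(u_s-u_{s+1}|q_s-q_{s+1}))$ into ${\widehat{\bigotimes}}_{1\leq t\leq s}U(\widehat{\mathfrak{gl}}(u_t|q_t))$, which we view inside ${\widehat{\bigotimes}}_{1\leq t\leq l}U(\widehat{\mathfrak{gl}}(u_t|q_t))$ by acting as the identity on the last $l-s$ factors.

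It then remains to prove $\Ima(\Theta)\subseteq\Ima(\widetilde{\mu})=:\mathcal{M}$. Since $\mathcal{M}$ is a subalgebra, it is enough to check $\Theta(g)\in\mathcal{M}$ for $g$ in a generating set of the affine super Yangian; in the Drinfeld presentation one may take $g\in\{X^{\pm}_{i,0},X^{\pm}_{i,1}\}$, with $i$ running over the nodes of the affine Dynkin diagram of $\widehat{\mathfrak{sl}}(u_s-u_{s+1}|q_s-q_{s+1})$, the remaining generators $H_{i,r}$ being obtained by bracketing. Using the explicit formulas for $\ev$, $\Delta$ and the two edge contractions, I would compute $\Theta(X^{\pm}_{i,0})$ and $\Theta(X^{\pm}_{i,1})$ as explicit elements of the completed tensor product; they should come out supported on at most two consecutive tensor factors, with coefficients in the relevant $\widehat{\mathfrak{gl}}(u_t|q_t)$.

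The central step is then to show each of these explicit elements lies in $\mathcal{M}$. Concretely, I would match $\Theta(X^{\pm}_{i,r})$, $r=0,1$, with the $\widetilde{\mu}$-images of explicit generators of $\mathcal{W}^{k}(\mathfrak{gl}(M|N),f)$ — those attached to the $s$-th block of $f$, which, by the shape of the pyramid, only touches the first $s$ columns and hence the first $s$ Miura factors. Equivalently, one checks that every screening operator of the free-field (Miura) realization of $\mathcal{W}^{k}(\mathfrak{gl}(M|N),f)$ annihilates $\Theta(X^{\pm}_{i,r})$; since each screening operator acts on only two adjacent tensor factors, this reduces to verifying that the super signs produced by $\ev$ and by the two edge contractions are arranged so that the two resulting contributions cancel. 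In either formulation the computation closely parallels the rectangular and non-super cases of \cite{U4}, \cite{U11}, \cite{U12}. This is where the hypotheses $u_s-u_{s+1},q_s-q_{s+1}\geq2$ and $u_s-u_{s+1}+q_s-q_{s+1}\geq5$ enter: they keep the relevant sub-Dynkin data large enough — in particular ruling out the degenerate case $\mathfrak{sl}(2|2)$, whose affinization is not of honest type $A$ — for the edge-contraction formulas, and hence the sign cancellations, to remain valid.

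I expect the main obstacle to be precisely this last step for the degree-one generators $X^{\pm}_{i,1}$, where the evaluation-map formula is most involved, the super signs proliferate, and the computation must be carried out inside the degreewise completion. Once $\Ima(\Theta)\subseteq\mathcal{M}$ has been established there is nothing further to check — in particular no Drinfeld relations — since $\Phi$ is then obtained from the already-known homomorphism $\Theta$ merely by composing with the partial inverse of the injective map $\widetilde{\mu}$.
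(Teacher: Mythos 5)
Your strategy coincides with the paper's: the paper likewise defines $\Phi$ on the generators $X^{\pm}_{i,0}$ and $X^{+}_{1,1}$ by explicit elements $W^{(1)}_{a,b}$, $W^{(2)}_{a,b}$ of $\mathcal{W}^{k}(\mathfrak{gl}(M|N),f)$ and verifies $\widetilde{\mu}\circ\Phi=\Theta$ on those generators by exactly the term-by-term matching of $\Theta(X^{+}_{1,1})$ against $\widetilde{\mu}(W^{(2)}_{u_1-u_s+1,u_1-u_s+2}t)$ that you outline, the defining relations for $\Phi$ then following from the injectivity of the Miura map as you argue. The only cosmetic differences are that the paper reduces the degree-one check to the single generator $X^{+}_{1,1}$ (the remaining $X^{\pm}_{i,1}$ being generated from it and the $X^{\pm}_{i,0}$), and that the resulting elements are supported on all $s$ tensor factors rather than on two consecutive ones; neither affects your argument.
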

Theorem~\ref{A} is helpful for the generalized Gaiotto-Rapcak's triality.
Gaiotto-Rapcak \cite{GR} introduced a vertex algebra called the $Y$-algebra. The $Y$-algebra can be interpreted as a truncation of $\mathcal{W}_{1+\infty}$-algebra (\cite{GG}) whose universal enveloping algebra is isomorphic to the affine Yangian of $\widehat{\mathfrak{gl}}(1)$ up to suitable completions (see \cite{AS}, \cite{T} and \cite{MO}). Creutzig-Linshaw \cite{CR} have proved the triality conjecture in some cases. They gave the isomorphism between affine cosets of some $W$-algebras and those of some $W$-superalgebras.
This result is the generalization of the Feigin-Frenkel duality and the coset realization of principal $W$-algebra. 
Similarly to edge contractions of the affine Yangian (\cite{U10}), we expect that the images of $\Psi_1^{m_1|n_1,m_1+m_2|n_1+n_2}$ and $\Psi_2^{m_2|n_2,m_1+m_2|n_1+n_2}$ are commutative with each other. If we can show this commutativity, we obtain a homomorphism from the affine super Yangian of type $A$ to some cosets of $W$-superalgebras of type $A$ in the similar way to \cite{U10} and \cite{U12}. We will come back to this commutativity in the future work.

\section*{Acknowledgement}
The author expresses his sincere thanks to Thomas Creutzig for the helpful discussion. This work was supported by Japan Society for the Promotion of Science Overseas Research Fellowships,
Grant Number JP2360303.

\section{The affine Lie superalgebra $\widehat{\mathfrak{sl}}(m|n)$}
Let us take integers $m,n\geq 2$ and $m+n\geq 5$. We set
\begin{align*}
I&=\{1,2,\dots,m,-1,-2,\dots,-n\}
\end{align*}
and define the parity on $I$ by
$$p(i)=\begin{cases}
0&\text{ if }i>0,\\
1&\text{ if }i<0.
\end{cases}$$
Sometimes, we identify $I$ with $\mathbb{Z}/(m+n)\mathbb{Z}$ by corresponding $-i\in I$ to $m+i\in\mathbb{Z}/(m+n)\mathbb{Z}$ for $1\leq i\leq n$.

Let us set the Lie superalgebra $\mathfrak{gl}(m|n)=\bigoplus_{i,j\in I}\mathbb{C}E_{i,j}$ with the commutator relation:
\begin{equation*}
[E_{i,j},E_{x,y}]=\delta_{j,x}E_{i,y}-(-1)^{p(E_{i,j})p(E_{x,y})}\delta_{i,y}E_{x,j},
\end{equation*}
where $p(E_{i,j})=p(i)+p(j)$. We also define the affinization of $\mathfrak{gl}(m|n)$:
\begin{equation*}
\widehat{\mathfrak{gl}}(m|n)=\mathfrak{gl}(m|n)\otimes\mathbb{C}[t^{\pm1}]\oplus\mathbb{C}c\oplus\mathbb{C}z
\end{equation*}
whose commutator relations are given by
\begin{align*}
[E_{i,j}t^u, E_{x,y}t^v]
&=\delta_{j,x}E_{i,y}t^{u+v}-(-1)^{p(E_{i,j})p(E_{x,y})}\delta_{i,y}E_{x,j}t^{u+v}\\
&\quad+\delta_{u+v,0}u\delta_{i,y}\delta_{j,x}(-1)^{p(i)}c+\delta_{u+v,0}u\delta_{i,j}\delta_{x,y}(-1)^{p(i)+p(x)}z\\
&\qquad\text{$z$ and $c$ are central elements of }\widehat{\mathfrak{gl}}(m|n).
\end{align*}
We define Lie super subalgebras of $\mathfrak{gl}(m|n)$ and $\widehat{\mathfrak{gl}}(m|n)$:
\begin{gather*}
\mathfrak{sl}(m|n)=\{\sum_{i,j\in I}\limits b_{i,j}E_{i,j}\in\mathfrak{gl}(m|n)\mid\text{str}(\sum_{i,j\in I}\limits b_{i,j}E_{i,j})=\sum_{i\in I}(-1)^{p(i)}b_{i,i}=0\},\\
\widehat{\mathfrak{sl}}(m|n)=\mathfrak{sl}(m|n)\otimes\mathbb{C}[t^{\pm1}]\oplus\mathbb{C}c\subset\widehat{\mathfrak{gl}}(m|n).
\end{gather*}
By Theorem 3.5.1 and Theorem 4.1.1 in \cite{Yamane}, we prepare one presentation of $\widehat{\mathfrak{sl}}(m|n)$. 
We set the Cartan matrix $(a_{i,j})_{i,j\in\mathbb{Z}/(m+n)\mathbb{Z}}$ as
\begin{equation*}
a_{i,j}=\begin{cases}
(-1)^{p(i)}+(-1)^{p(i+1)}&\text{if } i=j,\\
-(-1)^{p(i+1)}&\text{if }j=i+1,\\
-(-1)^{p(i)}&\text{if }j=i-1,\\
0&\text{otherwise}.
\end{cases}
\end{equation*}
\begin{Theorem}\label{presentation}
The Lie superalgebra $\widehat{\mathfrak{sl}}(m|n)$ is isomorphic to the Lie superalgebra generated by
\begin{equation*}
\{h_i,x^\pm_i\mid0\leq i\leq m+n-1\}
\end{equation*}
generated by
\begin{gather*}
[h_{i}, h_{j}] = 0,\\
[x_{i}^{+}, x^-_{j}] = \delta_{i,j} h_{i},\\
[h_{i}, x_{j}^{\pm}] = \pm a_{i,j} x_{j}^{\pm},\\
(\ad x_{i}^{\pm})^{1+|a_{i,j}|} (x_{j}^{\pm})= 0 \text{ if }i \neq j, \\
[x^\pm_{i,0},x^\pm_{i,0}]=0\text{ if }p(i)\neq p(i+1),\\
[[x^\pm_{i-1,0},x^\pm_{i,0}],[x^\pm_{i,0},x^\pm_{i+1,0}]]=0\text{ if }p(i)\neq p(i+1),
\end{gather*}
where $x^\pm_i$ are odd if $p(i)\neq p(i+1)$ and all other generators are even.
\end{Theorem}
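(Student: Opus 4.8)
The displayed presentation is a special case of Yamane's description of affine Lie superalgebras by Chevalley-type generators and relations, so the plan is to reduce the statement to \cite[Theorems 3.5.1 and 4.1.1]{Yamane}. First I would fix the realization of the candidate generators inside $\widehat{\mathfrak{sl}}(m|n)$, reading indices cyclically via $I\cong\mathbb{Z}/(m+n)\mathbb{Z}$: for $1\le i\le m+n-1$ put $x^+_i=E_{i,i+1}\otimes1$, let $x^-_i$ be the appropriate scalar multiple of $E_{i+1,i}\otimes1$ so that $[x^+_i,x^-_i]=h_i:=(-1)^{p(i)}E_{i,i}\otimes1-(-1)^{p(i+1)}E_{i+1,i+1}\otimes1$, and at the affine node put $x^+_0=E_{-n,1}\otimes t$, $x^-_0$ the corresponding multiple of $E_{1,-n}\otimes t^{-1}$, and $h_0$ the associated coroot corrected by the central element $c$. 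A direct computation in $\widehat{\mathfrak{gl}}(m|n)$ then shows that these elements satisfy all the relations in the statement — in particular $[h_i,x^\pm_j]=\pm a_{i,j}x^\pm_j$ for the matrix $(a_{i,j})$ defined above, the nilpotency $[x^\pm_i,x^\pm_i]=0$ whenever $x^\pm_i$ is odd, and the degree-four relation at the odd nodes — and that they generate $\widehat{\mathfrak{sl}}(m|n)$.

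Next I would identify $(a_{i,j})_{i,j\in\mathbb{Z}/(m+n)\mathbb{Z}}$ with the symmetrizable affine generalized Cartan matrix attached to the non-distinguished ("cyclic") system of simple roots of $\mathfrak{sl}(m|n)$: its Dynkin diagram is the $(m+n)$-cycle, with diagonal entries $\pm2$ except for the two nodes $i$ with $p(i)\ne p(i+1)$, which are isotropic odd ($a_{i,i}=0$). Under the standing hypotheses $m,n\ge2$ and $m+n\ge5$ these two odd nodes are non-adjacent and the diagram avoids the degenerate $\mathfrak{sl}(2|2)$-configuration, so Yamane's set of defining relations for this datum reduces to precisely the relations listed in the statement, namely the Kac--Moody relations, the Serre relations $(\ad x^\pm_i)^{1+|a_{i,j}|}(x^\pm_j)=0$ for $i\ne j$, the nilpotency relations at the odd nodes, and the higher Serre relation $[[x^\pm_{i-1},x^\pm_i],[x^\pm_i,x^\pm_{i+1}]]=0$ at each odd node.

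Finally, \cite[Theorems 3.5.1 and 4.1.1]{Yamane} asserts that the list above is a complete set of defining relations for $\widehat{\mathfrak{sl}}(m|n)$ on the Chevalley generators; combined with the first step (which realizes these generators and checks that the relations hold) this produces the desired isomorphism. The one genuine obstacle is the middle step: carefully matching conventions with \cite{Yamane} — the symmetrization of $(a_{i,j})$, the choice of odd node by which the higher Serre relations are indexed, and the scalars in the definition of $x^-_i$ and $h_0$ — and confirming that under $m,n\ge2$, $m+n\ge5$ no super-Serre relations beyond those displayed are required.
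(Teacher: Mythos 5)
Your proposal is correct and follows essentially the same route as the paper, which proves this theorem simply by citing Theorems 3.5.1 and 4.1.1 of \cite{Yamane} and recording the explicit correspondence $h_i\mapsto(-1)^{p(i)}E_{i,i}-(-1)^{p(i+1)}E_{i+1,i+1}$ (with the $c$-correction at $i=0$), $x^+_i\mapsto E_{i,i+1}$, $x^+_0\mapsto E_{m+n,1}t$, etc., exactly as in your first step. Your additional remarks on verifying the relations, on the non-adjacency of the two odd nodes when $m,n\ge2$, and on matching Yamane's conventions are just the details the paper leaves implicit.
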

The correspondence is given by
\begin{gather*}
h_i\mapsto \begin{cases}
-E_{m+n,m+n}-E_{1,1}+c&\text{ if }i=0,\\
(-1)^{p(i)}E_{i,i}-(-1)^{p(i+1)}E_{i+1,i+1}&\text{ if }i\neq 0,
\end{cases}\\
x^+_i\mapsto \begin{cases}
E_{m+n,1}t&\text{ if }i=0,\\
E_{i,i+1}&\text{ if }i\neq 0,
\end{cases}\qquad x^-_i\mapsto \begin{cases}
-E_{1,m+n}t^{-1}&\text{ if }i=0,\\
(-1)^{p(i)}E_{i+1,i}&\text{ if }i\neq 0.
\end{cases}
\end{gather*}
Let us set an anti-homomorphism of $U(\mathfrak{gl}({m|n}))$.
For associative superalgebras $X$ and $X'$ and their parities $p$ and $p'$, we say that a linear map $\pi\colon X\to X'$ is an anti-homomorphism if 
\begin{equation}
\pi(xy)=(-1)^{p(x)p(y)}\pi(y)\pi(x),\ p(x)=p'(\pi(x)),\label{anti}
\end{equation}
where $x,y$ are homogeneous elements of $X$. By the relation \eqref{anti}, we find that
\begin{equation*}
\pi([x,y])=-[\pi(x),\pi(y)].
\end{equation*}
Let us set an anti-isomorphism
\begin{equation*}
\widetilde{\omega}\colon U(\widehat{\mathfrak{gl}}(m|n))\to U(\widehat{\mathfrak{gl}}(m|n))
\end{equation*}
given by
\begin{equation*}
\widetilde{\omega}(E_{i,j}t^s)=(-1)^{\delta(i>j)(p(i)+p(j))}E_{j,i}t^{-s}.
\end{equation*}
By the definition of $\widetilde{\omega}$, we have
\begin{equation*}
\widetilde{\omega}(h_i)=h_i,\ \widetilde{\omega}(x^+_{i})=(-1)^{p(i)}x^-_{i},\ \widetilde{\omega}(x^-_{i})=(-1)^{p(i+1)}(x^+_{i}).
\end{equation*}
\section{The affine super Yangian of type $A$}
The author defines the affine super Yangian in Definition 3.1 of \cite{U2}. In this article, we use the presentation of the affine super Yangian given in Proposition 2.23 of \cite{U4}.
\begin{Definition}\label{Prop32}
Let $\hbar,\ve\in\mathbb{C}$. The affine super Yangian $Y_{\hbar,\ve}(\widehat{\mathfrak{sl}}(m|n))$ is the associative superalgebra over $\mathbb{C}$ generated by
\begin{equation*}
\{X_{i,r}^\pm, H_{i,r}\mid i\in I=\mathbb{Z}/(m+n)\mathbb{Z},r=0,1\}
\end{equation*}
subject to the following defining relations:
\begin{gather}
[H_{i,r}, H_{j,s}] = 0,\label{Eq2.1}\\
[X_{i,0}^{+}, X_{j,0}^{-}] = \delta_{i,j} H_{i, 0},\label{Eq2.2}\\
[X_{i,1}^{+}, X_{j,0}^{-}] = \delta_{i,j} H_{i, 1} = [X_{i,0}^{+}, X_{j,1}^{-}],\label{Eq2.3}\\
[H_{i,0}, X_{j,r}^{\pm}] = \pm a_{i,j} X_{j,r}^{\pm},\label{Eq2.4}\\
[\tilde{H}_{i,1}, X_{j,0}^{\pm}] = \pm a_{i,j} X_{j,1}^{\pm}\text{ if }(i,j)\neq(0,m+n-1),(m+n-1,0),\label{Eq2.5}\\
[\widetilde{H}_{0,1}, X_{m+n-1,0}^{\pm}] = \pm\left(X_{m+n-1,1}^{\pm}+(\ve+\dfrac{\hbar}{2}(m-n)\hbar) X_{m+n-1, 0}^{\pm}\right),\label{Eq2.6}\\
[\widetilde{H}_{m+n-1,1}, X_{0,0}^{\pm}] = \pm\left(X_{0,1}^{\pm}-(\ve+\dfrac{\hbar}{2}(m-n)\hbar)  X_{0, 0}^{\pm}\right),\label{Eq2.7}\\
[X_{i, 1}^{\pm}, X_{j, 0}^{\pm}] - [X_{i, 0}^{\pm}, X_{j, 1}^{\pm}] = \pm a_{i,j}\dfrac{\hbar}{2} \{X_{i, 0}^{\pm}, X_{j, 0}^{\pm}\}\text{ if }(i,j)\neq(0,m+n-1),(m+n-1,0),\label{Eq2.8}\\
[X_{0, 1}^{\pm}, X_{m+n-1, 0}^{\pm}] - [X_{0, 0}^{\pm}, X_{m+n-1, 1}^{\pm}]=\pm\dfrac{\hbar}{2} \{X_{0, 0}^{\pm}, X_{m+n-1, 0}^{\pm}\}+(\ve+\dfrac{\hbar}{2}(m-n)\hbar) [X_{0, 0}^{\pm}, X_{m+n-1, 0}^{\pm}],\label{Eq2.9}\\
(\ad X_{i,0}^{\pm})^{1+|a_{i,j}|} (X_{j,0}^{\pm})= 0 \text{ if }i \neq j, \label{Eq2.10}\\
[X^\pm_{i,0},X^\pm_{i,0}]=0\text{ if }p(i)\neq p(i+1),\label{Eq2.11}\\
[[X^\pm_{i-1,0},X^\pm_{i,0}],[X^\pm_{i,0},X^\pm_{i+1,0}]]=0\text{ if }p(i)\neq p(i+1),\label{Eq2.12}
\end{gather}
where the generators $X^\pm_{i, r}$ are odd if $p(i)\neq p(i+1)$, all other generators are even and we set $\widetilde{H}_{i,1} = H_{i,1}-\dfrac{\hbar}2 H_{i,0}^2$ and $\{X,Y\}=XY+YX$. 
\end{Definition}
\begin{Remark}
In \cite{U2}, we define the affine super Yangian in the case that $m\neq n$. In Definition~\ref{Prop32}, we also define $Y_{\hbar,\ve}(\widehat{\mathfrak{sl}}(m|n))$ in the case that $m=n$ by the same relations as the case that $m\neq n$.
\end{Remark}
 Let us set an anti-isomorphism
\begin{equation*}
\omega\colon Y_{\hbar,\ve}(\widehat{\mathfrak{sl}}(m|n))\to Y_{\hbar,\ve}(\widehat{\mathfrak{sl}}(m|n))
\end{equation*}
given by
\begin{equation*}
\omega(H_{i,r})=H_{i,r},\ \omega(X^+_{i,r})=(-1)^{p(i)}X^-_{i,r},\ \omega(X^-_{i,r})=(-1)^{p(i+1)}(X^+_{i,r}).
\end{equation*}
By Definition~\ref{Prop32} and Theorem~\ref{presentation}, we have a homomorphism from $U(\widehat{\mathfrak{sl}}(m|n))$ to $Y_{\hbar,\ve}(\widehat{\mathfrak{sl}}(m|n))$ by $h_i\mapsto H_{i,0},x^\pm_i\mapsto X^\pm_{i,0}$. In order to simplify the notation, we denote the image of $x\in U(\widehat{\mathfrak{sl}}(m|n))$ via this homomorphism by $x$. 

By using the defining relations of the affine super Yangian, we find the following relations (see Theorem~3.16 in \cite{U2} and the proof of Proposition 2.23 in \cite{U4}):
\begin{gather}
[X^\pm_{i,r},X^\pm_{j,s}]=0\text{ if }|i-j|>1.\label{gather1}
\end{gather}
We take one completion of $Y_{\hbar,\ve}(\widehat{\mathfrak{sl}}(m|n))$. We set the degree of $Y_{\hbar,\ve}(\widehat{\mathfrak{sl}}(m|n))$ by
\begin{equation*}
\text{deg}(H_{i,r})=0,\ \text{deg}(X^\pm_{i,r})=\begin{cases}
\pm1&\text{ if }i=0,\\
0&\text{ if }i\neq0.
\end{cases}
\end{equation*}
We denote the standard degreewise completion of $Y_{\hbar,\ve}(\widehat{\mathfrak{sl}}(m|n))$ by $\widetilde{Y}_{\hbar,\ve}(\widehat{\mathfrak{sl}}(m|n))$. 
Let us set $A_i\in \widetilde{Y}_{\hbar,\ve}(\widehat{\mathfrak{sl}}(m|n))$ as
\begin{align*}
A_i&=\dfrac{\hbar}{2}\sum_{\substack{s\geq0\\u>v}}\limits (-1)^{p(v)}E_{u,v}t^{-s}[(-1)^{p(i)}E_{i,i},E_{v,u}t^s]+\dfrac{\hbar}{2}\sum_{\substack{s\geq0\\u<v}}\limits (-1)^{p(v)}E_{u,v}t^{-s-1}[(-1)^{p(i)}E_{i,i},E_{v,u}t^{s+1}]\\
&=\dfrac{\hbar}{2}\sum_{\substack{s\geq0\\u>i}}\limits E_{u,i}t^{-s}E_{i,u}t^s-\dfrac{\hbar}{2}(-1)^{p(i)}\sum_{\substack{s\geq0\\i>v}}\limits (-1)^{p(v)}E_{i,v}t^{-s}E_{v,i}t^s\\
&\quad+\dfrac{\hbar}{2}\sum_{\substack{s\geq0\\u<i}}\limits E_{u,i}t^{-s-1}E_{i,u}t^{s+1}-\dfrac{\hbar}{2}(-1)^{p(i)}\sum_{\substack{s\geq0\\i<v}}\limits(-1)^{p(v)} E_{i,v}t^{-s-1}E_{v,i}t^{s+1}.
\end{align*}
Similarly to Section~3 in \cite{U2}, we define the elements of $\widetilde{Y}_{\hbar,\ve}(\widehat{\mathfrak{sl}}(m|n))$
\begin{align*}
J(h_i)&=\widetilde{H}_{i,1}+A_i-A_{i+1},\ 
J(x^\pm_i)=\begin{cases}
\mp(-1)^{p(i)}[J(h_{i-1}),x^\pm_i]&\text{ if }i\neq 0,\\
\mp[J(h_1),x^\pm_0]&\text{ if }i=0.
\end{cases}
\end{align*}

Let $\alpha$ be a positive real root of $\widehat{\mathfrak{sl}}(m|n)$. We take $x^\pm_\alpha$ be a non-zero element of the root space with $\alpha$.
\begin{Lemma}[Proposition 4.26 in \cite{U2}]\label{J}
There exists a complex number $c_{\alpha,i}$ satisfying that
\begin{equation*}
(\alpha_j,\alpha)[J(h_i),x^\pm_\alpha]-(\alpha_i,\alpha)[J(h_j),x^\pm_\alpha]=\pm c_{\alpha,i}x_\alpha^\pm,
\end{equation*}
where $(\ ,\ )$ is an inner product on $\bigoplus_{i\in I}\mathbb{C}\alpha_i$ defined by $(\alpha_i,\alpha_j)=a_{i,j}$.
\end{Lemma}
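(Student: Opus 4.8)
The plan is to prove, by induction on the height $\mathrm{ht}(\alpha)$ of the positive real root $\alpha$, the sharper assertion that there is an element $\xi_{\alpha}\in\widetilde{Y}_{\hbar,\ve}(\widehat{\mathfrak{sl}}(m|n))$, depending on $\alpha$ but \emph{not} on $i$, together with scalars $d_{\alpha,i}\in\mathbb{C}$, such that $[J(h_i),x^{\pm}_{\alpha}]=\pm(\alpha_i,\alpha)\ts\xi_{\alpha}+d_{\alpha,i}\ts x^{\pm}_{\alpha}$. Once this is available the Lemma follows by a one-line computation: the combination $(\alpha_j,\alpha)[J(h_i),x^{\pm}_{\alpha}]-(\alpha_i,\alpha)[J(h_j),x^{\pm}_{\alpha}]$ has vanishing $\xi_{\alpha}$-part, hence equals $\bigl((\alpha_j,\alpha)d_{\alpha,i}-(\alpha_i,\alpha)d_{\alpha,j}\bigr)x^{\pm}_{\alpha}$, and one takes $\pm c_{\alpha,i}$ to be this scalar.

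For the base case $\alpha=\alpha_k$ one puts $\xi_{\alpha_k}:=X^{\pm}_{k,1}$ and reads the required identity off from \eqref{Eq2.5}, \eqref{Eq2.6}, \eqref{Eq2.7} applied to $\widetilde{H}_{i,1}$, combined with a direct evaluation of $[A_i-A_{i+1},x^{\pm}_{k}]$ from the explicit quadratic expression for $A_i$; the $\ve$- and $\hbar$-dependent parts (in particular the $\ve+\tfrac{\hbar}{2}(m-n)\hbar$ terms that occur only near the affine node) are absorbed into $d_{\alpha_k,i}$. For $\mathrm{ht}(\alpha)\ge2$, use Theorem~\ref{presentation} and the structure theory of real roots to write $\alpha=\alpha_k+\beta$ with $\beta$ a positive real root and $x^{\pm}_{\alpha}$ a nonzero multiple of $[x^{\pm}_{k},x^{\pm}_{\beta}]$. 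Expanding $[J(h_i),[x^{\pm}_{k},x^{\pm}_{\beta}]]$ by the super Jacobi identity, inserting the inductive hypothesis for $[J(h_i),x^{\pm}_{k}]$ and $[J(h_i),x^{\pm}_{\beta}]$, and using $(\alpha_i,\alpha)=(\alpha_i,\alpha_k)+(\alpha_i,\beta)$, one finds that $[J(h_i),x^{\pm}_{\alpha}]$ is a fixed nonzero multiple of $(\alpha_i,\alpha_k)[\xi_{\alpha_k},x^{\pm}_{\beta}]+(\alpha_i,\beta)[x^{\pm}_{k},\xi_{\beta}]$ plus a scalar multiple of $x^{\pm}_{\alpha}$. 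One then sets $\xi_{\alpha}$ to be the appropriate multiple of $[\xi_{\alpha_k},x^{\pm}_{\beta}]$, and it remains to show $[x^{\pm}_{k},\xi_{\beta}]=\xi_{\alpha}+(\text{scalar})\ts x^{\pm}_{\alpha}$; this is where relations \eqref{Eq2.8} and \eqref{Eq2.9} enter, propagated through the iterated brackets that build $x^{\pm}_{\beta}$ (and $\xi_{\beta}$), with \eqref{gather1} and the Serre-type relations \eqref{Eq2.10}--\eqref{Eq2.12} used to discard the terms that are not of weight $\alpha$.

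The hard part will be exactly this last reconciliation, i.e.\ showing that after forming the displayed combination all the potentially troublesome terms — the $\tfrac{\hbar}{2}\{\cdot,\cdot\}$ corrections produced by \eqref{Eq2.8}--\eqref{Eq2.9} and the quadratic contributions coming from $A_i$, $A_{i+1}$ and from $\tfrac{\hbar}{2}H_{i,0}^2$ inside $\widetilde{H}_{i,1}$ — cancel, leaving a genuine scalar multiple of the root vector $x^{\pm}_{\alpha}$. Equivalently, one must know that the weight-$\alpha$ subspace of $\widetilde{Y}_{\hbar,\ve}(\widehat{\mathfrak{sl}}(m|n))$ containing all the brackets in play is, in the relevant filtration degree, spanned by $x^{\pm}_{\alpha}$ and a single ``lift'' $\xi_{\alpha}$, so that $\xi_{\alpha}$ is unambiguous independently of the chosen decomposition $\alpha=\alpha_k+\beta$; this is the super-analogue of the classical one-dimensionality of $\widehat{\mathfrak{sl}}(m|n)_{\alpha}$ for real $\alpha$ and follows from the PBW-type results for the affine super Yangian established in \cite{U2}. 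With that structural input in hand the induction closes, and the remaining verification of super-signs and of the affine-node corrections is routine bookkeeping from Definition~\ref{Prop32}.
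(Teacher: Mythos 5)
First, a remark on the comparison itself: the paper offers no proof of Lemma~\ref{J} --- it is imported verbatim as Proposition 4.26 of \cite{U2} --- so the only comparison possible is with the strategy that reference can be expected to follow. Your strategy is indeed the expected one: establish the sharper statement $[J(h_i),x^\pm_\alpha]=\pm(\alpha_i,\alpha)\ts\xi_\alpha+d_{\alpha,i}\ts x^\pm_\alpha$ with $\xi_\alpha$ (essentially $J(x^\pm_\alpha)$, compare the definition of $J(x^\pm_i)$ in Section 3) independent of $i$, and then note that the bilinear combination in the Lemma annihilates the $\xi_\alpha$-part. That reduction is correct, and the resulting scalar rightly depends on both $i$ and $j$.

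Two steps of your induction, however, do not work as written. First, in the base case you set $\xi_{\alpha_k}:=X^\pm_{k,1}$ and propose to absorb the contribution of $[A_i-A_{i+1},x^\pm_k]$ into the scalar $d_{\alpha_k,i}$. That commutator is a genuinely quadratic element of weight $\alpha_k$ (sums of products of the form $E_{u,k}t^{-s}E_{k+1,u}t^{s}$ and the like); it is neither a multiple of $x^\pm_k$ nor of $X^\pm_{k,1}$, so it cannot be absorbed into $d_{\alpha_k,i}\ts x^\pm_k$. What is true is that, after pairing the two summands of $A_i-A_{i+1}$ producing the same quadratic monomial, the surviving coefficient is proportional to $(\alpha_i,\alpha_k)$, so the quadratic part must be placed \emph{inside} $\xi_{\alpha_k}$, i.e.\ $\xi_{\alpha_k}=X^\pm_{k,1}+(\text{quadratic correction})$; this is exactly why $A_i-A_{i+1}$ is added to $\widetilde{H}_{i,1}$ in the definition of $J(h_i)$ in the first place. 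Second, the structural input you invoke --- that the weight-$\alpha$ subspace of $\widetilde{Y}_{\hbar,\ve}(\widehat{\mathfrak{sl}}(m|n))$ in the relevant filtration degree is spanned by $x^\pm_\alpha$ and a single lift $\xi_\alpha$ --- is false: already the image of $U(\widehat{\mathfrak{sl}}(m|n))$ contributes infinitely many independent weight-$\alpha$ elements (products $x_\beta x_{\alpha-\beta}$, $(h\tss t^{-s})(x_\alpha t^{s})$, and so on), and the degree-one piece of the PBW filtration is a current algebra, not one-dimensional in each weight. Hence the well-definedness of $\xi_\alpha$ (its independence of the chosen decomposition $\alpha=\alpha_k+\beta$) and the cancellation of the $\tfrac{\hbar}{2}\{\cdot,\cdot\}$ corrections from \eqref{Eq2.8}--\eqref{Eq2.9} together with the cross terms from $A_i$ cannot be obtained from a dimension count; they must be verified by direct computation with the defining relations and the explicit form of $A_i$. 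That verification is the actual content of the cited proof, and it is precisely the part your proposal leaves open.
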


\section{The coproduct for the affine super Yangian}
The author \cite{U2} gave a coproduct for the affine super Yangian. By the same degree as $\widetilde{Y}_{\hbar,\ve}(\widehat{\mathfrak{sl}}(m|n))$, let us set $Y_{\hbar,\ve}(\widehat{\mathfrak{sl}}(m|n))\widehat{\otimes} Y_{\hbar,\ve}(\widehat{\mathfrak{sl}}(m|n))$ as the standard degreewise completion of $\otimes^2Y_{\hbar,\ve}(\widehat{\mathfrak{sl}}(m|n))$.
\begin{Theorem}[Theorem 4.3 in \cite{U2}]
There exists an algebra homomorphism
\begin{equation*}
\Delta\colon Y_{\hbar,\ve}(\widehat{\mathfrak{sl}}(m|n))\to Y_{\hbar,\ve}(\widehat{\mathfrak{sl}}(m|n))\widehat{\otimes} Y_{\hbar,\ve}(\widehat{\mathfrak{sl}}(m|n))
\end{equation*}
determined by
\begin{gather*}
\Delta(X^\pm_{j,0})=X^\pm_{j,0}\otimes1+1\otimes X^\pm_{j,0}\text{ for }0\leq j\leq m+n-1,\\
\Delta(X^+_{i,1})=X^+_{i,1}\otimes1+1\otimes X^+_{i,1}+B_i\text{ for }1\leq i\leq m+n-1,
\end{gather*}
where  and we set $B_i$ as
\begin{align*}
B_i&=\hbar\displaystyle\sum_{s \geq 0}  \limits\displaystyle\sum_{u=1}^{i}\limits ((-1)^{p(u)}E_{i,u}t^{-s}\otimes E_{u,i+1}t^s-(-1)^{p(u)+p(E_{i,u})p(E_{i+1,u})}E_{u,i+1}t^{-s-1}\otimes E_{i,u}t^{s+1})\\
&\quad+\hbar\displaystyle\sum_{s \geq 0}  \limits\displaystyle\sum_{u=i+1}^{m+n}\limits ((-1)^{p(u)}E_{i,u}t^{-s-1}\otimes E_{u,i+1}t^{s+1}-(-1)^{p(u)+p(E_{i,u})p(E_{i+1,u})}E_{u,i+1}t^{-s}\otimes E_{i,u}t^{s}).
\end{align*}
\end{Theorem}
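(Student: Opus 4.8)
The plan is to regard $\Delta$ as a map out of the algebra presented by Definition~\ref{Prop32}, so that it suffices to assign values to a full generating set and then check that these values respect every defining relation \eqref{Eq2.1}--\eqref{Eq2.12}; the universal property of a presentation then yields the algebra homomorphism. First I would extend the stated formulas, which are given only on $\{X^\pm_{j,0}\mid 0\le j\le m+n-1\}$ and $\{X^+_{i,1}\mid 1\le i\le m+n-1\}$, to the remaining generators. Since $H_{i,0}=[X^+_{i,0},X^-_{i,0}]$ by \eqref{Eq2.2}, the value $\Delta(H_{i,0})=H_{i,0}\otimes1+1\otimes H_{i,0}$ is forced; then $\Delta(H_{i,1})=[\Delta(X^+_{i,1}),\Delta(X^-_{i,0})]$ is determined via \eqref{Eq2.3}, and hence so is $\Delta(\widetilde H_{i,1})=\Delta(H_{i,1})-\tfrac{\hbar}{2}\Delta(H_{i,0})^2$. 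The still-missing generators $X^-_{i,1}$ and $X^+_{0,1}$ are fixed by bracketing $\Delta(\widetilde H_{j,1})$ with a suitable $\Delta(X^\pm_{i,0})$ through \eqref{Eq2.5}--\eqref{Eq2.7}, choosing a neighbouring node $j$ with $a_{j,i}\neq0$ (which exists even when $a_{i,i}=0$ in the super case), together with the anti-isomorphism $\omega$. Throughout I must confirm that each resulting expression lies in $Y_{\hbar,\ve}(\widehat{\mathfrak{sl}}(m|n))\widehat{\otimes}Y_{\hbar,\ve}(\widehat{\mathfrak{sl}}(m|n))$: the infinite sums in $B_i$ are homogeneous of bounded degree, so all the brackets formed from them are degreewise finite.

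I would then verify the relations in groups of increasing difficulty. The purely level-$0$ relations \eqref{Eq2.2}, \eqref{Eq2.4}, \eqref{Eq2.10}, \eqref{Eq2.11}, \eqref{Eq2.12} are automatic: the level-$0$ generators are sent to primitive elements, and the relations among primitive images hold because they are exactly the images under the standard primitive coproduct of $U(\widehat{\mathfrak{sl}}(m|n))$, which is already known to be an algebra homomorphism. Relation \eqref{Eq2.1} reduces to $[\Delta(H_{i,1}),\Delta(H_{j,1})]=0$, obtained by a direct commutation once $\Delta(H_{i,1})$ is written out. The genuinely super-specific bookkeeping lives in the Koszul signs $(-1)^{p(u)}$ and $(-1)^{p(E_{i,u})p(E_{i+1,u})}$ inside $B_i$; my strategy is to follow the non-super affine Yangian computation of Guay--Nakajima--Wendlandt \cite{GNW} and Guay \cite{Gu1,Gu2} and confirm that, with these signs inserted, each cancellation still goes through.

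The core of the argument is relation \eqref{Eq2.3}, whose content is twofold: that $[\Delta(X^+_{i,1}),\Delta(X^-_{j,0})]$ vanishes for $i\neq j$, and that the two expressions $[\Delta(X^+_{i,1}),\Delta(X^-_{i,0})]$ and $[\Delta(X^+_{i,0}),\Delta(X^-_{i,1})]$ for $\Delta(H_{i,1})$ agree. Since $\Delta(X^-_{j,0})$ is primitive, bracketing with it acts as a derivation on the tensor product, so this reduces to computing $[B_i,\,X^-_{j,0}\otimes1+1\otimes X^-_{j,0}]$. Here the precise shape of $B_i$ is essential: the two sums over $u\le i$ and $u\ge i+1$, with their staggered powers $t^{\pm s}$ versus $t^{\pm(s+1)}$, are engineered so that the cross terms telescope and only the intended diagonal contribution proportional to $\delta_{i,j}$ survives. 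I would carry this out first for interior index pairs, where it matches the finite super Yangian coproduct of Nazarov \cite{Na}, and then treat the affine node using the shifted brackets \eqref{Eq2.6} and \eqref{Eq2.7}.

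The hardest step, which I expect to be the main obstacle, is the pair of deformed relations \eqref{Eq2.8} and \eqref{Eq2.9}, relating $[X^\pm_{i,1},X^\pm_{j,0}]-[X^\pm_{i,0},X^\pm_{j,1}]$ to the quadratic anomaly $\pm a_{i,j}\tfrac{\hbar}{2}\{X^\pm_{i,0},X^\pm_{j,0}\}$. Under $\Delta$ both sides sprout many two-tensor terms, and one must show that the commutators $[B_i,\,\cdot\,]$ reproduce exactly the symmetrised product $\{X,X\}$ together with the extra $(\ve+\tfrac{\hbar}{2}(m-n)\hbar)$-contribution at the affine node in \eqref{Eq2.9}. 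To keep this tractable I would exploit two symmetries to cut down the cases: the diagram-rotation automorphism $i\mapsto i+1$, which reduces statements about the affine node $i=0$ to interior nodes, and the anti-isomorphism $\omega$, which exchanges the $+$ and $-$ relations and thereby halves the sign cases. Finally I would invoke the minimalistic principle: because the algebra is generated by $\{X^\pm_{i,0}\mid 0\le i\le m+n-1\}\cup\{X^+_{i,1}\mid 1\le i\le m+n-1\}$, it suffices to check the defining relations only among these generators, the remaining relations following formally; this confines the delicate $\{X,X\}$-anomaly verification to a finite, symmetry-reduced list of index pairs.
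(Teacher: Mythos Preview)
The paper does not give its own proof of this theorem: it is stated as Theorem~4.3 of \cite{U2} and simply cited, with Remark~\ref{remark} noting that the same proof extends to the case $m=n$. Your proposal therefore cannot be compared to a proof in the present paper, but it outlines the standard verification strategy (checking the defining relations \eqref{Eq2.1}--\eqref{Eq2.12} on generators, using $\omega$ to halve the $\pm$ cases), which is the approach carried out in the cited reference.
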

The homomorphism $\Delta$ can be said as the coproduct for the affine super Yangian since $\Delta$ satisfies the coassociativity.
\begin{Remark}\label{remark}
In \cite{U2}, we gave $\Delta$ for $Y_{\hbar,\ve}(\widehat{\mathfrak{sl}}(m|n))$ in the case that $m\neq n$. However, by the same proof as Theorem 4.3 in \cite{U2}, the coproduct $\Delta$ can be proven in the case that $m=n$.
\end{Remark}
\section{The evaluation map for the affine super Yangian}
The evaluation map for the affine super Yangian is a a non-trivial homomorphism from the affine super Yangian $Y_{\hbar,\ve}(\widehat{\mathfrak{sl}}(m|n))$ to the completion of the universal enveloping algebra of $\widehat{\mathfrak{gl}}(m|n)$. 
We consider a completion of $U(\widehat{\mathfrak{gl}}(m|n))/U(\widehat{\mathfrak{gl}}(m|n))(z-1)$ following \cite{MNT}. 
We take the grading of $U(\widehat{\mathfrak{gl}}(m|n))/U(\widehat{\mathfrak{gl}}(m|n))(z-1)$ as $\text{deg}(Xt^s)=s$ and $\text{deg}(c)=0$. We denote the degreewise completion of $U(\widehat{\mathfrak{gl}}(m|n))/U(\widehat{\mathfrak{gl}}(m|n))(z-1)$ by $\mathcal{U}(\widehat{\mathfrak{gl}}(m|n))$.
\begin{Theorem}[Theorem 5.1 in \cite{U2} and Theorem 3.29 in \cite{U3}]\label{thm:main}
\begin{enumerate}
\item Let us set $\hat{i}=\sum_{u=1}^i\limits(-1)^{p(u)}$ for $1\leq i\leq m+n-1$. Suppose that $\hbar\neq0$ and $c=\dfrac{\ve}{\hbar}$.
For a complex number $a$, there exists an algebra homomorphism 
\begin{equation*}
\ev_{\hbar,\ve}^{m|n,a} \colon Y_{\hbar,\ve}(\widehat{\mathfrak{sl}}(m|n)) \to \mathcal{U}(\widehat{\mathfrak{gl}}(m|n))
\end{equation*}
uniquely determined by 
\begin{gather*}
\ev_{\hbar,\ve}^{m|n,a}(X_{i,0}^{+}) = \begin{cases}
E_{m+n,1}t&\text{ if }i=0,\\
E_{i,i+1}&\text{ if }1\leq i\leq m+n-1,
\end{cases} \\
\ev_{\hbar,\ve}^{m|n,a}(X_{i,0}^{-}) = \begin{cases}
-E_{1,m+n}t^{-1}&\text{ if }i=0,\\
(-1)^{p(i)}E_{i+1,i}&\text{ if }1\leq i\leq m+n-1,
\end{cases}
\end{gather*}
and
\begin{align*}
\ev_{\hbar,\ve}^{m|n,a}(X^+_{i,1}) &=(a-\dfrac{\hat{i}}{2}\hbar) E_{i,i+1}+ (-1)^{p(i)}\hbar \displaystyle\sum_{s \geq 0}  \limits\displaystyle\sum_{k=1}^{i}\limits (-1)^{p(k)}E_{i,k}t^{-s}E_{k,i+1}t^s\\
&\quad+(-1)^{p(i)}\hbar \displaystyle\sum_{s \geq 0} \limits\displaystyle\sum_{k=i+1}^{m+n}\limits (-1)^{p(k)}E_{i,k}t^{-s-1}E_{k,i+1}t^{s+1}\text{ for }i\neq0.
\end{align*}
\item In the case that $\ve\neq 0$, the image of the evaluation map is dense in $\mathcal{U}(\widehat{\mathfrak{gl}}(m|n))$.
\end{enumerate}
\end{Theorem}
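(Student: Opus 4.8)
The plan is to define $\ev^{m|n,a}_{\hbar,\ve}$ on the generators of Definition~\ref{Prop32} and to verify that every defining relation \eqref{Eq2.1}--\eqref{Eq2.12} is preserved. The formulas in the statement fix the images of $X^\pm_{i,0}$ (all $i$) and of $X^+_{i,1}$ ($i\neq0$); the remaining generators are then forced, which is the source of uniqueness. Concretely I would set $\ev(H_{i,0})=[\ev(X^+_{i,0}),\ev(X^-_{i,0})]$ and $\ev(H_{i,1})=[\ev(X^+_{i,1}),\ev(X^-_{i,0})]$ by \eqref{Eq2.2}--\eqref{Eq2.3}, read off $\ev(X^+_{0,1})$ from \eqref{Eq2.7} (using $\widetilde{H}_{m+n-1,1}=H_{m+n-1,1}-\tfrac{\hbar}{2}H_{m+n-1,0}^2$), and define $\ev(X^-_{i,1})$ so that $\ev\circ\omega=\widetilde\omega\circ\ev$. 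Before touching relations I would record that every proposed image lies in $\mathcal{U}(\widehat{\mathfrak{gl}}(m|n))$: the sums $\sum_{s\geq0}$ in $\ev(X^+_{i,1})$ are homogeneous for the loop grading $\deg(Xt^s)=s$ and contribute only finitely many terms to each graded piece, so they converge in the degreewise completion and each relation becomes an identity that I can check grade by grade.

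I would dispose of the degree-$0$ relations first. The images of $X^\pm_{i,0}$ are exactly the Chevalley generators of $\widehat{\mathfrak{sl}}(m|n)\subset\widehat{\mathfrak{gl}}(m|n)$ under the correspondence of Theorem~\ref{presentation}, with the central element $c$ acting as $\ve/\hbar$. Hence \eqref{Eq2.2}, \eqref{Eq2.4} (for $r=0$), \eqref{Eq2.10}, \eqref{Eq2.11} and \eqref{Eq2.12} hold automatically, being precisely the Serre-type relations of the affine Lie superalgebra, and the map restricted to degree-$0$ generators factors through $U(\widehat{\mathfrak{sl}}(m|n))$. This lets me freely use the commutation relations of $\widehat{\mathfrak{gl}}(m|n)$ when expanding brackets in the next step.

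The substance of part (1) is the verification of the relations carrying a degree-$1$ generator: \eqref{Eq2.1}, \eqref{Eq2.3}, \eqref{Eq2.5}--\eqref{Eq2.9}. For these I would compute the relevant (super)commutators of the explicit quadratic expressions for $\ev(X^+_{i,1})$ directly inside $\widehat{\mathfrak{gl}}(m|n)$, arranging the double sums $\sum_{s\geq0}\sum_k$ so that the bulk terms cancel telescopically and the surviving contributions are exactly the linear right-hand sides; the signs $(-1)^{p(\cdot)}$ must be tracked through the commutator rule of $\mathfrak{gl}(m|n)$ at each step, and \eqref{gather1} localizes many brackets to adjacent nodes. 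I expect the main obstacle to be the affine-node relations \eqref{Eq2.6}--\eqref{Eq2.7} and the quadratic relation \eqref{Eq2.9}: there the $t$-shifts built into the $i=0$ generators interact with the central term $\delta_{u+v,0}\,u\,\delta_{i,y}\delta_{j,x}(-1)^{p(i)}c$ of $\widehat{\mathfrak{gl}}(m|n)$, and it is precisely this central contribution (evaluating to $\ve=\hbar c$) together with the normal-ordering constant from reordering the quadratic terms (contributing $\tfrac{\hbar}{2}(m-n)\hbar$, with $m-n$ the supertrace of the identity) that reproduces the $(\ve+\tfrac{\hbar}{2}(m-n)\hbar)$-corrections on the right-hand sides. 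Matching that constant is the delicate bookkeeping; the reduction to the finite super Yangian generated by the nodes $i\neq0$, where the statement is Nazarov's evaluation map, can be used to handle the generic relations \eqref{Eq2.5} and \eqref{Eq2.8} with minimal recomputation.

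For part (2) I would argue that the subalgebra generated by the image is dense. The degree-$0$ images already generate $U(\widehat{\mathfrak{sl}}(m|n))$, hence contain every off-diagonal $E_{i,j}t^s$ and every traceless diagonal element at each loop degree $s$, together with $c$. The only directions of $\widehat{\mathfrak{gl}}(m|n)$ missing from this are the identity directions $(\sum_iE_{i,i})t^s$. These enter through the degree-$1$ generators, whose quadratic parts genuinely involve the diagonal units $E_{i,i}$ and therefore lie in $U(\widehat{\mathfrak{gl}}(m|n))$ but not in $U(\widehat{\mathfrak{sl}}(m|n))$: commuting $\ev(X^+_{i,1})$ against degree-$0$ generators and isolating the central $c$-term, whose coefficient is proportional to $c=\ve/\hbar$, recovers $(\sum_iE_{i,i})t^s$ for every $s$ exactly when $\ve\neq0$. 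Assembling the traceless part and these identity directions shows the closure of the image is all of $\mathcal{U}(\widehat{\mathfrak{gl}}(m|n))$, which is where the hypothesis $\ve\neq0$ is essential.
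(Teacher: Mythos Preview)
The paper does not contain a proof of this theorem: it is quoted verbatim as ``Theorem 5.1 in \cite{U2} and Theorem 3.29 in \cite{U3}'' and followed only by Remark~\ref{remark0}, which observes that the same argument works when $m=n$. So there is no in-paper proof to compare against; the result is imported from the author's earlier work.

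That said, your outline is the natural strategy and is essentially what is carried out in \cite{U2}: fix the images of the degree-$0$ generators via Theorem~\ref{presentation}, force the remaining images from \eqref{Eq2.2}--\eqref{Eq2.3} and \eqref{Eq2.7}, exploit the anti-involution $\omega\leftrightarrow\widetilde{\omega}$ to halve the work on $X^-_{i,1}$, and then check \eqref{Eq2.1}, \eqref{Eq2.3}, \eqref{Eq2.5}--\eqref{Eq2.9} by direct commutator computations in $\mathcal{U}(\widehat{\mathfrak{gl}}(m|n))$. Your identification of the central term $c=\ve/\hbar$ and the normal-ordering contribution as the origin of the $(\ve+\tfrac{\hbar}{2}(m-n)\hbar)$-shift in \eqref{Eq2.6}--\eqref{Eq2.7} and \eqref{Eq2.9} is exactly right, and is the only place the hypothesis $c=\ve/\hbar$ enters. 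One small caution: your appeal to ``Nazarov's evaluation map'' for the nodes $i\neq 0$ is morally correct but not a drop-in citation, since Nazarov works in the RTT presentation of the finite super Yangian; translating to the minimalistic presentation used here still requires the same kind of quadratic commutator bookkeeping you would do anyway, so it does not actually save work. For part~(2), your density argument is the standard one and matches \cite{U3}: the missing $\mathfrak{gl}$-direction is recovered from the degree-$1$ generators precisely when $\ve\neq 0$.
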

\begin{Remark}\label{remark0}
In \cite{U2}, we gave the evaluation map for $Y_{\hbar,\ve}(\widehat{\mathfrak{sl}}(m|n))$ in the case that $m\neq n$. However, by the same proof as Theorem 4.3 in \cite{U2}, the evaluation map can be proven in the case that $m=n$.
\end{Remark}
\section{The first edge contraction of the affine super Yangian}
We construct four edge contractions for the affine super Yangian in this article.
We do not identify $I$ with $\mathbb{Z}/(m+n)\mathbb{Z}$ in this section. In this section, the parity $p$ is the parity of $\widehat{\mathfrak{sl}}(m+1|n)$ not $\widehat{\mathfrak{sl}}(m|n)$.
\begin{Theorem}
There exists a homomorphism 
\begin{equation*}
\Psi_1\colon Y_{\hbar,\ve}(\widehat{\mathfrak{sl}}(m|n))\to  \widetilde{Y}_{\hbar,\ve}(\widehat{\mathfrak{sl}}(m+1|n))
\end{equation*}
defined by
\begin{gather*}
\Psi_{1}(H_{i,0})=\begin{cases}
H_{m,0}+H_{m+1,0}&\text{ if }i=m,\\
H_{i,0}&\text{ if }i\neq m,
\end{cases}\\
\Psi_{1}(X^+_{i,0})=\begin{cases}
[X^+_{m,0},X^+_{m+1,0}]&\text{ if }i=m,\\
X^+_{i,0}&\text{ if }i\neq m,
\end{cases}\ 
\Psi_{1}(X^-_{i,0})=\begin{cases}
[X^-_{m+1,0},X^-_{m,0}]&\text{ if }i=m,\\
X^-_{i,0}&\text{ if }i\neq m,
\end{cases}
\end{gather*}
and
\begin{align*}
\Psi_{1}(H_{i,1})&=\begin{cases} 
H_{i,1}-\hbar\displaystyle\sum_{s \geq 0} \limits E_{i,m+1}t^{-s-1} E_{m+1,i}t^{s+1}\\
\qquad+\hbar\displaystyle\sum_{s \geq 0}\limits E_{i+1,m+1}t^{-s-1} E_{m+1,i+1}t^{s+1}\qquad\qquad\qquad\qquad\ \text{ if }1\leq i\leq m-1,\\
H_{m,1}+H_{m+1,1}+\hbar H_{m,0}H_{m+1,0}+\dfrac{\hbar}{2}H_{m+1,0}\\
\qquad-\hbar\displaystyle\sum_{s \geq 0} \limits E_{m,m+1}t^{-s-1} E_{m+1,m}t^{s+1}-\hbar\displaystyle\sum_{s \geq 0}\limits E_{-1,m+1}t^{-s} E_{m+1,-1}t^{s}\text{ if }i=m,\\
H_{i,1}+\dfrac{\hbar}{2}H_{i,0}+\hbar\displaystyle\sum_{s \geq 0} \limits E_{i,m+1}t^{-s} E_{m+1,i}t^{s}\\
\qquad-\hbar\displaystyle\sum_{s \geq 0}\limits E_{i-1,m}t^{-s} E_{m+1,i-1}t^{s}\qquad\qquad\qquad\qquad\quad\text{ if }-n+1\leq i\leq -1,\\
H_{-n,1}+\hbar\displaystyle\sum_{s \geq 0} \limits E_{-n,m+1}t^{-s} E_{m+1,-n}t^{s}\\
\qquad+\hbar\displaystyle\sum_{s \geq 0}\limits E_{1,m+1}t^{-s-1} E_{m+1,1}t^{s+1}\qquad\qquad\qquad\qquad\qquad\qquad\quad\text{ if }i=-n,
\end{cases}\\
\Psi_{1}(X^+_{i,1})&=\begin{cases}
 X^+_{i,1}-\hbar\displaystyle\sum_{s \geq 0}\limits E_{i,m+1}t^{-s-1} E_{m+1,i+1}t^{s+1}&\text{ if }1\leq i\leq m-1,\\
 [X^+_{m,1},X^+_{m+1,0}]-\hbar\displaystyle\sum_{s \geq 0}\limits E_{m,m+1}t^{-s-1} E_{m+1,-1}t^{s+1}&\text{ if }i=m,\\
 X^+_{i,1}+\dfrac{\hbar}{2}X^+_{i,0}-\hbar\displaystyle\sum_{s \geq 0}\limits E_{i,m+1}t^{-s} E_{m+1,i-1}t^{s}&\text{ if }-n+1\leq i\leq -1,\\
 X^+_{-n,1}-\hbar\displaystyle\sum_{s \geq 0}\limits E_{-n,m+1}t^{-s} E_{m+1,1}t^{s+1}&\text{ if }i=-n,
\end{cases}
\end{align*}
We define
\begin{align*}
\Psi_1(X^-_{i,1})&=\omega\circ\Psi_1(X^+_{i,1}).
\end{align*}
In particular, we have
\begin{align*}
\Psi_{1}(\widetilde{H}_{i,1})&=\begin{cases} 
\widetilde{H}_{i,1}-\hbar\displaystyle\sum_{s \geq 0} \limits E_{i,m+1}t^{-s-1} E_{m+1,i}t^{s+1}\\
\qquad+\hbar\displaystyle\sum_{s \geq 0}\limits E_{i+1,m+1}t^{-s-1} E_{m+1,i+1}t^{s+1}\qquad\qquad\qquad\qquad\ \text{ if }1\leq i\leq m-1,\\
\widetilde{H}_{m,1}+\widetilde{H}_{m+1,1}+\hbar H_{m,0}H_{m+1,0}+\dfrac{\hbar}{2}H_{m+1,0}\\
\qquad-\hbar\displaystyle\sum_{s \geq 0} \limits E_{m,m+1}t^{-s-1} E_{m+1,m}t^{s+1}-\hbar\displaystyle\sum_{s \geq 0}\limits E_{-1,m+1}t^{-s} E_{m+1,-1}t^{s}\text{ if }i=m,\\
\widetilde{H}_{i,1}+\dfrac{\hbar}{2}H_{i,0}+\hbar\displaystyle\sum_{s \geq 0} \limits E_{i,m+1}t^{-s} E_{m+1,i}t^{s}\\
\qquad-\hbar\displaystyle\sum_{s \geq 0}\limits E_{i-1,m}t^{-s} E_{m+1,i-1}t^{s}\qquad\qquad\qquad\qquad\quad\text{ if }-n+1\leq i\leq -1,\\
\widetilde{H}_{-n,1}+\hbar\displaystyle\sum_{s \geq 0} \limits E_{-n,m+1}t^{-s} E_{m+1,-n}t^{s}\\
\qquad+\hbar\displaystyle\sum_{s \geq 0}\limits E_{1,m+1}t^{-s-1} E_{m+1,1}t^{s+1}\qquad\qquad\qquad\qquad\qquad\qquad\quad\text{ if }i=-n.
\end{cases}
\end{align*}
\end{Theorem}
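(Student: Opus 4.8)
The plan is to verify that the proposed images of the generators $H_{i,r},X^\pm_{i,r}$ ($r=0,1$) satisfy every defining relation \eqref{Eq2.1}--\eqref{Eq2.12} of Definition~\ref{Prop32} inside $\widetilde{Y}_{\hbar,\ve}(\widehat{\mathfrak{sl}}(m+1|n))$; since each displayed sum $\sum_{s\geq0}E_{a,m+1}t^{-s}E_{m+1,b}t^{s}$ converges in the degreewise completion and the superbracket is continuous there, this produces the asserted algebra homomorphism. First I would settle the degree-zero relations \eqref{Eq2.1} (with $r=s=0$), \eqref{Eq2.2}, \eqref{Eq2.4} (with $r=0$) and the Serre-type relations \eqref{Eq2.10}--\eqref{Eq2.12}: the assignment $i\mapsto\Psi_1(H_{i,0}),\Psi_1(X^\pm_{i,0})$ is nothing but the composite of the block embedding $\widehat{\mathfrak{sl}}(m|n)\hookrightarrow\widehat{\mathfrak{sl}}(m+1|n)$ -- which, checked on matrix units, respects the supertrace constraint and the central $2$-cocycle and realizes the generators $h_m,x^\pm_m$ of the contracted node through $h_m+h_{m+1}$ and the rank-two brackets $[x^\pm_m,x^\pm_{m+1}]$ of $\widehat{\mathfrak{sl}}(m+1|n)$ -- with the canonical map $U(\widehat{\mathfrak{sl}}(m+1|n))\to Y_{\hbar,\ve}(\widehat{\mathfrak{sl}}(m+1|n))$, so these relations are automatic.

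Next I would record two reductions that cut down the remaining work. Using the explicit formulas one checks that $\omega$ fixes each $\Psi_1(H_{i,1})$ and that $\omega\circ\Psi_1=\Psi_1\circ\omega$ on the degree-zero generators -- at the contracted node this uses $p(m+1)=0$ in $\widehat{\mathfrak{sl}}(m+1|n)$ -- so, since $\Psi_1(X^-_{i,1})$ is defined as $\omega\circ\Psi_1(X^+_{i,1})$, the relations \eqref{Eq2.3} and \eqref{Eq2.5}--\eqref{Eq2.9} for the minus generators follow from their plus counterparts by applying the anti-automorphism $\omega$. Moreover, once \eqref{Eq2.4} (with $r=1$) and \eqref{Eq2.5}--\eqref{Eq2.7} are known, the quadratic relations \eqref{Eq2.8}--\eqref{Eq2.9} follow by the standard argument: reconstruct the degree-one root vectors $X^\pm_{i,1}$ from the degree-one Cartan elements via \eqref{Eq2.5}--\eqref{Eq2.7} and then use the graded Jacobi identity together with \eqref{Eq2.10}--\eqref{Eq2.12}; by \eqref{gather1} only cyclically adjacent pairs $(i,j)$ contribute, so this is a finite check. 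Finally \eqref{Eq2.1} with $(r,s)\neq(0,0)$ is either redundant in this presentation or reduces to the Cartan-weight-zero property of the correction sums together with \eqref{Eq2.1} for the larger Yangian. Hence the heart of the matter is the verification of \eqref{Eq2.3} and \eqref{Eq2.4}--\eqref{Eq2.7} for the plus generators.

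For these I would compute the relevant brackets directly in $\widetilde{Y}_{\hbar,\ve}(\widehat{\mathfrak{sl}}(m+1|n))$, using the known degree-one relations of the larger Yangian, the Leibniz rule, and the weight-zero property of the sums $\sum_{s\geq0}E_{a,m+1}t^{-s}E_{m+1,b}t^{s}$; the relation \eqref{Eq2.3} simultaneously forces and confirms the stated formula for $\Psi_1(H_{i,1})$, from which the ``in particular'' formula for $\Psi_1(\widetilde{H}_{i,1})$ follows by substitution. The step I expect to be the main obstacle is the family of relations attached to the contracted node $m$ of $\widehat{\mathfrak{sl}}(m|n)$ and to its neighbours $m-1$ and $-1$: there $\Psi_1(X^\pm_{m,0})$ and $\Psi_1(X^+_{m,1})$ are the composite expressions $[X^\pm_{m,0},X^\pm_{m+1,0}]$ and $[X^+_{m,1},X^+_{m+1,0}]-\hbar\sum_{s\geq0}E_{m,m+1}t^{-s-1}E_{m+1,-1}t^{s+1}$, so the computation forces one to commute the degree-one generator $X^+_{m,1}$ of $\widehat{\mathfrak{sl}}(m+1|n)$ past $X^+_{m+1,0}$ and past the correction sums, keeping precise track of the parities and of the $\ve$- and $\hbar$-dependent shift parameters in \eqref{Eq2.6}, \eqref{Eq2.7} and \eqref{Eq2.9}, whose discrepancy between the $(m|n)$- and the $(m+1|n)$-setting is exactly what is absorbed into the terms $\tfrac{\hbar}{2}H_{i,0}$, $\tfrac{\hbar}{2}X^\pm_{i,0}$ and $\hbar H_{m,0}H_{m+1,0}$ appearing in the formulas for $\Psi_1$. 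Once these adjacent-node cases are handled the remaining checks are routine, and collecting everything shows that $\Psi_1$ is a well-defined algebra homomorphism.
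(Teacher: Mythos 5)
Your overall skeleton matches the paper's: verify the defining relations \eqref{Eq2.1}--\eqref{Eq2.12} in the completion, dispose of the degree-zero and Serre relations as automatic, use the anti-automorphism $\omega$ to reduce the minus cases to the plus cases, and isolate the contracted node $m$ and its neighbours as the essential computations for \eqref{Eq2.3} and \eqref{Eq2.5}--\eqref{Eq2.7}. However, two of your reductions hide the actual work and, as stated, are gaps. First, you claim that \eqref{Eq2.8}--\eqref{Eq2.9} follow from \eqref{Eq2.5}--\eqref{Eq2.7} ``by the standard argument'' once the degree-one root vectors are reconstructed from the Cartan elements. That kind of redundancy is a theorem about minimalistic presentations (\`a la Guay--Nakajima--Wendlandt) which is neither stated nor invoked for the affine \emph{super} Yangian in this paper; Definition~\ref{Prop32} takes \eqref{Eq2.8}--\eqref{Eq2.9} as independent defining relations, and the paper verifies them for $\Psi_1$ by direct computation. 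These checks are not decorative: they are precisely where the correction sums such as $-\hbar\sum_{s\geq0}E_{m,m+1}t^{-s-1}E_{m+1,-1}t^{s+1}$ and the shift $\ve\mapsto\ve+\tfrac{\hbar}{2}$ hidden in \eqref{Eq2.9} are tested, and a cancellation of terms like $\tfrac{\hbar}{2}E_{m,-2}$ against $\hbar E_{m,m+1}E_{m+1,-2}$ has to come out exactly right. If you want to avoid them you must first prove (or cite) the corresponding minimalistic presentation in the super setting.

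Second, your treatment of \eqref{Eq2.1} for $(r,s)=(1,1)$ is not correct: $[\Psi_1(H_{i,1}),\Psi_1(H_{j,1})]=0$ is neither ``redundant in this presentation'' nor a consequence of the correction sums having Cartan weight zero --- weight-zero elements need not commute. This is in fact the longest part of the paper's proof. The paper writes the corrections as $P_i=(-1)^{p(i)+p(b)}\hbar\sum_{s\geq0}E_{i,b}t^{-s-a_i}E_{b,i}t^{s+a_i}$ with $b=m+1$, invokes Lemma~\ref{J} to strip off the $J(h_i)$-parts, and is left with the genuinely quadratic identity
\begin{equation*}
[A_i,P_j]-[A_j,P_i]+[P_i,P_j]=0,
\end{equation*}
i.e.\ \eqref{concl-1}, which it then verifies by a lengthy case-by-case normal-ordering computation (split according to the relative positions of $i$, $j$ and $b$). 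Your proposal contains no substitute for this step, so as written the well-definedness of $\Psi_1$ is not established. The remaining parts of your outline (the block-embedding argument for the degree-zero relations, the $\omega$-reduction, and the direct computation of \eqref{Eq2.3} and \eqref{Eq2.5}--\eqref{Eq2.7} at the contracted node) do agree with the paper's proof.
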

It is enough to show the compatibility with \eqref{Eq2.1}-\eqref{Eq2.12}. It is trivial that $\Psi_1$ is compatible with \eqref{Eq2.2}, \eqref{Eq2.4} and \eqref{Eq2.10}-\eqref{Eq2.12}. Thus, it is enough to prove the compatibility with \eqref{Eq2.1}, \eqref{Eq2.3} and \eqref{Eq2.5}-\eqref{Eq2.9}. 

By the definition of $\Psi_1$ and $\omega$, we have the relation 
\begin{gather*}
\omega\circ\Psi_1(H_{i,r})=\Psi_1(H_{i,r}),\quad\omega\circ\Psi_1(X^+_{i,r})=(-1)^{p(i)}\Psi_1(X^-_{i,r})
\end{gather*}
for $r=0,1$.
Thus, as for the compatibility with the relations \eqref{Eq2.5}-\eqref{Eq2.9}, it is enough to show the compatibility with \eqref{Eq2.5}-\eqref{Eq2.9} for $+$ by the definition of $\Psi_1(X^-_{i,1})$.
\subsection{Compatibility with \eqref{Eq2.3}}
By using the relation \eqref{Eq2.1} and \eqref{Eq2.5}-\eqref{Eq2.7}, the relation $[X^+_{i,0},X^-_{j,1}]=\delta_{i,j}H_{i,1}$ can be derived from $[X^+_{i,1},X^-_{j,0}]=\delta_{i,j}H_{i,1}$. Thus, in order to show the compatibility with \eqref{Eq2.3}, it is enough to prove the compatibility with $[X^+_{i,1},X^-_{j,0}]=\delta_{i,j}H_{i,1}$.

We only show the case that $(i,j)=(m,m)$. By the definition of $\Psi_1$, we have
\begin{align}
&\quad[\Psi_1(X^+_{m.1}),\Psi_1(X^-_{m,0})]\nonumber\\
&=[[X^+_{m,1},X^+_{m+1,0}],[X^-_{m+1,0},X^-_{m,0}]]-[\hbar\displaystyle\sum_{s \geq 0}\limits E_{m,m+1}t^{-s-1} E_{m+1,-1}t^{s+1},E_{-1,m}].\label{utyu}
\end{align}
We denote the $r$-th term of the right hand side of $(\text{equation number})$ by $(\text{equation number})_r$.
By a direct computation, we find that
\begin{align}
\eqref{utyu}_2=-\hbar\displaystyle\sum_{s \geq 0}\limits E_{m,m+1}t^{-s-1} E_{m+1,m}t^{s+1}-\hbar\displaystyle\sum_{s \geq 0}\limits E_{-1,m+1}t^{-s-1} E_{m+1,-1}t^{s+1}.\label{utyu-1}
\end{align}
By \eqref{Eq2.2}-\eqref{Eq2.5}, we obtain
\begin{align}
\eqref{utyu}_1&=[[X^+_{m,1},H_{m+1,0}],X^-_{m,0}]-[X^-_{m+1,0},[H_{m,1},X^+_{m+1,0}]]\nonumber\\
&=H_{m,1}-[[\widetilde{H}_{m,1}+\dfrac{\hbar}{2}H_{m,0}^2,X^+_{m+1,0}],X^-_{m+1,0}]\nonumber\\
&=H_{m,1}+H_{m+1,1}+\dfrac{\hbar}{2}[(H_{m,0}X^+_{m+1,0}+X^+_{m+1,0}H_{m,0}),X^-_{m+1,0}]\nonumber\\
&=H_{m,1}+H_{m+1,1}+\hbar H_{m,0}H_{m+1,0}+\dfrac{\hbar}{2}(-X^-_{m+1,0}X^+_{m+1,0}+X^+_{m+1,0}X^-_{m+1,0})\nonumber\\
&=H_{m,1}+H_{m+1,1}+\hbar H_{m,0}H_{m+1,0}-\hbar X^-_{m+1,0}X^+_{m+1,0}+\dfrac{\hbar}{2}H_{m+1,0}.\label{utyu-2}
\end{align}
By adding \eqref{utyu-1} and \eqref{utyu-2}, we have shown the relation $[\Psi_1(X^+_{m.1}),\Psi_1(X^-_{m,0})]=\Psi_1(H_{m,1})$.
\subsection{Compatibility with \eqref{Eq2.5}}
We only show the case that $(i,j)=(-1,m),(m,m)$. The other cases can be proven in a similar way.

First, we show the case that $(i,j)=(-1,m)$. By the definition of $\Psi_1$, we have
\begin{align*}
&\quad[\Psi_1(\widetilde{H}_{-1,1}),\Psi_1(X^+_{m,0})]\nonumber\\
&=[\widetilde{H}_{-1,1},[X^+_{m,0},X^+_{m+1,0}]]+\dfrac{\hbar}{2}[H_{-1,0},[X^+_{m,0},X^+_{m+1,0}]]\\
&\quad+[\hbar\displaystyle\sum_{s \geq 0} \limits E_{-1,m+1}t^{-s} E_{m+1,-1}t^{s},E_{m,-1}]-[\hbar\displaystyle\sum_{s \geq 0}\limits E_{-2,m}t^{-s} E_{m+1,-2}t^{s},E_{m,-1}]\nonumber\\
&=[X^+_{m,0},X^+_{m+1,1}]+\dfrac{\hbar}{2}E_{m,-1}-\hbar\displaystyle\sum_{s \geq 0}\limits E_{m,m+1}t^{-s} E_{m+1,-1}t^{s}\\
&=[X^+_{m,1},X^+_{m+1,0}]+\dfrac{\hbar}{2}(X^+_{m,0}X^+_{m+1,0}+X^+_{m+1,0}X^+_{m,0})+\dfrac{\hbar}{2}E_{m,-1}\\
&\quad-\hbar\displaystyle\sum_{s \geq 0}\limits E_{m,m+1}t^{-s} E_{m+1,-1}t^{s}\\
&= [X^+_{m,1},X^+_{m+1,0}]-\hbar\displaystyle\sum_{s \geq 0}\limits E_{m,m+1}t^{-s-1} E_{m+1,-1}t^{s+1},
\end{align*}
where the second equality is due to \eqref{Eq2.5} and the third equality is due to \eqref{Eq2.8}. Thus, we have shown the compatibility with \eqref{Eq2.5} in the case that $(i,j)=(-1,m)$.

Next, we show the case that $(i,j)=(m,m)$. By the definition of $\Psi_1$, we have
\begin{align*}
&\quad[\Psi_1(\widetilde{H}_{m,1}),\Psi_1(X^+_{m,0})]\nonumber\\
&=[\widetilde{H}_{m,1}+\widetilde{H}_{m+1,1},[X^+_{m,0},X^+_{m+1,0}]]+\dfrac{\hbar}{2}[H_{m+1,0},E_{m,-1}]\\
&\quad-[\hbar\displaystyle\sum_{s \geq 0} \limits E_{m,m+1}t^{-s-1} E_{m+1,m}t^{s+1},E_{m,-1}]-[\hbar\displaystyle\sum_{s \geq 0}\limits E_{-1,m+1}t^{-s} E_{m+1,-1}t^{s},E_{m,-1}]\\
&=[X^+_{m,1},X^+_{m+1,0}]-[X^+_{m,0},X^+_{m+1,1}]-\dfrac{\hbar}{2}E_{m,-1}\\
&\quad-\hbar\displaystyle\sum_{s \geq 0} \limits E_{m,m+1}t^{-s-1} E_{m+1,-1}t^{s+1}+\hbar\displaystyle\sum_{s \geq 0}\limits E_{m,m+1}t^{-s} E_{m+1,-1}t^{s}\\
&=-\dfrac{\hbar}{2}(X^+_{m,0}X^+_{m+1,0}+X^+_{m+1,0},X^+_{m,0})-\dfrac{\hbar}{2}E_{m,-1}+\hbar E_{m,m+1}E_{m+1,-1}=0,
\end{align*}
where the second equality is due to \eqref{Eq2.5} and the third equality is due to \eqref{Eq2.8}. Thus, we have shown the compatibility with \eqref{Eq2.5} in the case that $(i,j)=(m,m)$.
\subsection{Compatibility with \eqref{Eq2.6}}
By the definition of $\Psi_1$ and \eqref{Eq2.6}, we obtain
\begin{align*}
&\quad[\Psi_1(\widetilde{H}_{-n,1}),\Psi_1(X^+_{-n+1,0})]\nonumber\\
&=[\widetilde{H}_{-n,1},X^+_{-n+1,0}]+[\hbar\displaystyle\sum_{s \geq 0} \limits E_{-n,m+1}t^{-s} E_{m+1,-n}t^{s},E_{-n+1,-n}]\\
&\quad+[\hbar\displaystyle\sum_{s \geq 0}\limits E_{1,m+1}t^{-s-1} E_{m+1,1}t^{s+1},E_{-n+1,-n}]\\
&=X^+_{-n+1,1}+(\ve+\dfrac{m-n+1}{2}\hbar)X^+_{-n+1,0}-\hbar\displaystyle\sum_{s \geq 0} \limits E_{-n+1,m+1}t^{-s} E_{m+1,-n}t^{s}.
\end{align*}
\subsection{Compatibility with \eqref{Eq2.7}}
By the definition of $\Psi_1$ and \eqref{Eq2.7}, we have
\begin{align*}
&\quad[\Psi_1(\widetilde{H}_{-n+1,1}),\Psi_1(X^+_{-n,0})]\nonumber\\
&=[\widetilde{H}_{-n+1,1},X^+_{-n,0}]+[\dfrac{\hbar}{2}H_{-n+1,0}+\hbar\displaystyle\sum_{s \geq 0} \limits E_{-n+1,m+1}t^{-s} E_{m+1,-n+1}t^{s},E_{-n,1}t]\\
&\quad-[\hbar\displaystyle\sum_{s \geq 0}\limits E_{-n,m+1}t^{-s-1} E_{m+1,-n}t^{s},E_{-n,1}t]\\
&=X^+_{-n,1}-(\ve+\dfrac{m-n+1}{2}\hbar)X^+_{-n,0}+\dfrac{\hbar}{2}X^+_{-n,0}-\hbar\displaystyle\sum_{s \geq 0}\limits E_{-n,m+1}t^{-s-1} E_{m+1,1}t^{s+1}.
\end{align*}
\subsection{Compatibility with \eqref{Eq2.8}}
We only show the case that $(i,j)=(m-1,m),(-1,m)$. The other cases can be proven in a similar way.

First, we show the case that $(i,j)=(m-1,m)$. By the definition of $\Psi_1$ and \eqref{gather1}, we have
\begin{align}
&\quad[\Psi_1(X^+_{m-1,1}),\Psi_1(X^+_{m,0})]\nonumber\\
&=[X^+_{m-1,1},[X^+_{m,0},X^+_{m+1,0}]]-[\hbar\displaystyle\sum_{s \geq 0}\limits E_{m-1,m+1}t^{-s-1} E_{m+1,m}t^{s+1},E_{m,-1}]\nonumber\\
&=[[X^+_{m-1,1},X^+_{m,0}],X^+_{m+1,0}]-\hbar\displaystyle\sum_{s \geq 0}\limits E_{m-1,m+1}t^{-s-1} E_{m+1,-1}t^{s+1}\label{utyu5}
\end{align}
and
\begin{align}
&\quad[\Psi_1(X^+_{m-1,0}),\Psi_1(X^+_{m,1})]\nonumber\\
&=[X^+_{m-1,0},[X^+_{m,1},X^+_{m+1,0}]]-[E_{m-1,m},\hbar\displaystyle\sum_{s \geq 0}\limits E_{m,m+1}t^{-s-1} E_{m+1,-1}t^{s+1}]\nonumber\\
&=[[X^+_{m-1,0},X^+_{m,1}],X^+_{m+1,0}]-\hbar\displaystyle\sum_{s \geq 0}\limits E_{m-1,m+1}t^{-s-1} E_{m+1,-1}t^{s+1}.\label{utyu6}
\end{align}
By \eqref{utyu5} and \eqref{utyu6}, we obtain
\begin{align*}
&\quad[\Psi_1(X^+_{m-1,1}),\Psi_1(X^+_{m,0})]-[\Psi_1(X^+_{m-1,0}),\Psi_1(X^+_{m,1})]\\
&=-\dfrac{\hbar}{2}[\{X^+_{m-1,0},X^+_{m,0}\},X^+_{m+1,0}]=-\dfrac{\hbar}{2}\{X^+_{m-1,0},E_{m,-1}\},
\end{align*}
where the first equality is due to \eqref{Eq2.8}. Thus, we have shown the compatibility with \eqref{Eq2.8} in the case that $(i,j)=(m-1,m)$.

Next, we show the case that $(i,j)=(-1,m)$. By the definition of $\Psi_1$, we have
\begin{align}
&\quad[\Psi_1(X^+_{m,1}),\Psi_1(X^+_{-1,0})]\nonumber\\
&=[[X^+_{m,1},X^+_{m+1,0}],X^+_{-1,0}]-[\hbar\displaystyle\sum_{s \geq 0}\limits E_{m,m+1}t^{-s-1} E_{m+1,-1}t^{s+1},E_{-1,-2}]\nonumber\\
&=[[X^+_{m,1},X^+_{m+1,0}],X^+_{-1,0}]-\hbar\displaystyle\sum_{s \geq 0}\limits E_{m,m+1}t^{-s-1} E_{m+1,-2}t^{s+1}\label{utyu7}
\end{align}
and
\begin{align}
&\quad[\Psi_1(X^+_{m,0}),\Psi_1(X^+_{-1,1})]\nonumber\\
&=[[X^+_{m,0},X^+_{m+1,0}],X^+_{-1,1}]+\dfrac{\hbar}{2}[E_{m,-1},X^+_{-1,0}]-[E_{m,-1},\hbar\displaystyle\sum_{s \geq 0}\limits E_{-1,m+1}t^{-s} E_{m+1,-2}t^{s}]\nonumber\\
&=[[X^+_{m,0},X^+_{m+1,0}],X^+_{-1,1}]+\dfrac{\hbar}{2}E_{m,-2}-\hbar\displaystyle\sum_{s \geq 0}\limits E_{m,m+1}t^{-s} E_{m+1,-2}t^{s}.\label{utyu8}
\end{align}
By \eqref{Eq2.8} and \eqref{gather1}, we obtain
\begin{align}
&\quad[[X^+_{m,1},X^+_{m+1,0}],X^+_{-1,0}]-[[X^+_{m,0},X^+_{m+1,0}],X^+_{-1,1}]\nonumber\\
&=[[X^+_{m,1},X^+_{m+1,0}],X^+_{-1,0}]-[[X^+_{m,0},X^+_{m+1,0}],X^+_{-1,1}]\nonumber\\
&=[[X^+_{m,0},X^+_{m+1,1}],X^+_{-1,0}]-\dfrac{\hbar}{2}[\{X^+_{m,0},X^+_{m+1,0}\},X^+_{-1,0}]-[[X^+_{m,0},X^+_{m+1,0}],X^+_{-1,1}]\\
&=[X^+_{m,0},\dfrac{\hbar}{2}\{X^+_{m+1,0},X^+_{-1,0}\}]-\dfrac{\hbar}{2}[\{X^+_{m,0},X^+_{m+1,0}\},X^+_{-1,0}]\nonumber\\
&=\dfrac{\hbar}{2}\{E_{m,-1},X^+_{-1,0}\}-\dfrac{\hbar}{2}\{X^+_{m,0},E_{m+1,-2}\}.\label{utyu9}
\end{align}
By \eqref{utyu7}-\eqref{utyu9}, we have
\begin{align*}
&\quad[\Psi_1(X^+_{m,1}),\Psi_1(X^+_{-1,0})]-[\Psi_1(X^+_{m,0}),\Psi_1(X^+_{-1,1})]\nonumber\\
&=\dfrac{\hbar}{2}\{E_{m,-1},X^+_{-1,0}\}-\dfrac{\hbar}{2}\{X^+_{m,0},E_{m+1,-2}\}-\dfrac{\hbar}{2}E_{m,-2}+\hbar E_{m,m+1} E_{m+1,-2}\\
&=\dfrac{\hbar}{2}\{E_{m,-1},X^+_{-1,0}\}.
\end{align*}
Then, we have shown the compatibility with \eqref{Eq2.8} in the case that $(i,j)=(m,-1)$.
\subsection{Compatibility with \eqref{Eq2.9}}
By the definition of $\Psi_1$, we have
\begin{align}
&\quad[\Psi_1(X^+_{-n,1}),\Psi_1(X^+_{1-n,0})]\nonumber\\
&=[X^+_{-n,1},X^+_{1-n,0}]-[\hbar\displaystyle\sum_{s \geq 0}\limits E_{-n,m+1}t^{-s} E_{m+1,1}t^{s+1},E_{1-n,-n}]\nonumber\\
&=[X^+_{-n,1},X^+_{1-n,0}]+\hbar\displaystyle\sum_{s \geq 0}\limits E_{1-n,m+1}t^{-s}E_{m+1,1}t^{s+1}\label{utyu11}
\end{align}
and
\begin{align}
&\quad[\Psi_1(X^+_{-n,0}),\Psi_1(X^+_{1-n,1})]\nonumber\\
&=[X^+_{-n,0},X^+_{1-n,1}]+\dfrac{\hbar}{2}[X^+_{-n,0},X^+_{1-n,0}]-[E_{-n,1}t,\hbar\displaystyle\sum_{s \geq 0}\limits E_{1-n,m+1}t^{-s} E_{m+1,-n}t^{s}]\nonumber\\
&=[X^+_{-n,0},X^+_{1-n,1}]-\dfrac{\hbar}{2}E_{1-n,1}t+\hbar\displaystyle\sum_{s \geq 0}\limits E_{1-n,m+1}t^{-s} E_{m+1,1}t^{s+1}.\label{utyu12}
\end{align}
By \eqref{utyu11} and \eqref{utyu12}, we obtain
\begin{align*}
&\quad[\Psi_1(X^+_{0,1}),\Psi_1(X^+_{1-n,0})]-[\Psi_1(X^+_{0,0}),\Psi_1(X^+_{1-n,1})]\\
&=[X^+_{-n,1},X^+_{1-n,0}]-[X^+_{-n,0},X^+_{1-n,1}]+\dfrac{\hbar}{2}E_{1-n,1}t\\
&=\dfrac{\hbar}{2}\{X^+_{-n,0},X^+_{1-n,0}\}+(\ve+\dfrac{m-n+1}{2}\hbar)[X^+_{-n,0},X^+_{1-n,0}]+\dfrac{\hbar}{2}E_{1-n,1}t\\
&=\dfrac{\hbar}{2}\{X^+_{-n,0},X^+_{1-n,0}\}+(\ve+\dfrac{m-n}{2}\hbar)[X^+_{-n,0},X^+_{1-n,0}].
\end{align*}
\subsection{Compatibility with \eqref{Eq2.1}}
The compatibility with \eqref{Eq2.1} is obvious in the case that $(r,s)=(0,0),(1,0),(0,1)$. Thus, it is enough to show the case that $(r,s)=(1,1)$.

We take $1\leq b\leq m+n+1,b$ and set
\begin{align*}
a_i&=\begin{cases}
1\text{ if }1\leq i< b,\\
0\text{ if }b<i\leq m+n+1,
\end{cases}\quad
P_i=(-1)^{p(i)+p(b)}\hbar\displaystyle\sum_{s \geq 0} \limits E_{i,b}t^{-s-a_i} E_{b,i}t^{s+a_i}\text{ for }i\neq b.
\end{align*}
By the definition of $P_i$ and Lemma~\ref{J}, in order to show the compatibility of $\Psi_1$ with \eqref{Eq2.1}, it is enough to show the relation
\begin{equation}
[A_i,P_j]-[A_j,P_i]+[P_i,P_j]=0\label{concl-1}
\end{equation}
in the case that $b=m+1$. For the proof of the well-definedness of another edge contraction, we show \eqref{concl-1} for $1\leq b\leq m+n+1$.

The relation \eqref{concl-1} is trivial in the case that $i=j$. We assume that $i\neq j$.
By a direct computation, we obtain
\begin{align}
[P_i,P_j]
&=(-1)^{p(i)+p(j)+p(E_{i,b})p(E_{j,b})}\hbar^2\displaystyle\sum_{s,v \geq 0}\limits  E_{j,b}t^{-v-a_j}E_{i,j}t^{v-s+a_j-a_i}E_{b,i}t^{s+a_i}\nonumber\\
&\quad-(-1)^{p(i)+p(j)+p(E_{i,b})p(E_{j,b})}\hbar^2\displaystyle\sum_{s,v \geq 0}\limits  E_{i,b}t^{-s-a_i} E_{j,i}t^{s-v+a_i-a_j}E_{b,j}t^{v+a_j}.\label{551-0-1}
\end{align}
By the definition of $A_i$, we can divide $[A_i,P_j]$ into four pieces:
\begin{align}
[A_i,P_j]
&=[\dfrac{\hbar}{2}\sum_{\substack{s\geq0\\u>i}}\limits E_{u,i}t^{-s}E_{i,u}t^s,P_j]-[\dfrac{\hbar}{2}\sum_{\substack{s\geq0\\i>u}}\limits(-1)^{p(i)+p(u)} E_{i,u}t^{-s}E_{u,i}t^s,P_j]\nonumber\\
&\quad+[\dfrac{\hbar}{2}\sum_{\substack{s\geq0\\u<i}}\limits E_{u,i}t^{-s-1}E_{i,u}t^{s+1},P_j]-[\dfrac{\hbar}{2}\sum_{\substack{s\geq0\\i<u}}\limits (-1)^{p(i)+p(u)}E_{i,u}t^{-s-1}E_{u,i}t^{s+1},P_j].\label{9112-1}
\end{align}
We compute the right hand  side of \eqref{9112-1}. By a direct computation, we obtain
\begin{align}
&\quad[\dfrac{\hbar}{2}\sum_{\substack{s\geq0\\u>i}}\limits E_{u,i}t^{-s}E_{i,u}t^s,P_j]\nonumber\\
&=(-1)^{p(j)+p(b)}\delta(j>i)\dfrac{\hbar^2}{2}\sum_{\substack{s,v\geq0}}\limits E_{j,i}t^{-s}E_{i,b}t^{s-v-a_j}E_{b,j}t^{v+a_j}\nonumber\\
&\quad+(-1)^{p(E_{j,b})p(E_{i,j})}\delta(b>i)\dfrac{\hbar^2}{2}\sum_{\substack{s,v\geq0}}\limits E_{b,i}t^{-s}E_{j,b}t^{-v-a_j}E_{i,j}t^{s+v+a_j}\nonumber\\
&\quad-(-1)^{p(E_{j,b})p(E_{i,j})}\delta(b>i)\dfrac{\hbar^2}{2}\sum_{\substack{s,v\geq0}}\limits E_{j,i}t^{-s-v-a_j}E_{b,j}t^{v+a_j}E_{i,b}t^s\nonumber\\
&\quad-(-1)^{p(j)+p(b)}\delta(j>i)\dfrac{\hbar^2}{2}\sum_{\substack{s,v\geq0}}\limits E_{j,b}t^{-v-a_j}E_{b,i}t^{v+a_j-s}E_{i,j}t^s,\label{551-1-1}\\
&\quad-[\dfrac{\hbar}{2}\sum_{\substack{s\geq0\\i>u}}\limits (-1)^{p(i)+p(u)}E_{i,u}t^{-s}E_{u,i}t^s,P_j]\nonumber\\
&=(-1)^{p(i)+p(j)+p(E_{j,b})p(E_{i,b})}\delta(b<i)\dfrac{\hbar^2}{2}\sum_{\substack{s\geq0}}\limits E_{i,b}t^{-s}E_{j,i}t^{s-v-a_j}E_{b,j}t^{v+a_j}\nonumber\\
&\quad+(-1)^{p(i)+p(b)}\delta(i>j)\dfrac{\hbar^2}{2}\sum_{\substack{s,v\geq0}}\limits E_{i,j}t^{-s}E_{j,b}t^{-v-a_j}E_{b,i}t^{s+v+a_j}\nonumber\\
&\quad-(-1)^{p(i)+p(b)}\delta(i>j)\dfrac{\hbar^2}{2}\sum_{\substack{s,v\geq0}}\limits E_{i,b}t^{-s-v-a_j}E_{b,j}t^{v+a_j}E_{j,i}t^s\nonumber\\
&\quad-(-1)^{p(i)+p(j)+p(E_{j,b})p(E_{i,b})}\delta(b>i)\dfrac{\hbar^2}{2}\sum_{\substack{s\geq0}}\limits E_{j,b}t^{-v-a_j}E_{i,j}t^{v+a_j-s}E_{b,i}t^s,\label{551-2-1}\\
&\quad[\dfrac{\hbar}{2}\sum_{\substack{s\geq0\\u<i}}\limits E_{u,i}t^{-s-1}E_{i,u}t^{s+1},P_j]\nonumber\\
&=(-1)^{p(j)+p(b)}\delta(j<i)\dfrac{\hbar^2}{2}\sum_{\substack{s,v\geq0}}\limits E_{j,i}t^{-s-1}E_{i,b}t^{s-v+1-a_j}E_{b,j}t^{v+a_j}\nonumber\\
&\quad+(-1)^{p(E_{j,b})p(E_{i,j})}\delta(b<i)\dfrac{\hbar^2}{2}\sum_{\substack{s\geq0}}\limits E_{b,i}t^{-s-1}E_{j,b}t^{-v-a_j}E_{i,j}t^{s+v+a_j+1}\nonumber\\
&\quad-(-1)^{p(E_{j,b})p(E_{i,j})}\delta(b<i)\dfrac{\hbar^2}{2}\sum_{\substack{s\geq0}}\limits E_{j,i}t^{-s-v-a_j-1}E_{b,j}t^{v+a_j}E_{i,b}t^{s+1}\nonumber\\
&\quad-(-1)^{p(j)+p(b)}\delta(j<i)\dfrac{\hbar^2}{2}\sum_{\substack{s,v\geq0}}\limits E_{j,b}t^{-v-a_j}E_{b,i}t^{v-s+a_j-1}E_{i,j}t^{s+1},\label{551-3-1}\\
&\quad-[\dfrac{\hbar}{2}\sum_{\substack{s\geq0\\i<u}}\limits (-1)^{p(i)+p(u)}E_{i,u}t^{-s-1}E_{u,i}t^{s+1},P_j]\nonumber\\
&=(-1)^{p(i)+p(j)+p(E_{j,b})p(E_{i,b})}\delta(i<b)\dfrac{\hbar^2}{2}\sum_{\substack{s,v\geq0}}\limits E_{i,b}t^{-s-1}E_{j,i}t^{s-v-a_j+1}E_{b,j}t^{v+a_j}\nonumber\\
&\quad+(-1)^{p(i)+p(b)}\delta(i<j)\dfrac{\hbar^2}{2}\sum_{\substack{s,v\geq0}}\limits E_{i,j}t^{-s-1}E_{j,b}t^{-v-a_j}E_{b,i}t^{s+v+a_j+1}\nonumber\\
&\quad-(-1)^{p(i)+p(b)}\delta(i<j)\dfrac{\hbar^2}{2}\sum_{\substack{s\geq0}}\limits E_{i,b}t^{-s-v-a_j-1}E_{b,j}t^{v+a_j}E_{j,i}t^{s+1}\nonumber\\
&\quad-(-1)^{p(i)+p(j)+p(E_{j,b})p(E_{i,b})}\delta(i<b)\dfrac{\hbar^2}{2}\sum_{\substack{s,v\geq0}}\limits E_{j,b}t^{-v-a_j}E_{i,j}t^{v-s+a_j-1}E_{b,i}t^{s+1}.\label{551-4-1}
\end{align}
Here after, we denote $(\text{equation number})_{a,b}$ means that the value of $(\text{equation number})$ at $i=a,j=b$. Then, we can rewrite
\begin{align}
[A_i,P_j]-[A_j,P_i]&=\eqref{551-1-1}_{i,j}+\eqref{551-2-1}_{i,j}+\eqref{551-3-1}_{i,j}+\eqref{551-4-1}_{i,j}\nonumber\\
&\quad-\eqref{551-1-1}_{j,i}-\eqref{551-2-1}_{j,i}-\eqref{551-3-1}_{j,i}-\eqref{551-4-1}_{j,i}.\label{9113-1-1}
\end{align}
We prove the relation \eqref{concl-1} by dividing into three cases, that is, $i<j<b$, $i<b<j$ and $b<i<j$. We only show the case that $i<j<b$. The other cases can be proven in a similar way.

In this case, we note that $a_i=a_j=1$. By a direct computation, we can rewrite the sum of terms which contain $\delta(i<j)$ as follows:
\begin{align}
\eqref{551-1-1}_{i,j,1}-\eqref{551-2-1}_{j,i,2}&=(-1)^{p(j)+p(b)}\delta(j>i)\dfrac{\hbar^2}{2}\sum_{\substack{s,v\geq0}}\limits E_{j,i}t^{-s-v-1}E_{i,b}t^{s}E_{b,j}t^{v+1},\label{bryo-1-1}\\
\eqref{551-1-1}_{i,j,4}-\eqref{551-2-1}_{j,i,3}&=-(-1)^{p(j)+p(b)}\delta(j>i)\dfrac{\hbar^2}{2}\sum_{\substack{s,v\geq0}}\limits E_{j,b}t^{-v-1}E_{b,i}t^{-s}E_{i,j}t^{s+v+1},\label{bryo-1-2}\\
-\eqref{551-3-1}_{j,i,1}+\eqref{551-4-1}_{i,j,2}&=-(-1)^{p(i)+p(b)}\delta(i<j)\dfrac{\hbar^2}{2}\sum_{\substack{s,v\geq0}}\limits E_{i,j}t^{-s-v-1}E_{j,b}t^{s}E_{b,i}t^{v+1},\label{bryo-1-3}\\
-\eqref{551-3-1}_{j,i,4}+\eqref{551-4-1}_{i,j,3}&=(-1)^{p(i)+p(b)}\delta(i<j)\dfrac{\hbar^2}{2}\sum_{\substack{s,v\geq0}}\limits E_{i,b}t^{-v-1}E_{b,j}t^{-s}E_{j,i}t^{s+v+1}.\label{bryo-1-4}
\end{align}
By a direct computation, we obtain
\begin{align}
\eqref{bryo-1-1}+\eqref{551-1-1}_{i,j,3}&=(-1)^{p(j)+p(b)}\delta(j>i)\dfrac{\hbar^2}{2}\sum_{\substack{s\geq0}}\limits (s+1)E_{j,i}t^{-s-1}E_{i,j}t^{s+1},\label{miya}\\
\eqref{bryo-1-2}+\eqref{551-1-1}_{i,j,2}&=-(-1)^{p(j)+p(b)}\delta(j>i)\dfrac{\hbar^2}{2}\sum_{\substack{s\geq0}}\limits(s+1) E_{j,i}t^{-s-1}E_{i,j}t^{s+1}\\
\eqref{bryo-1-3}-\eqref{551-1-1}_{j,i,3}&=-(-1)^{p(i)+p(b)}\delta(i<j)\dfrac{\hbar^2}{2}\sum_{\substack{s\geq0}}\limits (s+1)E_{i,j}t^{-s-1}E_{j,i}t^{s+v+1}\\
\eqref{bryo-1-4}-\eqref{551-1-1}_{j,i,2}&=(-1)^{p(i)+p(b)}\delta(i<j)\dfrac{\hbar^2}{2}\sum_{\substack{s\geq0}}\limits (s+1)E_{i,j}t^{-s-1}E_{j,i}t^{s+1}.
\end{align}
and
\begin{align}
\eqref{551-4-1}_{i,j,4}-\eqref{551-4-1}_{j,i,1}+\eqref{551-0-1}_1&=0,\\
\eqref{551-4-1}_{i,j,1}-\eqref{551-4-1}_{j,i,4}+\eqref{551-0-1}_2&=0.\label{miya1}
\end{align}
By adding \eqref{miya}-\eqref{miya1}, we obtain \eqref{concl-1}.
\section{The second edge contraction of the affine super Yangian}
In this section, we do not identify $I$ with $\mathbb{Z}/(m+n)\mathbb{Z}$. In this subsection, the parity $p$ is the parity of $\widehat{\mathfrak{sl}}(m|n+1)$ not $\widehat{\mathfrak{sl}}(m|n)$.
\begin{Theorem}
There exists a homomorphism 
\begin{equation*}
\Psi_2\colon Y_{\hbar,\ve-\hbar}(\widehat{\mathfrak{sl}}(m|n))\to  \widetilde{Y}_{\hbar,\ve}(\widehat{\mathfrak{sl}}(m|n+1))
\end{equation*}
defined by
\begin{gather*}
\Psi_{2}(H_{i,0})=\begin{cases}
H_{i,0}&\text{ if }1\leq i\leq m-1,\\
H_{m,0}+H_{-1,0}&\text{ if }i=m,\\
H_{i-1,0}&\text{ if }-n+1\leq i\leq-1,\\
H_{-n-1,0}&\text{ if }i=-n,
\end{cases}\\
\Psi_{2}(X^+_{i,0})=\begin{cases}
E_{i,i+1}&\text{ if }1\leq i\leq m-1,\\
E_{m,-2}&\text{ if }i=m,\\
E_{i-1,i-2}&\text{ if }-n+1\leq i\leq-1,\\
E_{-n-1,1}t&\text{ if }i=-n,
\end{cases}\\ 
\Psi_{2}(X^-_{i,0})=\begin{cases}
E_{i+1,i}&\text{ if }1\leq i\leq m-1,\\
E_{-2,m}&\text{ if }i=m,\\
-E_{i-2,i-1}&\text{ if }-n+1\leq i\leq-1,\\
-E_{1,-n-1}t^{-1}&\text{ if }i=-n,
\end{cases}
\end{gather*}
and
\begin{align*}
\Psi_{2}(H_{i,1})&= 
\begin{cases}
H_{i,1}+\hbar\displaystyle\sum_{s \geq 0} \limits E_{-1,i}t^{-s} E_{i,-1}t^{s}-\hbar\displaystyle\sum_{s \geq 0} \limits E_{-1,i+1}t^{-s} E_{i+1,-1}t^{s}&\text{ if }1\leq i\leq m-1,\\
H_{m,1}+H_{-1,1}+\dfrac{\hbar}{2}H_{-1,0}+\hbar H_{-1,0}H_{m,0}\\
\quad-\hbar\displaystyle\sum_{s \geq 0} \limits E_{-1,-2}t^{-s-1} E_{-2,-1}t^{s+1}+\hbar\displaystyle\sum_{s \geq 0} \limits E_{-1,m}t^{-s}E_{m,-1}t^{s}&\text{ if }i=m,\\
H_{i-1,1}+\dfrac{\hbar}{2}H_{i-1,0}+\hbar\displaystyle\sum_{s \geq 0} \limits E_{-1,i-1}t^{-s-1} E_{i-1,-1}t^{s+1}\\
\quad-\hbar\displaystyle\sum_{s \geq 0} \limits E_{-1,i-2}t^{-s-1} E_{i-2,-1}t^{s+1}&\text{ if }1-n\leq i\leq -1,\\
H_{-n-1,1}+\hbar\displaystyle\sum_{s \geq 0} \limits E_{-1,-n-1}t^{-s-1} E_{-n-1,-1}t^{s+1}\\
\quad-\hbar\displaystyle\sum_{s \geq 0} \limits E_{-1,1}t^{-s} E_{1,-1}t^{s}&\text{ if }i=-n,\\
\end{cases}\\
\Psi_{2}(X^+_{i,1})&=\begin{cases}
X^+_{i,1}+\hbar\displaystyle\sum_{s \geq 0} \limits E_{-1,i+1}t^{-s} E_{i,-1}t^{s}&\text{ if }1\leq i\leq m-1,\\
[X^+_{m,1},X^+_{-1,0}]-\hbar\displaystyle\sum_{s \geq 0} \limits E_{-1,-2}t^{-s} E_{m,-1}t^{s}&\text{ if }i=m,\\
X^+_{i-1,1}+\dfrac{\hbar}{2}X^+_{i-1,0}-\hbar\displaystyle\sum_{s \geq 0} \limits E_{-1,i-2}t^{-s-1} E_{i-1,-1}t^{s+1}&\text{ if }1-n\leq i\leq -1,\\
X^+_{-n-1,1}+\hbar\displaystyle\sum_{s \geq 0} \limits E_{-1,1}t^{-s} E_{-n-1,-1}t^{s+1}&\text{ if }i=-n,
\end{cases}
\end{align*}
We define 
\begin{align*}
\Psi_2(X^-_{i,1})&=\omega\circ\Psi_2(X^+_{i,1}).
\end{align*}
\end{Theorem}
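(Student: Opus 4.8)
The plan is to follow verbatim the strategy used above for $\Psi_1$. Since $Y_{\hbar,\ve-\hbar}(\widehat{\mathfrak{sl}}(m|n))$ is presented by the generators and the relations \eqref{Eq2.1}--\eqref{Eq2.12} of Definition~\ref{Prop32} with $\ve$ replaced by $\ve-\hbar$, it suffices to check that the proposed images of $H_{i,r}$ and $X^{\pm}_{i,r}$ satisfy those relations inside $\widetilde{Y}_{\hbar,\ve}(\widehat{\mathfrak{sl}}(m|n+1))$. Compatibility with \eqref{Eq2.2}, \eqref{Eq2.4} and \eqref{Eq2.10}--\eqref{Eq2.12} is immediate, since these involve only the degree-zero generators and $\Psi_2(H_{i,0})$, $\Psi_2(X^{\pm}_{i,0})$ are the root vectors of $\widehat{\mathfrak{gl}}(m|n+1)$ obtained by relabelling the nodes of $\widehat{\mathfrak{sl}}(m|n)$ and collapsing the gap at the new node $-1$; one checks as for $\Psi_1$ that they obey the Chevalley and Serre relations. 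From the explicit formulas one also reads off
\begin{equation*}
\omega\circ\Psi_2(H_{i,r})=\Psi_2(H_{i,r}),\qquad \omega\circ\Psi_2(X^+_{i,r})=(-1)^{p(i)}\Psi_2(X^-_{i,r})\qquad(r=0,1),
\end{equation*}
so, exactly as before, it is enough to verify \eqref{Eq2.5}--\eqref{Eq2.9} for the sign $+$, while \eqref{Eq2.3} reduces via \eqref{Eq2.1} and \eqref{Eq2.5}--\eqref{Eq2.7} to the single relation $[\Psi_2(X^+_{i,1}),\Psi_2(X^-_{j,0})]=\delta_{i,j}\Psi_2(H_{i,1})$.

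Next I would verify \eqref{Eq2.3} in this reduced form together with \eqref{Eq2.5} and \eqref{Eq2.8} by a case analysis on the positions of $i$ and $j$ relative to the distinguished nodes $m$, $-1$, $-n$ and $-n-1$. In each case one expands the bracket of the images using the commutator relations of $\widehat{\mathfrak{gl}}(m|n+1)$ and the affine super Yangian relations \eqref{Eq2.1}--\eqref{Eq2.9} for the target: for the interior indices the relation collapses to the corresponding one in the target, while near a distinguished node the correction terms $\frac{\hbar}{2}\Psi_2(X^+_{i-1,0})$, $\frac{\hbar}{2}\Psi_2(H_{i-1,0})$ and the infinite sums $\hbar\sum_{s\geq0}E_{-1,k}t^{-s-a}E_{k,-1}t^{s+a}$ ($a\in\{0,1\}$) cancel the extra terms produced by the merged node $-1$. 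These computations are long but run in parallel to the ones displayed for $\Psi_1$; the genuinely new ingredients are the relabelling $i\mapsto i-1$ on the negative nodes and the fact that here one adds an odd, rather than an even, index, so that the Koszul signs $(-1)^{p(\cdot)p(\cdot)}$ attached to the root vectors of $\widehat{\mathfrak{gl}}(m|n+1)$ change around $-1$.

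The boundary relations \eqref{Eq2.6}, \eqref{Eq2.7} and \eqref{Eq2.9}, which involve $\ve$, are where the spectral shift has to be tracked. Starting from the source parameter $\ve-\hbar$ and the parity of $\widehat{\mathfrak{sl}}(m|n)$, one checks that applying $\Psi_2$ turns the relevant constant into the one prescribed by \eqref{Eq2.6}, \eqref{Eq2.7}, \eqref{Eq2.9} for the target at parameter $\ve$, for which $m-n$ becomes $m-n-1$; concretely the numerical identity
\begin{equation*}
\Big((\ve-\hbar)+\frac{m-n}{2}\hbar\Big)+\frac{\hbar}{2}=\ve+\frac{m-n-1}{2}\hbar
\end{equation*}
shows that the missing $\frac{\hbar}{2}$ is exactly the half-integer correction built into $\Psi_2(X^+_{i,1})$ near the affine node, and the remaining equalities follow from a direct $\widehat{\mathfrak{gl}}(m|n+1)$-computation of the commutators of the infinite sums, just as in the subsections on \eqref{Eq2.6}--\eqref{Eq2.9} above.

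Finally, the main part is compatibility with \eqref{Eq2.1} in the remaining case $(r,s)=(1,1)$, i.e.\ $[\Psi_2(H_{i,1}),\Psi_2(H_{j,1})]=0$. As for $\Psi_1$, using $[\widetilde{H}_{k,1},\widetilde{H}_{l,1}]=0$ in the target, writing $\Psi_2(\widetilde{H}_{i,1})$ as a sum of target elements $\widetilde{H}_{\ast,1}$ plus terms polynomial in the $H_{\ast,0}$ (which commute with every $\Psi_2(H_{k,1})$) plus an element $P_i$ of the shape introduced in the previous section for the value $b=m+1$ (the position of the new node $-1$), and invoking Lemma~\ref{J} to rewrite the cross terms $[\widetilde{H}_{\ast,1},P_j]$ as $[A_{\ast},P_j]$ up to known corrections, the vanishing reduces to the identity \eqref{concl-1} at $b=m+1$. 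That identity was already established there for every $1\le b\le m+n+1$ — precisely the generality announced for this purpose — so no new work is needed. I therefore expect the main obstacle to be not \eqref{concl-1} but the sign-and-parity bookkeeping in the boundary cases of \eqref{Eq2.6}--\eqref{Eq2.9} and in \eqref{Eq2.8} near the node $-1$, where the change $n\mapsto n+1$ flips several of the Koszul signs.
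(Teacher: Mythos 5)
Your outline matches the paper's strategy for most of the relations (the paper itself only writes out \eqref{Eq2.8} in the case $(i,j)=(m,-1)$ and \eqref{Eq2.1}, declaring the rest "similar to $\Psi_1$"), and your bookkeeping of the spectral shift $(\ve-\hbar)+\frac{m-n}{2}\hbar+\frac{\hbar}{2}=\ve+\frac{m-n-1}{2}\hbar$ is the right mechanism for \eqref{Eq2.6}, \eqref{Eq2.7} and \eqref{Eq2.9}. However, there is a genuine gap in your treatment of \eqref{Eq2.1}, which is the one place where real new work is required.

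You claim that the quadratic corrections in $\Psi_2(\widetilde H_{i,1})$ are "of the shape $P_i$ introduced in the previous section for the value $b=m+1$," so that the vanishing reduces to \eqref{concl-1}, already proved for all $1\le b\le m+n+1$, and hence "no new work is needed." This is not correct. The corrections appearing in $\Psi_2(H_{i,1})$ are of the form $\hbar\sum_{s\ge0}E_{-1,i}t^{-s-a_i}E_{i,-1}t^{s+a_i}$, i.e.\ with the new node $b$ in the \emph{first} slot of the \emph{first} factor and with $a_i=0$ for $i<b$, $a_i=1$ for $i>b$. This is the element called $Q_i$ in the paper, not $P_i$: in $P_i$ the factors occur in the opposite order ($E_{i,b}t^{-s-a_i}E_{b,i}t^{s+a_i}$) and the convention for $a_i$ is reversed ($a_i=1$ for $i<b$). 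Since the sum runs only over $s\ge0$, reordering the two factors does not carry one family into the other; $P_i$ and $Q_i$ are genuinely different elements of the completion, and the identity one must prove for $\Psi_2$ is $[A_i,Q_j]-[A_j,Q_i]-[Q_i,Q_j]=0$ (equation \eqref{concl-2}, note also the opposite sign on the quadratic term) rather than \eqref{concl-1}. The general-$b$ statement of \eqref{concl-1} is reserved in the paper for $\Psi_4$ (at $b=m+n+1$), while \eqref{concl-2} at general $b$ is what later serves $\Psi_3$ (at $b=1$); for $\Psi_2$ itself one needs \eqref{concl-2} at $b=m+1$, and the paper proves it by a separate, if structurally parallel, computation. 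To close the gap you must either carry out that computation or exhibit an explicit identity relating $Q_i$ to $P_i$ modulo terms whose contribution to \eqref{Eq2.1} you control; as written, your appeal to \eqref{concl-1} does not suffice.
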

The well-definedness of $\Psi_2$ can be proven in a similar way to $\Psi_1$. We only show the compatibility of $\Psi_2$ with \eqref{Eq2.8} and \eqref{Eq2.1}.
\subsection{Compatibility with \eqref{Eq2.8}}
We only show the case that $(i,j)=(m,-1)$. The other cases can be proven in a similar way. It is enough to show the $+$ case by the same reason as $\Psi_1$.
By the definition of $\Psi_2$, we have
\begin{align}
&\quad[\Psi_2(X^+_{m,1}),\Psi_2(X^+_{-1,0})]\nonumber\\
&=[[X^+_{m,1},X^+_{-1,0}],X^+_{-2,0}]-[\hbar\displaystyle\sum_{s \geq 0} \limits E_{-1,-2}t^{-s} E_{m,-1}t^{s},E_{-2,-3}]\nonumber\\
&=[[X^+_{m,1},X^+_{-1,0}],X^+_{-2,0}]-\hbar\displaystyle\sum_{s \geq 0} \limits E_{-1,-3}t^{-s} E_{m,-1}t^{s}\label{893}
\end{align}
and
\begin{align}
&\quad[\Psi_2(X^+_{m,0}),\Psi_2(X^+_{-1,1})]\nonumber\\
&=[[X^+_{m,0},X^+_{-1,0}],X^+_{-2,1}]+\dfrac{\hbar}{2}[E_{m,-2},E_{-2,-3}]-[E_{m,-2},\hbar\displaystyle\sum_{s \geq 0} \limits E_{-1,-3}t^{-s-1} E_{-2,-1}t^{s+1}]\nonumber\\
&=[[X^+_{m,0},X^+_{-1,0}],X^+_{-2,1}]+\dfrac{\hbar}{2}E_{m,-3}-\hbar\displaystyle\sum_{s \geq 0} \limits E_{-1,-3}t^{-s-1} E_{m,-1}t^{s+1}.\label{894}
\end{align}
By \eqref{Eq2.8} and \eqref{gather1}, we have
\begin{align}
&\quad\eqref{893}_1-\eqref{894}_1\nonumber\\
&=[X^+_{m,0},[X^+_{-1,1},X^+_{-2,0}]]+\dfrac{\hbar}{2}\{X^+_{m,0},[X^+_{-1,0}.X^+_{-2,0}]\}\nonumber\\
&\quad-[X^+_{m,0},[X^+_{-1,1},X^+_{-2,0}]]+\dfrac{\hbar}{2}\{[X^+_{-m,0},X^+_{-1,0}],X^+_{-2,0}\}\nonumber\\
&=\dfrac{\hbar}{2}\{X^+_{m,0},E_{-1,-3}\}+\dfrac{\hbar}{2}\{E_{m,-2},X^+_{-2,0}\}.\label{895}
\end{align}
By \eqref{893}-\eqref{895}, we obtain
\begin{align*}
&\quad[\Psi_2(X^+_{m,1}),\Psi_2(X^+_{-1,0})]-[\Psi_2(X^+_{m,0}),\Psi_2(X^+_{-1,1})]\\
&=\dfrac{\hbar}{2}\{E_{m,-2},X^+_{-2,0}\}-\dfrac{\hbar}{2}E_{m,-3}+\dfrac{\hbar}{2}\{X^+_{m,0},E_{-1,-3}\}-\hbar E_{-1,-3}E_{m,-1}\\
&=\dfrac{\hbar}{2}\{E_{m,-2},X^+_{-2,0}\}.
\end{align*}
Thus, we have shown the compatibility with \eqref{Eq2.8} in the case that $(i,j)=(m,-1)$.
\subsection{Compatibility with \eqref{Eq2.1}}
The compatibility with \eqref{Eq2.1} is obvious in the case that $(r,s)=(0,0),(1,0),(0,1)$. Thus, it is enough to show the case that $(r,s)=(1,1)$.

We take $1\leq b\leq m+n+1$ and set
\begin{align*}
a_i&=\begin{cases}
0\text{ if }1\leq i<b,\\
1\text{ if }b<i\leq m+n+1,
\end{cases}\quad
Q_i=\hbar\sum_{s \geq 0}\limits E_{b,i}t^{-s-a_i}E_{i,b}t^{s+a_i}\text{ for }i\neq b.
\end{align*}
By the definition of $Q_i$ and Lemma~\ref{J}, in order to show the compatibility of $\Psi_1$ with \eqref{Eq2.1}, it is enough to show the relation
\begin{equation}
[A_i,Q_j]-[A_j,Q_i]-[Q_i,Q_j]=0\label{concl-2}
\end{equation}
in the case that $b=m+1$. For the proof of the well-definedness of another edge contraction, we will show \eqref{concl-2} for $1\leq b\leq m+n+1$.

By a direct computation, we obtain
\begin{align}
[Q_i,Q_j]
&=\hbar^2\displaystyle\sum_{s,v \geq 0}\limits E_{b,i}t^{-s-a_i}E_{i,j}t^{s-v+a_i-a_j}E_{j,b}t^{v+a_j}\nonumber\\
&\quad-\hbar^2\displaystyle\sum_{s,v \geq 0}\limits E_{b,j}t^{-v-a_j}E_{i,j}t^{v-s+a_j-a_i}E_{i,b}t^{s+a_i}.\label{550-0}
\end{align}
By the definition of $A_i$, we can divide $[A_i,Q_j]$ into 4 piecies:
\begin{align}
[A_{i},Q_j]
&=[\dfrac{\hbar}{2}\sum_{\substack{s\geq0\\u>i}}\limits E_{u,i}t^{-s}E_{i,u}t^s,Q_j]-[\dfrac{\hbar}{2}\sum_{\substack{s\geq0\\i>u}}\limits (-1)^{p(E_{i,u})}E_{i,u}t^{-s}E_{u,i}t^s,Q_j]\nonumber\\
&\quad+[\dfrac{\hbar}{2}\sum_{\substack{s\geq0\\u<i}}\limits E_{u,i}t^{-s-1}E_{i,u}t^{s+1},Q_j]-[\dfrac{\hbar}{2}\sum_{\substack{s\geq0\\i<u}}\limits (-1)^{p(E_{i,u})}E_{i,u}t^{-s-1}E_{u,i}t^{s+1},Q_j].\label{9112}
\end{align}
We compute the right hand  side of \eqref{9112}. By a direct computation, we obtain
\begin{align}
\eqref{9112}_1
&=\dfrac{\hbar^2}{2}\delta(b>i)\sum_{\substack{s,v\geq0}}\limits E_{b,i}t^{-s}E_{i,j}t^{s-v-a_j}E_{j,b}t^{v+a_j}\nonumber\\
&\quad-\dfrac{\hbar^2}{2}\delta(j>i)\sum_{\substack{s,v\geq0}}\limits E_{b,i}t^{-s-v-a_j}E_{i,j}t^sE_{j,b}t^{v+a_j}\nonumber\\
&\quad+\dfrac{\hbar^2}{2}\delta(j>i)\sum_{\substack{s,v\geq0}}\limits E_{b,j}t^{-v-a_j}E_{j,i}t^{-s}E_{i,b}t^{s+v+a_j}\nonumber\\
&\quad-\dfrac{\hbar^2}{2}\delta(b>i)\sum_{\substack{s,v\geq0}}\limits E_{b,j}t^{-v-a_j}E_{j,i}t^{v-s+a_j}E_{i,b}t^s\label{550-1}\\
\eqref{9112}_2
&=\dfrac{\hbar^2}{2}\delta(i>j)\sum_{\substack{s\geq0}}\limits (-1)^{p(E_{i,j})+p(E_{i,j})p(E_{b,j})}E_{i,j}t^{-s}E_{b,i}t^{s-v-a_j}E_{j,b}t^{v+a_j}\nonumber\\
&\quad-\dfrac{\hbar^2}{2}\delta(i>b)\sum_{\substack{s\geq0}}\limits (-1)^{p(E_{i,b})+p(E_{i,b})p(E_{b,j})}E_{i,j}t^{-s-v-a_j}E_{b,i}t^sE_{j,b}t^{v+a_j}\nonumber\\
&\quad+\dfrac{\hbar^2}{2}\delta(i>b)\sum_{\substack{s\geq0}}\limits (-1)^{p(E_{i,b})+p(E_{i,b})p(E_{b,j})}E_{b,j}t^{-v-a_j}E_{i,b}t^{-s}E_{j,i}t^{s+v+a_j}\nonumber\\
&\quad-\dfrac{\hbar^2}{2}\delta(i>j)\sum_{\substack{s\geq0}}\limits (-1)^{p(E_{i,j})+p(E_{i,j})p(E_{b,j})}E_{b,j}t^{-v-a_j}E_{i,b}t^{v-s+a_j}E_{j,i}t^s\label{550-2}\\
\eqref{9112}_3
&=\dfrac{\hbar^2}{2}\delta(b<i)\sum_{\substack{s\geq0}}\limits E_{b,i}t^{-s-1}E_{i,j}t^{s-v-a_j+1}E_{j,b}t^{v+a_j}\nonumber\\
&\quad-\dfrac{\hbar^2}{2}\delta(j<i)\sum_{\substack{s\geq0}}\limits E_{b,i}t^{-s-v-a_j-1}E_{i,j}t^{s+1}E_{j,b}t^{v+a_j}\nonumber\\
&\quad+\dfrac{\hbar^2}{2}\delta(j<i)\sum_{\substack{s\geq0}}\limits E_{b,j}t^{-v-a_j}E_{j,i}t^{-s-1}E_{i,b}t^{s+v+a_j+1}\nonumber\\
&\quad-\dfrac{\hbar^2}{2}\delta(b<i)\sum_{\substack{s\geq0}}\limits E_{b,j}t^{-v-a_j}E_{j,i}t^{v-s+a_j-1}E_{i,b}t^{s+1}\label{550-3}\\
\eqref{9112}_4
&=\dfrac{\hbar^2}{2}\delta(i<j)\sum_{\substack{s\geq0}}\limits (-1)^{p(E_{i,j})+p(E_{i,j})p(E_{b,j})}E_{i,j}t^{-s-1}E_{b,i}t^{s-v+1-a_j}E_{j,b}t^{v+a_j}\nonumber\\
&\quad-\dfrac{\hbar^2}{2}\delta(i<b)\sum_{\substack{s\geq0}}\limits(-1)^{p(E_{i,b})+p(E_{i,b})p(E_{b,j})}E_{i,j}t^{-s-v-2}E_{b,i}t^{s+1}E_{j,b}t^{v+a_j}\nonumber\\
&\quad+\dfrac{\hbar^2}{2}\delta(i<b)\sum_{\substack{s\geq0}}\limits (-1)^{p(E_{i,b})+p(E_{i,b})p(E_{b,j})}E_{b,j}t^{-v-a_j}E_{i,b}t^{-s-1}E_{j,i}t^{s+v+a_j+1}\nonumber\\
&\quad-\dfrac{\hbar^2}{2}\delta(i<j)\sum_{\substack{s\geq0}}\limits(-1)^{p(E_{i,j})+p(E_{i,j})p(E_{b,j})}E_{b,j}t^{-v-a_j}E_{i,b}t^{v-s+a_j-1}E_{j,i}t^{s+1}\label{550-4}
\end{align}
Then, we find that
\begin{align*}
[A_i,Q_j]-[A_j,Q_i]&=\eqref{550-1}_{i,j}+\eqref{550-2}_{i,j}+\eqref{550-3}_{i,j}+\eqref{550-4}_{i,j}\\
&\quad-\eqref{550-1}_{j,i}-\eqref{550-2}_{j,i}-\eqref{550-3}_{j,i}-\eqref{550-4}_{j,i}.
\end{align*}
We divide the proof into three cases, that is, $i<j<b$, $i<b<j$ and $b<i<j$. We only show the case that $i<j<b$. The other cases can be proven in the same way.

By a direct computation, we can rewrite the following sum:
\begin{align}
\eqref{550-1}_{i,j,1}-\eqref{550-1}_{j,i,4}-\eqref{550-0}_1&=0,\label{miya3}\\
-\eqref{550-1}_{j,i,1}+\eqref{550-1}_{i,j,4}-\eqref{550-0}_2&=0,\\
\eqref{550-1}_{i,j,2}-\eqref{550-2}_{j,i,4}-\eqref{550-4}_{j,i,3}&=-\dfrac{\hbar^2}{2}\sum_{s\geq0}\limits (s+1)E_{b,i}t^{-s}E_{i,b}t^{s},\\
\eqref{550-1}_{i,j,3}-\eqref{550-2}_{j,i,1}-\eqref{550-4}_{j,i,2}&=\dfrac{\hbar^2}{2}\sum_{s\geq0}\limits (s+1)E_{b,i}t^{-s}E_{i,b}t^{s},\\
-\eqref{550-3}_{j,i,2}+\eqref{550-4}_{i,j,4}+\eqref{550-4}_{i,j,3}&=\dfrac{\hbar^2}{2}\sum_{s\geq0}\limits (s+1)E_{b,i}t^{-s-1}E_{i,b}t^{s+1},\\
-\eqref{550-3}_{j,i,3}+\eqref{550-4}_{i,j,1}+\eqref{550-4}_{i,j,2}&=-\dfrac{\hbar^2}{2}\sum_{s\geq0}\limits (s+1)E_{b,i}t^{-s-1}E_{i,b}t^{s+1}.\label{miya4}
\end{align}
By adding \eqref{miya3}-\eqref{miya4}, we obtain \eqref{concl-2}.
\section{The third edge contraction of the affine super Yangian}
In this section, we identify $I$ with $\mathbb{Z}/(m+n)\mathbb{Z}$. In this section, the parity $p$ is the parity of $\widehat{\mathfrak{sl}}(m+1|n)$ not $\widehat{\mathfrak{sl}}(m|n)$.
\begin{Theorem}
There exists a homomorphism 
\begin{equation*}
\Psi_3\colon Y_{\hbar,\ve+\hbar}(\widehat{\mathfrak{sl}}(m|n))\to  \widetilde{Y}_{\hbar,\ve}(\widehat{\mathfrak{sl}}(m+1|n))
\end{equation*}
defined by
\begin{gather*}
\Psi_{3}(H_{i,0})=\begin{cases}
H_{0,0}+H_{1,0}&\text{ if }i=0,\\
H_{i+1,0}&\text{ if }i\neq 0,
\end{cases}\\
\Psi_{3}(X^+_{i,0})=\begin{cases}
E_{m+n+1,2}t&\text{ if }i=0,\\
E_{i+1,i+2}&\text{ if }i\neq 0,
\end{cases}\ 
\Psi_{3}(X^-_{i,0})=\begin{cases}
-E_{2,m+n+1}t^{-1}&\text{ if }i=0,\\
(-1)^{p(i+1)}E_{i+2,i+1}&\text{ if }i\neq 0,
\end{cases}
\end{gather*}
and
\begin{align*}
\Psi_{3}(H_{i,1})&=
\begin{cases}
H_{i+1,1}+\hbar\displaystyle\sum_{s \geq 0} \limits E_{1,i+1}t^{-s-1} E_{i+1,1}t^{s+1}-\hbar\displaystyle\sum_{s \geq 0}\limits E_{1,i+2}t^{-s-1} E_{i+2,1}t^{s+1}\\
\qquad\qquad\qquad\qquad\qquad\qquad\qquad\qquad\qquad\qquad\text{ if }1\leq i\leq m+n-1,\\
H_{0,1}+H_{1,1}+\hbar H_{0,0}H_{1,0}+\dfrac{\hbar}{2}H_{0,0}\\
\quad-\hbar\displaystyle\sum_{s \geq 0} \limits E_{1,2}t^{-s-1}E_{2,1}t^{s+1}+\hbar\displaystyle\sum_{s \geq 0} \limits E_{1,m+n+1}t^{-s-1}E_{m+n+1,1}t^{s+1}\text{ if }i=0,
\end{cases}\\
\Psi_{2}(X^+_{i,1})&=
\begin{cases}
 X^+_{i+1,1}+(-1)^{p(i+1)+p(E_{i+1,1})p(E_{i+1,i+2})}\hbar\displaystyle\sum_{s \geq 0}\limits E_{1,i+2}t^{-s-1} E_{i+1,1}t^{s+1}\\
\qquad\qquad\qquad\qquad\qquad\qquad\qquad\qquad\qquad\qquad\text{ if }1\leq i\leq m+n-1,\\
[X^+_{0,0},X^+_{1,1}]+\hbar\displaystyle\sum_{s \geq 0} \limits E_{1,2}t^{-s-1}E_{m+n+1,1}t^{s+2}\text{ if }i=0,
\end{cases}
\end{align*}
We define $\Psi_3(X^-_{i,1})=\omega\circ\Psi_3(X^+_{i,1})$.
\end{Theorem}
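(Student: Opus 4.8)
As in the proofs for $\Psi_1$ and $\Psi_2$, the plan is to check that the displayed assignment is compatible with the defining relations \eqref{Eq2.1}--\eqref{Eq2.12} of $Y_{\hbar,\ve+\hbar}(\widehat{\mathfrak{sl}}(m|n))$. First one verifies that the formulas land in $\widetilde{Y}_{\hbar,\ve}(\widehat{\mathfrak{sl}}(m+1|n))$: every infinite sum occurring in $\Psi_3(H_{i,1})$ is homogeneous of degree $0$ for the loop grading, and the sum in $\Psi_3(X^+_{0,1})$ is homogeneous of degree $+1$ (each term has total $t$-degree $-s-1+s+2=1$), so all defining formulas converge in the degreewise completion. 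Compatibility with \eqref{Eq2.2}, \eqref{Eq2.4} and \eqref{Eq2.10}--\eqref{Eq2.12} is immediate, since on the degree-zero part $\Psi_3$ is the Lie-superalgebra edge contraction $\widehat{\mathfrak{sl}}(m|n)\to\widehat{\mathfrak{sl}}(m+1|n)$ sending $X^\pm_{i,0}$ for $i\neq 0$ to the shifted simple generators and $X^\pm_{0,0}$ to $[X^\pm_{0,0},X^\pm_{1,0}]$. Since $\Psi_3(X^-_{i,1})$ is defined as $\omega\circ\Psi_3(X^+_{i,1})$ and $\Psi_3$ intertwines the anti-automorphisms $\omega$ on source and target (this is checked directly on the degree-zero and $H$-generators, exactly as for $\Psi_1$), it suffices to verify \eqref{Eq2.5}--\eqref{Eq2.9} for the $+$ sign; and, as in the $\Psi_1$ argument, combining \eqref{Eq2.1} with \eqref{Eq2.5}--\eqref{Eq2.7} reduces compatibility with \eqref{Eq2.3} to the identity $[X^+_{i,1},X^-_{j,0}]=\delta_{i,j}H_{i,1}$.

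\textbf{The relations not involving the affine node.} For all pairs of indices $(i,j)$ with $i,j\neq 0$, i.e.\ for every relation in which no composite vector attached to the affine node occurs, the verification is entirely analogous to the corresponding computation done for $\Psi_1$, with the row/column index $1$ of $\mathfrak{gl}(m+1|n)$ here playing the part that $m+1$ played there: the $\frac{\hbar}{2}$-shifts and the infinite sums of type $\sum_{s\ge0}E_{1,k}t^{-s-1}E_{k,1}t^{s+1}$ (and minor variants) built into $\Psi_3$ are precisely the corrections that cancel the spurious mixed terms produced on expanding the left-hand sides of \eqref{Eq2.3}, \eqref{Eq2.5} and \eqref{Eq2.8} via \eqref{Eq2.2}--\eqref{Eq2.9} and \eqref{gather1} for $\widehat{\mathfrak{sl}}(m+1|n)$. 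Moreover, compatibility with \eqref{Eq2.1} is trivial except in the case $(r,s)=(1,1)$, and in that case, using Lemma~\ref{J} and the definition of $J(h_i)$, it reduces to the identity \eqref{concl-2} specialized to $b=1$; since \eqref{concl-2} was already established above for all $1\le b\le m+n+1$, this case needs no further work.

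\textbf{The affine node, and the main obstacle.} The genuinely new computation concerns the relations in which the affine node appears -- those between nodes $0$ and $1$ of the source -- where $\Psi_3(X^+_{0,0})=[X^+_{0,0},X^+_{1,0}]=E_{m+n+1,2}t$ is a composite root vector of $t$-degree $+1$ and $\Psi_3(X^+_{0,1})=[X^+_{0,0},X^+_{1,1}]+\hbar\sum_{s\ge0}E_{1,2}t^{-s-1}E_{m+n+1,1}t^{s+2}$. One expands the brackets $[\Psi_3(\widetilde{H}_{i,1}),\Psi_3(X^+_{0,0})]$, $[\Psi_3(\widetilde{H}_{0,1}),\Psi_3(X^+_{j,0})]$ and the degree-one Serre-type brackets occurring in \eqref{Eq2.8}--\eqref{Eq2.9}, in the spirit of the cases $(i,j)=(m-1,m),(-1,m)$ treated for $\Psi_1$; but one must now additionally track the degree-raising factor $t$ carried by $\Psi_3(X^+_{0,0})$ and the extra diagonal terms $\hbar H_{0,0}H_{1,0}+\frac{\hbar}{2}H_{0,0}$ present in $\Psi_3(H_{0,1})$. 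These are exactly the contributions that modify the coefficient of the spectral parameter: after all cancellations, the $\ve$-linear part of the right-hand sides of \eqref{Eq2.6}, \eqref{Eq2.7}, \eqref{Eq2.9} comes out with the constant appropriate to $\widehat{\mathfrak{sl}}(m+1|n)$ together with an extra $\hbar$, which is what forces the source to be $Y_{\hbar,\ve+\hbar}(\widehat{\mathfrak{sl}}(m|n))$ rather than $Y_{\hbar,\ve}(\widehat{\mathfrak{sl}}(m|n))$. \emph{The main obstacle} is precisely this affine-node bookkeeping: one has to organize the infinitely many completed terms so that, after applying \eqref{Eq2.2}--\eqref{Eq2.9} and \eqref{gather1} for the larger algebra, every contribution either telescopes away or reassembles into the asserted images of $X^\pm_{i,1}$ and $H_{i,1}$ -- a direct but lengthy computation, as in the $\Psi_1$ and $\Psi_2$ sections.
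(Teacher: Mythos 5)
Your proposal follows essentially the same route as the paper: reduce to checking the defining relations, use $\omega$ to halve the work, dispose of \eqref{Eq2.1} via the quadratic identity \eqref{concl-2} specialized to $b=1$ (already established for all $b$ in the $\Psi_2$ section), and verify the affine-node relations directly, which is where the shift $\ve\mapsto\ve+\hbar$ is forced. Two minor quibbles: the correction terms of $\Psi_3$ are patterned after $\Psi_2$ (they are the $Q_i$-type sums $E_{1,\ast}t^{\cdot}E_{\ast,1}t^{\cdot}$ with $b=1$, consistent with your own appeal to \eqref{concl-2}) rather than after $\Psi_1$ as you state, and the one computation the paper actually writes out --- compatibility with \eqref{Eq2.9}, which yields the coefficient $\ve+\tfrac{\hbar}{2}(m-n+2)=(\ve+\hbar)+\tfrac{\hbar}{2}(m-n)$ --- is only described, not executed, in your outline.
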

The well-definedness of $\Psi_3$ can be proven in a similar way to $\Psi_2$. We only show the compatibility with \eqref{Eq2.9} and \eqref{Eq2.1}.
\subsection{Compatibility of \eqref{Eq2.9}}
By the definition of $\Psi_3$, we have
\begin{align}
&\quad[ \Psi_3(X_{0, 0}^+),\Psi_3(X_{m+n-1, 1}^+)]\nonumber\\
&=[[X^+_{0,0},X^+_{1,0}],X^+_{m+n,1}]-[E_{m+n+1,2}t,\hbar\displaystyle\sum_{s \geq 0}\limits E_{1,m+n+1}t^{-s-1} E_{m+n,1}t^{s+1}]\nonumber\\
&=[[X^+_{0,0},X^+_{1,0}],X^+_{m+n,1}]-\hbar\displaystyle\sum_{s \geq 0}\limits E_{1,2}t^{-s} E_{m+n,1}t^{s+1}\label{kryo-1}
\end{align}
and
\begin{align}
&\quad[\Psi_3(X_{0, 1}^+), \Psi_3(X_{m+n-1, 0}^+)]\nonumber\\
&=[[X^+_{0,0},X^+_{1,1}],X^+_{m+n,0}]+[\hbar\displaystyle\sum_{s \geq 0} \limits E_{1,2}t^{-s-1}E_{m+n+1,1}t^{s+2},E_{m+n,m+n+1}]\nonumber\\
&=[[X^+_{0,0},X^+_{1,1}],X^+_{m+n,0}]-\hbar\displaystyle\sum_{s \geq 0} \limits E_{1,2}t^{-s-1}E_{m+n,1}t^{s+2}.\label{kryo-2}
\end{align}
By \eqref{gather1}, \eqref{Eq2.8} and \eqref{Eq2.9}, we obtain
\begin{align}
&\quad\eqref{kryo-1}_1\nonumber\\
&=[[X^+_{0,0},X^+_{m+n,1}],X^+_{1,0}]\nonumber\\
&=[[X^+_{0,1},X^+_{m+n,0}],X^+_{1,0}]-\dfrac{\hbar}{2}\{[X^+_{0,0},X^+_{1,0}],X^+_{m+n,0}\}-(\ve+\dfrac{\hbar}{2}(m-n+1))[[X^+_{0,0},X^+_{m+n,0}],X^+_{1,0}]\nonumber\\
&=[[X^+_{0,1},X^+_{m+n,0}],X^+_{1,0}]-\dfrac{\hbar}{2}\{E_{m+n+1,2}t,X^+_{m+n,0}\}+(\ve+\dfrac{\hbar}{2}(m-n+1))E_{m+n,2}t\label{kryo-3}
\end{align}
and
\begin{align}
\eqref{kryo-2}_1&=[[X^+_{0,1},X^+_{1,0}],X^+_{m+n,0}]+\dfrac{\hbar}{2}\{[X^+_{0,0},X^+_{m+n,0}],X^+_{1,0}\}\nonumber\\
&=[[X^+_{0,1},X^+_{m+n,0}],X^+_{1,0}]-\dfrac{\hbar}{2}\{E_{m+n,1}t,X^+_{1,0}\}.\label{kryo-4}
\end{align}
By applying \eqref{kryo-3} and \eqref{kryo-4} to \eqref{kryo-1} and \eqref{kryo-2}, we have
\begin{align*}
&\quad[\Psi_3(X_{0, 1}^{\pm}), \Psi_3(X_{m+n-1, 0}^{\pm})]-[ \Psi_3(X_{0, 0}^+),\Psi_3(X_{m+n-1, 1}^+)]\\
&=-\dfrac{\hbar}{2}\{E_{m+n,1}t,X^+_{1,0}\}+\dfrac{\hbar}{2}\{E_{m+n+1,2}t,X^+_{m+n,0}\}-(\ve+\dfrac{\hbar}{2}(m-n+1))E_{m+n,2}t+\hbar E_{1,2}E_{m+n,1}t\\
&=\dfrac{\hbar}{2}\{E_{m+n+1,2}t,X^+_{m+n,0}\}-(\ve+\dfrac{\hbar}{2}(m-n+2))E_{m+n,2}t.
\end{align*}
Thus, we have proven the compatibility with \eqref{Eq2.9}.
\subsection{Compatibility with \eqref{Eq2.1}}
By Lemma~\ref{J}, it is enough to show the relation \eqref{concl-2} for the case that $b=1$. Thus, we have already shown in the proof of the well-definedness of $\Psi_2$.
\section{The 4-th edge contraction of the affine super Yangian}
In this section, we identify $I$ with $\mathbb{Z}/(m+n)\mathbb{Z}$. In this section, the parity $p$ is the parity of $\widehat{\mathfrak{sl}}(m|n+1)$ not $\widehat{\mathfrak{sl}}(m|n)$.
\begin{Theorem}
There exists a homomorphism 
\begin{equation*}
\Psi_4\colon Y_{\hbar,\ve}(\widehat{\mathfrak{sl}}(m|n))\to  \widetilde{Y}_{\hbar,\ve}(\widehat{\mathfrak{sl}}(m|n+1))
\end{equation*}
defined by
\begin{gather*}
\Psi_{4}(H_{i,0})=\begin{cases}
H_{0,0}+H_{m+n,0}&\text{ if }i=0,\\
H_{i,0}&\text{ if }i\neq 0,
\end{cases}\\
\Psi_{4}(X^+_{i,0})=\begin{cases}
E_{m+n,1}t&\text{ if }i=0,\\
E_{i,i+1}&\text{ if }i\neq 0,
\end{cases}\ 
\Psi_{4}(X^-_{i,0})=\begin{cases}
-E_{1,m+n}t^{-1}&\text{ if }i=0,\\
(-1)^{p(i)}E_{i+1,i}&\text{ if }i\neq 0,
\end{cases}
\end{gather*}
and
\begin{align*}
\Psi_{4}(H_{i,1})&= 
\begin{cases}
H_{i,1}+(-1)^{p(i)}\hbar\displaystyle\sum_{s \geq 0} \limits E_{i,m+n+1}t^{-s-1} E_{m+n+1,i}t^{s+1}\\
\quad-(-1)^{p(i+1)}\hbar\displaystyle\sum_{s \geq 0}\limits E_{i+1,m+n+1}t^{-s-1} E_{m+n+1,i+1}t^{s+1}\text{ if }1\leq i\leq m+n-1,\\
H_{0,1}+H_{m+n,1}+(\ve+\dfrac{\hbar}{2}(m-n))H_{m+n,0}+\hbar H_{m+n,0}H_{0,0}\\
\quad-\hbar\displaystyle\sum_{s \geq 0} \limits E_{m+n,m+n+1}t^{-s-1} E_{m+n+1,n}t^{s+1}-\hbar\displaystyle\sum_{s \geq 0} \limits E_{1,m+n+1}t^{-s-1} E_{m+n+1,1}t^{s+1}\\
\qquad\qquad\qquad\qquad\qquad\qquad\qquad\qquad\qquad\qquad\qquad\qquad\qquad\text{ if }i=0,
\end{cases}\\
\Psi_{4}(X^+_{i,1})&=
\begin{cases}
X^+_{i,1}+\hbar\displaystyle\sum_{s \geq 0}\limits E_{i,m+n+1}t^{-s-1} E_{m+n+1,i+1}t^{s+1}&\text{ if }1\leq i\leq m+n-1,\\
[X^+_{m+n,0},X^+_{0,1}]+\hbar\displaystyle\sum_{s \geq 0} \limits E_{m+n,m+n+1}t^{-s} E_{m+n+1,1}t^{s+1}&\text{ if }i=0,
\end{cases}
\end{align*}
We define $\Psi_4(X^-_{i,1})=\omega\circ\Psi_4(X^+_{i,1})$.
\end{Theorem}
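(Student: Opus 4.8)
The plan is to verify that $\Psi_4$ respects each of the defining relations \eqref{Eq2.1}--\eqref{Eq2.12}, exactly in the style of the proofs of $\Psi_1$, $\Psi_2$ and $\Psi_3$. First I would observe that $\Psi_4$ sends the degree-zero generators $\{H_{i,0},X^\pm_{i,0}\}$ to a realization of the Chevalley generators of $\widehat{\mathfrak{sl}}(m|n)$ inside $\widetilde{Y}_{\hbar,\ve}(\widehat{\mathfrak{sl}}(m|n+1))$ by matrix units; the matrix-unit commutation relations then yield the compatibility with \eqref{Eq2.2}, \eqref{Eq2.4} and \eqref{Eq2.10}--\eqref{Eq2.12} at once, so these are immediate. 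Next, from the explicit formulas one records the symmetry $\omega\circ\Psi_4(H_{i,r})=\Psi_4(H_{i,r})$ and $\omega\circ\Psi_4(X^+_{i,r})=(-1)^{p(i)}\Psi_4(X^-_{i,r})$ for $r=0,1$. Since $\Psi_4(X^-_{i,1})$ is defined as $\omega\circ\Psi_4(X^+_{i,1})$, this reduces the signed relations \eqref{Eq2.5}--\eqref{Eq2.9} to their $+$ versions, precisely as was done for $\Psi_1$.

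It is then natural to separate the ``bulk'' relations from those touching the affine node $i=0$, which is the only place where $\Psi_4$ differs from a relabelled copy of $\Psi_1$. For indices away from $0$ and $m+n-1$, the expressions $\Psi_4(H_{i,1})$ and $\Psi_4(X^+_{i,1})$ have the same matrix-unit bilinear shape as those for $\Psi_1$, with the inserted index $m+n+1$ playing the role of $m+1$; hence the compatibility with \eqref{Eq2.3}, \eqref{Eq2.5} and \eqref{Eq2.8} in these cases follows from the same direct commutator computations already carried out in the $\Psi_1$ section, and I would reproduce only one or two representative cases. The genuinely new computations are those involving the affine node: the diagonal case of \eqref{Eq2.3} at $(i,j)=(0,0)$, the two $\ve$-shifted relations \eqref{Eq2.6} and \eqref{Eq2.7}, and the affine quartic relation \eqref{Eq2.9} at $(0,m+n-1)$. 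These I would check by expanding the commutators of $\Psi_4(X^+_{0,1})=[X^+_{m+n,0},X^+_{0,1}]+\hbar\sum_{s\geq0}E_{m+n,m+n+1}t^{-s}E_{m+n+1,1}t^{s+1}$ and of $\Psi_4(H_{0,1})$ against the relevant degree-zero images, using \eqref{Eq2.8}, \eqref{Eq2.9} and \eqref{gather1} to reorganize brackets, in complete parallel with the \eqref{Eq2.9} computation displayed for $\Psi_3$.

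The compatibility with \eqref{Eq2.1} is the cleanest step. For $(r,s)=(1,1)$ I would write each $\Psi_4(\widetilde{H}_{i,1})$ as $\widetilde{H}_{i,1}$ plus a correction of the form $P_i-P_{i+1}$, where $P_i$ is the bilinear attached to the inserted index $b=m+n+1$, and then invoke Lemma~\ref{J} to reduce $[\Psi_4(H_{i,1}),\Psi_4(H_{j,1})]=0$ to the identity \eqref{concl-1}. Since \eqref{concl-1} was already established for all $1\leq b\leq m+n+1$ in the proof of $\Psi_1$, the case $b=m+n+1$ needed here is available with no additional work.

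The main obstacle I expect is the affine relation \eqref{Eq2.9}. Here the parameter $\ve$ is \emph{unshifted} (unlike $\Psi_2$ and $\Psi_3$), so the affine shift $\tfrac{\hbar}{2}(m-n)$ demanded by the source must be reproduced in the target $\widehat{\mathfrak{sl}}(m|n+1)$, whose native shift is $\tfrac{\hbar}{2}(m-n-1)$; the missing $\tfrac{\hbar}{2}$ has to be supplied exactly by the explicit $(\ve+\tfrac{\hbar}{2}(m-n))H_{m+n,0}$ and $\tfrac{\hbar}{2}$ terms built into $\Psi_4(H_{0,1})$ and $\Psi_4(X^+_{0,1})$. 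Tracking these constants together with the odd-parity signs carried by the new index $m+n+1$, while verifying that the infinite matrix-unit sums telescope and cancel, is the delicate part of the argument and is where I would concentrate the effort.
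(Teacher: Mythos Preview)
Your proposal is correct and follows essentially the same route as the paper: reduce to the $+$ relations via $\omega$, observe that the bulk relations are handled by the $\Psi_1$-style computations, isolate \eqref{Eq2.9} at the affine node as the new explicit check, and reduce \eqref{Eq2.1} via Lemma~\ref{J} to the identity \eqref{concl-1} with $b=m+n+1$, which was already proved in the $\Psi_1$ section. The paper in fact displays only the \eqref{Eq2.9} and \eqref{Eq2.1} steps and declares the rest ``similar to $\Psi_1$'', so your outline is slightly more explicit than the paper's but matches it in substance.
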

The proof can be given in a similar way to $\Psi_1$. We only show the compatibility with \eqref{Eq2.9} and \eqref{Eq2.1}.
\subsection{Compatibility with \eqref{Eq2.9}}
By \eqref{gather1} and \eqref{Eq2.9}, we have
\begin{align}
&\quad[\Psi_4(X^+_{0,1}),\Psi_4(X^+_{m+n-1,0})]\nonumber\\
&=[[X^+_{m+n,0},X^+_{0,1}],X^+_{m+n-1,0}]+[\hbar\displaystyle\sum_{s \geq 0} \limits E_{m+n,m+n+1}t^{-s} E_{m+n+1,1}t^{s+1},E_{m+n-1,m+n}]\nonumber\\
&=[[X^+_{m+n,1},X^+_{0,0}],X^+_{m+n-1,0}]-\dfrac{\hbar}{2}[\{X^+_{m+n,0},X^+_{0,0}\},X^+_{m+n-1,0}]\nonumber\\
&\quad+(\ve+\dfrac{m-n-1}{2}\hbar)[[X^+_{m+n,0},X^+_{0,0}],X^+_{m+n-1,0}]-\hbar\displaystyle\sum_{s \geq 0} \limits E_{m+n-1,m+n+1}t^{-s} E_{m+n+1,1}t^{s+1}\nonumber\\
&=[[X^+_{m+n,1},X^+_{m+n-1,0}],X^+_{0,0}]+\dfrac{\hbar}{2}\{E_{m+n-1,m+n+1},X^+_{0,0}\}-(\ve+\dfrac{m-n-1}{2}\hbar)E_{m+n-1,1}t\nonumber\\
&\quad-\hbar\displaystyle\sum_{s \geq 0} \limits E_{m+n-1,m+n+1}t^{-s} E_{m+n+1,1}t^{s+1}.\label{hryo-1}
\end{align}
Similarly, by \eqref{gather1} and \eqref{Eq2.8}, we have
\begin{align}
&\quad[\Psi_4(X^+_{0,0}),\Psi_4(X^+_{m+n-1,1})]\nonumber\\
&=[[X^+_{m+n,0},X^+_{0,0}],X^+_{m+n-1,1}]+[E_{m+n,1}t,\hbar\displaystyle\sum_{s \geq 0}\limits E_{m+n-1,m+n+1}t^{-s-1} E_{m+n+1,m+n}t^{s+1}]\nonumber\\
&=[[X^+_{m+n,0},X^+_{m+n-1,1}],X^+_{0,0}]-\hbar\displaystyle\sum_{s \geq 0}\limits E_{m+n-1,m+n+1}t^{-s-1} E_{m+n+1,1}t^{s+2}.\label{hryo-2}
\end{align}
By \eqref{hryo-1} and \eqref{hryo-2}, we have
\begin{align*}
&\quad[\Psi_4(X^+_{0,1}),\Psi_4(X^+_{m+n-1,0})]-[\Psi_4(X^+_{0,0}),\Psi_4(X^+_{m+n-1,1})]\\
&=[[X^+_{m+n,1},X^+_{m+n-1,0}]-[X^+_{m+n,0},X^+_{m+n-1,1}],X^+_{0,0}]\\
&\quad+\dfrac{\hbar}{2}\{E_{m+n-1,m+n+1},X^+_{0,0}\}-(\ve+\dfrac{m-n-1}{2}\hbar)E_{m+n-1,1}t-\hbar E_{m+n-1,m+n+1}E_{m+n+1,1}t\\
&=\dfrac{\hbar}{2}\{[X^+_{m+n,0},X^+_{0,0}],X^+_{m+n-1,0}\}-(\ve+\dfrac{m-n}{2}\hbar)E_{m+n-1,1}t.
\end{align*}
Thus, we have proven the compatibility with \eqref{Eq2.9}.
\subsection{Compatibility with \eqref{Eq2.1}}
By Lemma~\ref{J}, it is enough to show the relation \eqref{concl-1} for the case that $b=m+n+1$. Thus, we have already shown in the proof of the well-definedness of $\Psi_1$.
\section{$W$-superalgebras of type $A$}

Let us set some notations of a vertex superalgebra. For a vertex superalgebra $V$, we denote the generating field associated with $v\in V$ by $v(z)=\displaystyle\sum_{s\in\mathbb{Z}}\limits v_{(s)}z^{-n-1}$. We also denote the OPE of $V$ by
\begin{equation*}
u(z)v(w)\sim\displaystyle\sum_{s\geq0}\limits \dfrac{(u_{(s)}v)(w)}{(z-w)^{s+1}}
\end{equation*}
for all $u, v\in V$. We denote the vacuum vector (resp.\ the translation operator) by $|0\rangle$ (resp.\ $\partial$).

We denote the universal affine vertex superalgebra associated with a finite dimensional Lie superalgebra $\mathfrak{g}$ and its inner product $\kappa$ by $V^\kappa(\mathfrak{g})$. By the PBW theorem, we can identify $V^\kappa(\mathfrak{g})$ with $U(t^{-1}\mathfrak{g}[t^{-1}])$. In order to simplify the notation, here after, we denote the generating field $(ut^{-1})(z)$ as $u(z)$. By the definition of $V^\kappa(\mathfrak{g})$, the generating fields $u(z)$ and $v(z)$ satisfy the OPE
\begin{gather}
u(z)v(w)\sim\dfrac{[u,v](w)}{z-w}+\dfrac{\kappa(u,v)}{(z-w)^2}\label{OPE1}
\end{gather}
for all $u,v\in\mathfrak{g}$. 

We take a positive integer and its partition:
\begin{align}
M&=\displaystyle\sum_{i=1}^lu_i,\qquad u_1\geq u_{2}\geq\cdots\geq u_l\geq0,\nonumber\\
N&=\displaystyle\sum_{i=1}^lq_i,\qquad q_1\geq q_{2}\geq\cdots\geq q_l\geq0,\label{cond:q}
\end{align}
satisfying that $M\neq N$ and $u_l+q_l\neq0$. We define a set
\begin{equation*}
I_{M|N}=\{1,2,\cdots,M,-1,-2,\cdots,-N\}.
\end{equation*}Let us set a parity of $i\in I_{M|N}$ as
\begin{equation*}
p(i)=\begin{cases}
0&\text{ if }i>0,\\
1&\text{ if }i<0.
\end{cases}
\end{equation*}
For $1\leq i\leq M$ and $-N\leq j\leq -1$, we set $1\leq \col(i),\col(j)\leq l$, $u_1-u_{\col(i)}<\row(i)\leq u_1$ and $-q_1\leq\row(j)<-q_1+q_{\col(j)}$ satisfying
\begin{gather*}
\sum_{b=1}^{\col(i)-1}u_b<i\leq\sum_{b=1}^{\col(i)}u_b,\ 
\row(i)=i-\sum_{b=1}^{\col(i)-1}u_b+u_1-u_{\col(i)},\\
\sum_{b=1}^{\col(j)-1}q_b<-j\leq\sum_{b=1}^{\col(j)}q_b,\ \row(j)=j+\sum_{b=1}^{\col(j)-1}q_b-q_1+q_{\col(j)}.
\end{gather*}
Let us set a Lie superalgebra $\mathfrak{gl}(M|N)=\bigoplus_{i,j\in I_{M|N}}\mathbb{C}e_{i,j}$ whose commutator relations are determined by
\begin{equation*}
[e_{i,j},e_{x,y}]=\delta_{j,x}e_{i,y}-(-1)^{p(e_{i,j})p(e_{x,y})}\delta_{i,y}e_{x,j},
\end{equation*}
where $p(e_{i,j})=p(i)+p(j)$.
We take a nilpotent element $f\in\mathfrak{gl}(M|N)$ as 
\begin{equation*}
f=\sum_{i\in I_{M|N}}\limits e_{\hat{i},i},
\end{equation*}
where the integer $\hat{i}\in I_{M|N}$ are determined by
\begin{gather*}
\col(\hat{i})=\col(i)+1,\ \row(\hat{i})=\row(i).
\end{gather*}
Similarly to $\hat{i}$, we set $\tilde{i}\in I_{M|N}$ as
\begin{gather*}
\col(\tilde{i})=\col(i)-1,\ \row(\tilde{i})=\row(i).
\end{gather*}
We also fix an inner product of the Lie superalgebra $\mathfrak{gl}(M|N)$ determined by
\begin{equation*}
(e_{i,j}|e_{x,y})=k\delta_{i,y}\delta_{x,j}(-1)^{p(i)}+\delta_{i,j}\delta_{x,y}(-1)^{p(i)+p(x)}.
\end{equation*}
We set two Lie superalgebra 
\begin{equation*}
\mathfrak{b}=\bigoplus_{\substack{i,j\in I_{M|N},\\\col(i)\geq\col(j)}}\mathbb{C}e_{i,j},\ \mathfrak{a}=\mathfrak{b}\oplus\bigoplus_{\substack{i,j\in I_{M|N},\\\col(i)>\col(j)}}\mathbb{C}\psi_{i,j},
\end{equation*}
whose commutator relations are defined by
\begin{gather*}
[e_{i,j},\psi_{x,y}]=\delta_{j,x}\psi_{i,y}-\delta_{i,y}(-1)^{p(e_{i,j})(p(e_{x,y})+1)}\psi_{x,j},\\
[\psi_{i,j},\psi_{x,y}]=\delta_{j,x}\psi_{i,y}-\delta_{i,y}(-1)^{(p(e_{i,j})+1)(p(e_{x,y})+1)}\psi_{x,j},
\end{gather*}
where the parity of $e_{i,j}$ is $p(i)+p(j)$ and the parity of $\psi_{i,j}$ is $p(i)+p(j)+1$. 
We also set an inner product on $\mathfrak{b}$ and $\mathfrak{a}$ by
\begin{equation*}
\kappa(e_{i,j},e_{p,q})=(e_{i,j}|e_{p,q}),\ \kappa(e_{i,j},\psi_{p,q})=\kappa(\psi_{i,j},\psi_{p,q})=0.
\end{equation*}
We denote the universal affine vertex superalgebras associated with $\mathfrak{b}$ and $\mathfrak{a}$ by $V^\kappa(\mathfrak{b})$ and $V^\kappa(\mathfrak{b})$. We also sometimes denote the elements $e_{i,j}t^{-s}\in V^\kappa(\mathfrak{b})\subset V^\kappa(\mathfrak{a})$ and $\psi_{i,j}t^{-s}\in V^\kappa(\mathfrak{a})$ by $e_{i,j}[-s]$ and $\psi_{i,j}[-s]$ respectively.
Let us define an odd differential $d_0 \colon V^{\kappa}(\mathfrak{b})\to V^{\kappa}(\mathfrak{a})$ determined by
\begin{gather}
d_01=0,\\
[d_0,\partial]=0,\label{ee5800}
\end{gather}
\begin{align}
d_0(e_{i,j}[-1]])
&=\sum_{\substack{\col(i)>\col(r)\geq\col(j)}}\limits (-1)^{p(e_{i,j})+p(e_{i,r})p(e_{r,j})}e_{r,j}[-1]\psi_{i,r}[-1]\nonumber\\
&\quad-\sum_{\substack{\col(j)<\col(r)\leq\col(i)}}\limits (-1)^{p(e_{i,r})p(e_{r,j})}\psi_{r,j}[-1]e_{i,r}[-1]\nonumber\\
&\quad+\delta(\col(i)>\col(j))(-1)^{p(i)}\alpha_{\col(i)}\psi_{i,j}[-2]+(-1)^{p(i)}\psi_{\hat{i},j}[-1]-(-1)^{p(i)}\psi_{i,\tilde{j}}[-1].\label{ee1}
\end{align}
By using Theorem 2.4 in \cite{KRW}, we can define the $W$-algebra $\mathcal{W}^k(\mathfrak{gl}(M|N),f)$ as follows.
\begin{Definition}\label{T125}
The $W$-algebra $\mathcal{W}^k(\mathfrak{gl}(M|N),f)$ is the vertex subalgebra of $V^\kappa(\mathfrak{b})$ defined by
\begin{equation*}
\mathcal{W}^k(\mathfrak{gl}(M|N),f)=\{y\in V^\kappa(\mathfrak{b})\mid d_0(y)=0\}.
\end{equation*}
\end{Definition}
We define the set
\begin{equation*}
I_s=\{1,\cdots,u_s,-1,\cdots,-q_s\}.
\end{equation*}
We construct two kinds of elements $W^{(1)}_{a,b}$ and $W^{(2)}_{a,b}$ for $a,b\in I_s\setminus I_{s+1}$. Let us set
\begin{gather*}
\alpha_s=k+M-N-u_s+q_s,\ \gamma_a=\sum_{s=a+1}^{l}\limits \alpha_{s}.
\end{gather*}

\begin{Theorem}\label{Generators1}
Let us set
\begin{align*}
W^{(1)}_{a,b}&=\sum_{\substack{\row(i)=a,\row(j)=b,\\\col(i)=\col(j)}}e_{i,j}[-1],\\
W^{(2)}_{a,b}&=\sum_{\substack{\col(i)=\col(j)+1\\\row(i)=a,\row(j)=b}}e_{i,j}[-1]-\sum_{\substack{\col(i)=\col(j)\\\row(i)=a,\row(j)=b}}\gamma_{\col(i)}e_{i,j}[-2]\\
&\quad+\sum_{\substack{\col(x)=\col(j)<\col(i)=\col(v)\\u_1-u_s<\row(x)=\row(v)\leq u_1 \\\row(i)=a,\row(j)=b}}\limits (-1)^{p(x)+p(e_{i,v})p(e_{x,j})}e_{x,j}[-1]e_{i,v}[-1]\\
&\quad+\sum_{\substack{\col(x)=\col(j)<\col(i)=\col(v)\\-q_1\leq\row(x)=\row(v)< -q_1+q_s\\\row(i)=a,\row(j)=b}}\limits (-1)^{p(x)+p(e_{i,v})p(e_{x,j})}e_{x,j}[-1]e_{i,v}[-1]\\
&\quad-\sum_{\substack{\col(x)=\col(j)\geq\col(i)=\col(v)\\q_s-q_1\leq\row(x)=\row(v)\leq q_{\col(j)}-q_1\\\row(i)=a,\row(j)=b}}\limits (-1)^{p(x)+p(e_{i,v})p(e_{x,j})}e_{x,j}[-1]e_{i,v}[-1]\\
&\quad-\sum_{\substack{\col(x)=\col(j)\geq\col(i)=\col(v)\\u_1-u_{\col(j)}\leq\row(x)=\row(v)\leq u_1-u_s\\\row(i)=a,\row(j)=b}}\limits (-1)^{p(x)+p(e_{i,v})p(e_{x,j})}e_{x,j}[-1]e_{i,v}[-1]
\end{align*}
for $a,b\in I_s\setminus I_{s+1}$.
Then, the $W$-superalgebra $\mathcal{W}^k(\mathfrak{gl}(M|N),f)$ contains $W^{(1)}_{a,b}$ and $W^{(2)}_{a,b}$.
\end{Theorem}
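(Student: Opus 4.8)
The plan is to argue directly from Definition~\ref{T125}: a vector $y\in V^\kappa(\mathfrak{b})$ lies in $\mathcal{W}^k(\mathfrak{gl}(M|N),f)$ exactly when $d_0(y)=0$. Since $W^{(1)}_{a,b}$ and $W^{(2)}_{a,b}$ manifestly involve only the generators $e_{i,j}[-s]$ they belong to $V^\kappa(\mathfrak{b})$, so it remains to check $d_0(W^{(1)}_{a,b})=0$ and $d_0(W^{(2)}_{a,b})=0$. I would compute both straight from \eqref{ee1}, using that $d_0$ is an odd derivation of every $n$-th product of $V^\kappa(\mathfrak{a})$ — in particular $d_0(:AB:)=:(d_0A)B:+(-1)^{p(A)}:A(d_0B):$ — together with $d_0|0\rangle=0$ and $[d_0,\partial]=0$ from \eqref{ee5800}; the latter gives $d_0(e_{i,j}[-2])=\partial\bigl(d_0(e_{i,j}[-1])\bigr)$, which is what is needed for the $[-2]$-terms of $W^{(2)}_{a,b}$.

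For $W^{(1)}_{a,b}=\sum e_{i,j}[-1]$ every summand has $\col(i)=\col(j)$, so in \eqref{ee1} both nested sums over intermediate columns are empty and the term $\delta(\col(i)>\col(j))(-1)^{p(i)}\alpha_{\col(i)}\psi_{i,j}[-2]$ vanishes; hence
\[
d_0\bigl(W^{(1)}_{a,b}\bigr)=\sum_{\substack{\col(i)=\col(j)\\\row(i)=a,\ \row(j)=b}}(-1)^{p(i)}\bigl(\psi_{\hat{i},j}[-1]-\psi_{i,\tilde{j}}[-1]\bigr).
\]
The substitution $i\mapsto\hat{i}$ in the first sum and $j\mapsto\tilde{j}$ in the second both preserve the parity and turn the constraint $\col(i)=\col(j)$ into $\col(\hat{i})=\col(j)+1$, resp.\ $\col(i)=\col(\tilde{j})+1$, so the two sums coincide and cancel. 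The extreme columns $\col(i)=l$ (no $\hat{i}$) and $\col(j)=1$ (no $\tilde{j}$) cause no trouble, the corresponding $\psi$'s being zero by convention.

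For $W^{(2)}_{a,b}$ I would apply $d_0$ to each of its six groups of summands via the derivation rule and organize the result in $V^\kappa(\mathfrak{a})\cong U(t^{-1}\mathfrak{a}[t^{-1}])$ by conformal weight and by the shape of the (unique, since $d_0$ is linear in $\psi$) $\psi$-factor. The weight-one part, consisting of the $(-1)^{p(i)}(\psi_{\hat{i},j}[-1]-\psi_{i,\tilde{j}}[-1])$ coming from $d_0$ of the first sum $\sum_{\col(i)=\col(j)+1}e_{i,j}[-1]$, telescopes to zero just as in the $W^{(1)}$ case. The weight-two part must be treated all at once: once everything is put into a common normal order the brackets of $\mathfrak{a}$ convert the mixed terms $:e\,\psi:$ into extra pure-$\psi[-2]$ contributions, so the pure-$\psi[-2]$ terms (from the $\alpha_{\col(i)}\psi_{i,j}[-2]$ part of $d_0$ on the first sum, from $\partial\circ d_0$ on the $-\gamma_{\col(i)}e_{i,j}[-2]$ sum, and from re-ordering) and the mixed $:e\,\psi:,\,:\psi\,e:$ terms (from the $\col(r)=\col(j)$ and $\col(r)=\col(i)$ summands of $d_0$ on the first sum, and from $d_0$ of the four quadratic sums, each producing $\psi_{\hat{x},j}-\psi_{x,\tilde{j}}$ and $\psi_{\hat{i},v}-\psi_{i,\tilde{v}}$ type terms) have to be shown to cancel jointly. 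The value $\gamma_a=\sum_{s=a+1}^{l}\alpha_s$ is forced by the matching of the $\psi[-2]$-coefficients, and the four row-ranges $u_1-u_s<\row(x)\le u_1$, $-q_1\le\row(x)<-q_1+q_s$, $q_s-q_1\le\row(x)\le q_{\col(j)}-q_1$, $u_1-u_{\col(j)}\le\row(x)\le u_1-u_s$ are precisely what makes the mixed terms telescope, leaving only boundary contributions that cancel with those from the first sum.

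The hard part is exactly this weight-two cancellation for $W^{(2)}_{a,b}$: one must track the index $\row(x)$ sweeping through the four prescribed ranges, the super-signs $(-1)^{p(x)+p(e_{i,v})p(e_{x,j})}$, the boundary conventions for $\hat{\cdot}$ and $\tilde{\cdot}$ at the first and last columns, and the $[-2]$-terms generated by normal ordering, and then see that the total collapses to $0$. It is a long but entirely mechanical computation once the signs and ranges are pinned down. (Alternatively, part of the bookkeeping could be offloaded onto the Miura map of \cite{Nak} by comparing the images of $W^{(1)}_{a,b}$, $W^{(2)}_{a,b}$ in ${\widehat{\bigotimes}}_{1\le s\le l}V^{\kappa_s}(\mathfrak{gl}(u_s|q_s))$, but the direct verification with $d_0$ is self-contained and is the route I would take.)
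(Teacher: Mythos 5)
Your strategy is exactly the paper's: reduce to $d_0(W^{(r)}_{a,b})=0$ via Definition~\ref{T125}, use the explicit formula \eqref{ee1} for $d_0$ on generators together with the derivation property and $[d_0,\partial]=0$, and let the $\psi_{\hat\imath,j}-\psi_{i,\tilde\jmath}$ contributions telescope. Your treatment of $W^{(1)}_{a,b}$ is complete and agrees with the paper (which disposes of it by the same observation, via \eqref{ee307}).

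For $W^{(2)}_{a,b}$, however, what you have written is a plan rather than a proof: the entire content of the theorem in that case is the ``long but entirely mechanical computation'' that you describe and then decline to carry out. The paper does carry it out --- it expands $d_0(W^{(2)}_{a,b})$ into ten groups of terms (its \eqref{ee3}), evaluates each via \eqref{ee307} and \eqref{ee2}, and exhibits the three explicit cancellations that make the total vanish --- and until you have done the analogous bookkeeping (tracking the four row-ranges, the signs $(-1)^{p(x)+p(e_{i,v})p(e_{x,j})}$, and the boundary behaviour of $\hat{\cdot}$, $\tilde{\cdot}$) you have not verified the claim; the specific coefficients $\gamma_a$ and the four ranges are exactly what is at stake. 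One further point where your anticipated structure is more complicated than necessary: if you keep every quadratic term in its given factor order (linear $e$- or $\psi$-factor on the side where $d_0$ produced it), all the mixed terms to be cancelled already appear with matching orderings, so no normal-ordering corrections arise at all; the pure $\psi[-2]$ terms then cancel cleanly between the $\alpha_{\col(i)}\psi_{i,j}[-2]$ contribution of the first sum and the $\partial\circ d_0$ of the $\gamma$-sum, using $\gamma_{\col(i)}-\gamma_{\col(\hat\imath)}=\alpha_{\col(\hat\imath)}$ (this is the paper's $\eqref{ee5}_3+\eqref{ee6}=0$). Your proposal to push everything into a common normal order would still work, but it manufactures extra $\psi[-2]$ debris on both sides that you would then also have to match, so I would advise against it when you complete the computation.
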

\begin{proof}
It is enough to show that $d_0(W^{(r)}_{a,b})=0$ for $r=1,2$ and $a,b\in I_s\setminus I_{s+1}$. We only show the case that $r=2$. The case that $r=1$ can be proven by the formula \eqref{ee307}.

By \eqref{ee1}, if $\col(i)=\col(j)$, we obtain
\begin{align}
[d_0,e_{i,j}[-1]]
&=(-1)^{p(i)}\psi_{\widehat{i},j}[-1]-(-1)^{p(i)}\psi_{i,\widetilde{j}}[-1].\label{ee307}
\end{align}
If $\col(i)=\col(j)+1$, by \eqref{ee1}, we also have
\begin{align}
&\quad[d_0,e_{i,j}[-1]]\nonumber\\
&=\sum_{\substack{\col(r)=\col(j)}}\limits (-1)^{p(e_{i,j})+p(e_{i,r})p(e_{r,j})}e_{r,j}[-1]\psi_{i,r}[-1]\nonumber\\
&\quad-\sum_{\substack{\col(r)=\col(i)}}\limits (-1)^{p(e_{i,r})p(e_{r,j})}\psi_{r,j}[-1]e_{i,r}[-1]\nonumber\\
&\quad+\alpha_{\col(i)}(-1)^{p(i)}\psi_{i,j}[-2]+(-1)^{p(i)}\psi_{\hat{i},j}[-1]-(-1)^{p(i)}\psi_{i,\tilde{j}}[-1].\label{ee2}
\end{align}
By the definition of $W^{(2)}_{i,j}$, we can rewrite $d_0(W^{(2)}_{p,q})$ as
\begin{align}
&\sum_{\substack{\col(i)=\col(j)+1\\\row(i)=a,\row(j)=b}}d_0(e_{i,j}[-1])-\sum_{\substack{\col(i)=\col(j)\\\row(i)=a,\row(j)=b}}\gamma_{\col(i)}d_0(e_{i,j}[-2])\nonumber\\
&\quad+\sum_{\substack{\col(x)=\col(j)<\col(i)=\col(v)\\\row(x)=\row(v)>u_1-u_s\\\row(i)=a,\row(j)=b}}\limits (-1)^{p(x)+p(e_{i,v})p(e_{x,j})}d_0(e_{x,j}[-1])e_{i,v}[-1]\nonumber\\
&\quad+\sum_{\substack{\col(x)=\col(j)<\col(i)=\col(v)\\\row(x)=\row(v)>u_1-u_s\\\row(i)=a,\row(j)=b}}\limits (-1)^{p(j)+p(e_{i,v})p(e_{x,j})}e_{x,j}[-1]d_0(e_{i,v}[-1])\nonumber\\
&\quad+\sum_{\substack{\col(x)=\col(j)<\col(i)=\col(v)\\\row(x)=\row(v)<-q_1+q_s\\\row(i)=a,\row(j)=b}}\limits (-1)^{p(x)+p(e_{i,v})p(e_{x,j})}d_0(e_{x,j}[-1])e_{i,v}[-1]\nonumber\\
&\quad+\sum_{\substack{\col(x)=\col(j)<\col(i)=\col(v)\\\row(x)=\row(v)<-q_1+q_s\\\row(i)=a,\row(j)=b}}\limits (-1)^{p(j)+p(e_{i,v})p(e_{x,j})}e_{x,j}[-1]d_0(e_{i,v}[-1])\nonumber\\
&\quad-\sum_{\substack{\col(x)=\col(j)\geq\col(i)=\col(v)\\q_s-q_1\leq\row(x)=\row(v)\leq q_{\col(j)}-q_1\\\row(i)=a,\row(j)=b}}\limits (-1)^{p(x)+p(e_{i,v})p(e_{x,j})}d_0(e_{x,j}[-1])e_{i,v}[-1]\nonumber\\
&\quad-\sum_{\substack{\col(x)=\col(j)\geq\col(i)=\col(v)\\q_s-q_1\leq\row(x)=\row(v)\leq q_{\col(j)}-q_1\\\row(i)=a,\row(j)=b}}\limits (-1)^{p(j)+p(e_{i,v})p(e_{x,j})}e_{x,j}[-1]d_0(e_{i,v}[-1])\nonumber\\
&\quad-\sum_{\substack{\col(x)=\col(j)\geq\col(i)=\col(v)\\u_1-u_{\col(j)}\leq\row(x)=\row(v)\leq u_1-u_s\\\row(i)=a,\row(j)=b}}\limits (-1)^{p(x)+p(e_{i,v})p(e_{x,j})}d_0(e_{x,j}[-1])e_{i,v}[-1]\nonumber\\
&\quad-\sum_{\substack{\col(x)=\col(j)\geq\col(i)=\col(v)\\u_1-u_{\col(j)}\leq\row(x)=\row(v)\leq u_1-u_s\\\row(i)=a,\row(j)=b}}\limits (-1)^{p(j)+p(e_{i,v})p(e_{x,j})}e_{x,j}[-1]d_0(e_{i,v}[-1]).\label{ee3}
\end{align}
By \eqref{ee2}, we obtain
\begin{align}
&\quad\eqref{ee3}_1\nonumber\\
&=\sum_{\substack{\col(i)=\col(j)+1\\\row(i)=a,\row(j)=b}}\sum_{\substack{\col(r)=\col(j)}}\limits (-1)^{p(e_{i,j})+p(e_{i,r})p(e_{r,j})}e_{r,j}[-1]\psi_{i,r}[-1]\nonumber\\
&\quad-\sum_{\substack{\col(i)=\col(j)+1\\\row(i)=a,\row(j)=b}}\sum_{\substack{\col(r)=\col(i)}}\limits (-1)^{p(e_{i,r})p(e_{r,j})}\psi_{r,j}[-1]e_{i,r}[-1]\nonumber\\
&\quad+\sum_{\substack{\col(i)=\col(j)+1\\\row(i)=a,\row(j)=b}}(-1)^{p(i)}\alpha_{\col(i)}\psi_{i,j}[-2]+\sum_{\substack{\col(i)=\col(j)+1\\\row(i)=a,\row(j)=b}}(-1)^{p(i)}(\psi_{\hat{i},j}[-1]-\psi_{i,\tilde{j}}[-1]).\label{ee5}
\end{align}
By a direct computation, the last term of the right hand side of \eqref{ee5} is equal to zero. Then, we have
\begin{align}
\eqref{ee3}_1
&=\sum_{\substack{\col(i)=\col(j)+1\\\row(i)=a,\row(j)=b}}\sum_{\substack{\col(r)=\col(j)}}\limits (-1)^{p(e_{i,j})+p(e_{i,r})p(e_{r,j})}e_{r,j}[-1]\psi_{i,r}[-1]\nonumber\\
&\quad-\sum_{\substack{\col(i)=\col(j)+1\\\row(i)=a,\row(j)=b}}\sum_{\substack{\col(r)=\col(i)}}\limits (-1)^{p(e_{i,r})p(e_{r,j})}\psi_{r,j}[-1]e_{i,r}[-1]\nonumber\\
&\quad+\sum_{\substack{\col(i)=\col(j)+1\\\row(i)=a,\row(j)=b}}(-1)^{p(i)}\alpha_{\col(i)}\psi_{i,j}[-2].\label{ee5.1}
\end{align}

By \eqref{ee307} and \eqref{ee5800}, we obtain
\begin{align}
\eqref{ee3}_2
&=-\sum_{\substack{\col(i)=\col(j)\\\row(i)=a,\row(j)=b}}(-1)^{p(i)}\gamma_{\col(i)}(\psi_{\widehat{i},j}[-2]-\psi_{i,\widetilde{j}}[-2])\nonumber\\
&=\sum_{\substack{\col(i)=\col(j)\\\row(i)=a,\row(j)=b}}(-1)^{p(i)}(\gamma_{\col(\hat{i})}-\gamma_{\col(i)})\psi_{\widehat{i},j}[-2]\nonumber\\
&=-\sum_{\substack{\col(i)=\col(j)\\\row(i)=a,\row(j)=b}}\limits(-1)^{p(i)}\alpha_{\col(\hat{i})}\psi_{\hat{i},j}[-2].\label{ee6}
\end{align}
By \eqref{ee307}, we obtain
\begin{align}
&\quad\eqref{ee3}_3\nonumber\\
&=\sum_{\substack{\col(x)=\col(j)<\col(i)=\col(v)\\\row(x)=\row(v)>u_1-u_s\\\row(i)=a,\row(j)=b}}\limits (-1)^{p(x)+p(e_{i,v})p(e_{x,j})+p(x)}(\psi_{\hat{x},j}[-1]-\psi_{x,\tilde{j}}[-1])e_{i,v}[-1]\nonumber\\
&=\sum_{\substack{\col(x)+1=\col(j)+1=\col(i)=\col(v)\\\row(x)=\row(v)>u_1-u_s\\\row(i)=a,\row(j)=b}}\limits (-1)^{p(e_{i,v})p(e_{x,j})}\psi_{\hat{x},j}[-1]e_{i,v}[-1]\label{ee7},\\
&\quad\eqref{ee3}_4\nonumber\\
&=\sum_{\substack{\col(x)=\col(j)<\col(i)=\col(v)\\\row(x)=\row(v)>u_1-u_s\\\row(i)=a,\row(j)=b}}\limits (-1)^{p(j)+p(e_{i,v})p(e_{x,j})+p(i)}e_{x,j}[-1](\psi_{\hat{i},v}[-1]-\psi_{i,\tilde{v}}[-1])\nonumber\\
&=-\sum_{\substack{\col(x)+1=\col(j)+1=\col(i)=\col(v)\\\row(x)=\row(v)>u_1-u_s\\\row(i)=a,\row(j)=b}}\limits (-1)^{p(i)+p(j)+p(e_{i,v})p(e_{x,j})}e_{x,j}[-1]\psi_{i,\tilde{v}}[-1],\label{ee8}\\
&\quad\eqref{ee3}_5\nonumber\\
&=\sum_{\substack{\col(x)=\col(j)<\col(i)=\col(v)\\\row(x)=\row(v)<-q_1+q_s\\\row(i)=a,\row(j)=b}}\limits (-1)^{p(x)+p(e_{i,v})p(e_{x,j})+p(x)}(\psi_{\hat{x},j}[-1]-\psi_{x,\tilde{j}}[-1])e_{i,v}[-1]\nonumber\\
&=\sum_{\substack{\col(x)+1=\col(j)+1=\col(i)=\col(v)\\\row(x)=\row(v)<-q_1+q_s\\\row(i)=a,\row(j)=b}}\limits (-1)^{p(e_{i,v})p(e_{x,j})}\psi_{\hat{x},j}[-1]e_{i,v}[-1]\label{ee7-1},\\
&\quad\eqref{ee3}_6\nonumber\\
&=\sum_{\substack{\col(x)=\col(j)<\col(i)=\col(v)\\\row(x)=\row(v)< -q_1+q_s\\\row(i)=a,\row(j)=b}}\limits (-1)^{p(j)+p(e_{i,v})p(e_{x,j})+p(i)}e_{x,j}[-1](\psi_{\hat{i},v}[-1]-\psi_{i,\tilde{v}}[-1])\nonumber\\
&=-\sum_{\substack{\col(x)+1=\col(j)+1=\col(i)=\col(v)\\\row(x)=\row(v)<-q_1+q_s\\\row(i)=a,\row(j)=b}}\limits (-1)^{p(i)+p(j)+p(e_{i,v})p(e_{x,j})}e_{x,j}[-1]\psi_{i,\tilde{v}}[-1],\label{ee8-1}\\
&\quad\eqref{ee3}_7\nonumber\\
&=-\sum_{\substack{\col(x)=\col(j)\geq\col(i)=\col(v)\\q_s-q_1\leq\row(x)=\row(v)\leq q_{\col(j)}-q_1\\\row(i)=a,\row(j)=b}}\limits (-1)^{p(x)+p(e_{i,v})p(e_{x,j})+p(x)}(\psi_{\hat{x},j}[-1]-\psi_{u,\tilde{j}}[-1])e_{i,v}[-1]\nonumber\\
&=\sum_{\substack{\col(x)=\col(j)=\col(i)+1=\col(v)+1\\q_s-q_1\leq\row(x)=\row(v)\leq q_{\col(j)}-q_1\\\row(i)=a,\row(j)=b}}\limits(-1)^{p(e_{i,v})p(e_{x,j})}\psi_{x,\tilde{j}}[-1]e_{i,v}[-1],\label{ee9}\\
&\quad\eqref{ee3}_8\nonumber\\
&=-\sum_{\substack{\col(x)=\col(j)\geq\col(i)=\col(v)\\q_s-q_1\leq\row(x)=\row(v)\leq q_{\col(j)}-q_1\\\row(i)=a,\row(j)=b}}\limits(-1)^{p(j)+p(e_{i,v})p(e_{x,j})+p(i)} e_{x,j}[-1](\psi_{\hat{i},v}[-1]-\psi_{i,\tilde{v}}[-1])\nonumber\\
&=-\sum_{\substack{\col(x)=\col(j)=\col(i)+1=\col(v)+1\\q_s-q_1\leq\row(x)=\row(v)\leq q_{\col(j)}-q_1\\\row(i)=a,\row(j)=b}}\limits(-1)^{p(i)+p(j)+p(e_{i,v})p(e_{x,j})} e_{x,j}[-1]\psi_{\hat{i},v}[-1],\label{ee10}\\
&\quad\eqref{ee3}_9\nonumber\\
&=-\sum_{\substack{\col(x)=\col(j)\geq\col(i)=\col(v)\\u_1-u_{\col(j)}\leq\row(x)=\row(v)\leq u_1-u_s\\\row(i)=a,\row(j)=b}}\limits (-1)^{p(x)+p(e_{i,v})p(e_{x,j})+p(x)}(\psi_{\hat{x},j}[-1]-\psi_{u,\tilde{j}}[-1])e_{i,v}[-1]\nonumber\\
&=\sum_{\substack{\col(x)=\col(j)=\col(i)+1=\col(v)+1\\u_1-u_{\col(j)}\leq\row(u)=\row(v)\leq u_1-u_s\\\row(i)=a,\row(j)=b}}\limits(-1)^{p(e_{i,v})p(e_{x,j})}\psi_{x,\tilde{j}}[-1]e_{i,v}[-1],\label{ee11}\\
&\quad\eqref{ee3}_{10}\nonumber\\
&=-\sum_{\substack{\col(x)=\col(j)\geq\col(i)=\col(v)\\u_1-u_{\col(j)}\leq\row(x)=\row(v)\leq u_1-u_s\\\row(i)=a,\row(j)=b}}\limits(-1)^{p(j)+p(e_{i,v})p(e_{x,j})+p(i)} e_{x,j}[-1](\psi_{\hat{i},v}[-1]-\psi_{i,\tilde{v}}[-1])\nonumber\\
&=-\sum_{\substack{\col(x)=\col(j)=\col(i)+1=\col(v)+1\\u_1-u_{\col(j)}\leq\row(x)=\row(v)\leq u_1-u_s\\\row(i)=a,\row(j)=b}}\limits(-1)^{p(i)+p(j)+p(e_{i,v})p(e_{x,j})} e_{x,j}[-1]\psi_{\hat{i},v}[-1].\label{ee12}
\end{align}
By a direct computation, we obtain
\begin{gather*}
\eqref{ee5}_1+\eqref{ee8}+\eqref{ee8-1}+\eqref{ee10}+\eqref{ee12}=0,\\
\eqref{ee5}_2+\eqref{ee7}+\eqref{ee7-1}+\eqref{ee9}+\eqref{ee11}=0,\\
\eqref{ee5}_3+\eqref{ee6}=0.
\end{gather*}
Then, by adding \eqref{ee5}-\eqref{ee10}, we obtain $d_0(W^{(2)}_{i,j})=0$.
\end{proof}
Then, by Theorem 5.2 in \cite{Genra} and Theorem 14 in \cite{Nak}, there exists an embedding
\begin{equation*}
\mu\colon\mathcal{W}^k(\mathfrak{gl}(M|N),f)\to\bigotimes_{1\leq s\leq l}V^{\kappa_s}(\mathfrak{gl}(u_s|q_s)).
\end{equation*}
This embedding is called the Miura map. 
\begin{Theorem}\label{Generators}
For integers $a,b\in I_s\setminus I_{s+1}$, the following elements of $\bigotimes_{1\leq s\leq l}\limits V^{\kappa_s}(\mathfrak{gl}(q_s))$ are contained in $\mu(\mathcal{W}^k(\mathfrak{gl}(M|N),f))$:
\begin{align*}
\mu(W^{(1)}_{a,b})&=\sum_{\substack{\row(i)=a,\row(j)=b,\\\col(i)=\col(j)}}e_{i,j}[-1],\\
\mu(W^{(2)}_{a,b})&=-\sum_{\substack{\col(i)=\col(j)\\\row(i)=a,\row(j)=b}}\gamma_{\col(i)}e_{i,j}[-2]\\
&\quad+\sum_{\substack{\col(x)=\col(j)<\col(i)=\col(v)\\\row(x)=\row(v)> u_1-u_s\\\row(i)=a,\row(j)=b}}\limits (-1)^{p(x)+p(e_{i,v})p(e_{x,j})}e_{x,j}[-1]e_{i,v}[-1]\\
&\quad+\sum_{\substack{\col(x)=\col(j)<\col(i)=\col(v)\\\row(x)=\row(v)< -q_1+q_s\\\row(i)=a,\row(j)=b}}\limits (-1)^{p(x)+p(e_{i,v})p(e_{x,j})}e_{x,j}[-1]e_{i,v}[-1]\\
&\quad-\sum_{\substack{\col(x)=\col(j)\geq\col(i)=\col(v)\\q_s-q_1\leq\row(x)=\row(v)\leq q_{\col(j)}-q_1\\\row(i)=a,\row(j)=b}}\limits (-1)^{p(x)+p(e_{i,v})p(e_{x,j})}e_{x,j}[-1]e_{i,v}[-1]\\
&\quad-\sum_{\substack{\col(x)=\col(j)\geq\col(i)=\col(v)\\u_1-u_{\col(j)}\leq\row(x)=\row(v)\leq u_1-u_s\\\row(i)=a,\row(j)=b}}\limits (-1)^{p(x)+p(e_{i,v})p(e_{x,j})}e_{x,j}[-1]e_{i,v}[-1].
\end{align*}
\end{Theorem}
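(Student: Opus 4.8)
The plan is to invoke the explicit description of the Miura map $\mu$ coming from the constructions of Genra \cite{Genra} and Nakatsuka \cite{Nak}, and then to evaluate it term by term on the elements produced by Theorem~\ref{Generators1}. First I record the structural input: with respect to the grading in which $e_{i,j}$ has degree $\col(j)-\col(i)$, the Lie superalgebra $\mathfrak{b}=\bigoplus_{\col(i)\geq\col(j)}\mathbb{C}e_{i,j}$ is the non-positive part, $f=\sum e_{\hat i,i}$ lies in degree $-1$, the nilradical $\mathfrak{n}=\bigoplus_{\col(i)>\col(j)}\mathbb{C}e_{i,j}$ is an ideal of $\mathfrak{b}$ (immediate from the commutator relations), and $\mathfrak{b}/\mathfrak{n}$ is the Levi $\mathfrak{l}=\bigoplus_{\col(i)=\col(j)}\mathbb{C}e_{i,j}=\bigoplus_{1\leq s\leq l}\mathfrak{gl}(u_s|q_s)$. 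For the form fixed above one has $\kappa(\mathfrak{n},\mathfrak{b})=0$: the supertrace term of $(e_{i,j}\,|\,e_{x,y})$ is nonzero only for $e_{x,y}=e_{j,i}$, which does not lie in $\mathfrak{b}$ when $\col(i)>\col(j)$, while the remaining term vanishes because $i\neq j$ whenever $\col(i)>\col(j)$. Hence the surjection $\mathfrak{b}\twoheadrightarrow\mathfrak{l}$ induces a surjective vertex superalgebra homomorphism $\pi\colon V^\kappa(\mathfrak{b})\to\bigotimes_{1\leq s\leq l}V^{\kappa_s}(\mathfrak{gl}(u_s|q_s))$ with $\pi(e_{i,j}[-r])=0$ for $\col(i)>\col(j)$ and $\pi(e_{i,j}[-r])=e_{i,j}[-r]$ for $\col(i)=\col(j)$ (all $r\geq1$), and $\mu$ is the restriction of $\pi$ to $\mathcal{W}^k(\mathfrak{gl}(M|N),f)\subset V^\kappa(\mathfrak{b})$. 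Since $W^{(1)}_{a,b},W^{(2)}_{a,b}\in\mathcal{W}^k(\mathfrak{gl}(M|N),f)$ by Theorem~\ref{Generators1}, it remains to compute $\pi(W^{(1)}_{a,b})$ and $\pi(W^{(2)}_{a,b})$.

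Because $\pi$ is a vertex superalgebra homomorphism it commutes with $\partial$ and sends $a_{(-1)}b$ to $\pi(a)_{(-1)}\pi(b)$, so I apply it summand by summand. Every summand of $W^{(1)}_{a,b}=\sum_{\row(i)=a,\row(j)=b,\col(i)=\col(j)}e_{i,j}[-1]$ is supported on $\mathfrak{l}$, so it is fixed, giving $\mu(W^{(1)}_{a,b})=W^{(1)}_{a,b}$. In $W^{(2)}_{a,b}$, the linear term $\sum_{\col(i)=\col(j)+1}e_{i,j}[-1]$ is supported on $\mathfrak{n}$ (each pair has $\col(i)>\col(j)$), hence is annihilated by $\pi$; the term $-\sum_{\col(i)=\col(j)}\gamma_{\col(i)}e_{i,j}[-2]$ already lies in $\bigotimes_s V^{\kappa_s}(\mathfrak{gl}(u_s|q_s))$; and in each of the four quadratic sums the summation conditions force $\col(x)=\col(j)$ and $\col(i)=\col(v)$, so both factors of $e_{x,j}[-1]e_{i,v}[-1]$ are supported on $\mathfrak{l}$ and $\pi(e_{x,j}[-1]e_{i,v}[-1])=e_{x,j}[-1]e_{i,v}[-1]$, with the sign prefactors unchanged. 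Thus $\mu(W^{(2)}_{a,b})$ is $W^{(2)}_{a,b}$ with its unique $\mathfrak{n}$-supported linear term deleted, which is exactly the displayed expression (the range $u_1-u_s<\row(x)=\row(v)\leq u_1$ coincides with $\row(x)=\row(v)>u_1-u_s$, since $\row$ of a positive index never exceeds $u_1$).

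The only genuine work is justifying the explicit form of $\mu$ used above, namely that $\mu$ is the restriction to $\mathcal{W}^k$ of the naive quotient $\pi$ killing the nilradical, with the target carrying precisely the shifted levels $\kappa_s$ encoded through $\alpha_s$ and $\gamma_a$, and that no anomalous corrections (extra $\partial(-)$ or scalar terms) are introduced when $\pi$ is applied to the depth-two generator $e_{i,j}[-2]$ or to the quadratic monomials. This is where Theorem 5.2 of \cite{Genra} and Theorem 14 of \cite{Nak} enter: one must match their normalization of the Miura map with the presentation of $\mathcal{W}^k(\mathfrak{gl}(M|N),f)$ via $d_0$ used in Definition~\ref{T125}. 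Once this identification is in place, the termwise bookkeeping above completes the proof with no further computation.
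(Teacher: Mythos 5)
Your proposal is correct and follows exactly the route the paper intends: the paper states Theorem~\ref{Generators} without a written proof, treating it as the immediate result of applying the Miura map of Genra and Nakatsuka---namely the projection killing the nilradical $\bigoplus_{\col(i)>\col(j)}\mathbb{C}e_{i,j}$ and landing in the Levi factors at shifted levels---to the explicit elements $W^{(1)}_{a,b}$ and $W^{(2)}_{a,b}$ of Theorem~\ref{Generators1}. Your termwise bookkeeping (only the linear term with $\col(i)=\col(j)+1$ is annihilated, everything else is supported on the Levi and survives verbatim) is precisely the intended computation, and your explicit check that $\kappa(\mathfrak{n},\mathfrak{b})=0$ is a detail the paper leaves implicit.
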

Let us recall the definition of the universal enveloping algebras of vertex superalgebras.
For any vertex superalgebra $V$, let $L(V)$ be the Borchards Lie algebra, that is,
\begin{align}
 L(V)=V{\otimes}\mathbb{C}[t,t^{-1}]/\text{Im}(\partial\otimes\id +\id\otimes\frac{d}{d t})\label{844},
\end{align}
where the commutation relation is given by
\begin{align*}
 [ut^a,vt^b]=\sum_{r\geq 0}\begin{pmatrix} a\\r\end{pmatrix}(u_{(r)}v)t^{a+b-r}
\end{align*}
for all $u,v\in V$ and $a,b\in \mathbb{Z}$. 
\begin{Definition}[Frenkel-Zhu~\cite{FZ}, Matsuo-Nagatomo-Tsuchiya~\cite{MNT}]\label{Defi}
We set $\mathcal{U}(V)$ as the quotient algebra of the standard degreewise completion of the universal enveloping algebra of $L(V)$ by the completion of the two-sided ideal generated by
\begin{gather}
(u_{(a)}v)t^b-\sum_{i\geq 0}
\begin{pmatrix}
 a\\i
\end{pmatrix}
(-1)^i(ut^{a-i}vt^{b+i}-{(-1)}^{p(u)p(v)}(-1)^avt^{a+b-i}ut^{i}),\label{241}\\
|0\rangle t^{-1}-1,
\end{gather}
where $|0\rangle$ is the identity vector of $V$.
We call $\mathcal{U}(V)$ the universal enveloping algebra of $V$.
\end{Definition}
By the definition of the universal affine vertex algebra $V^{\kappa}(\mathfrak{g})$ associated with a finite dimensional reductive Lie algebra $\mathfrak{g}$ and the inner product $\kappa$ on $\mathfrak{g}$, $\mathcal{U}(V^\kappa(\mathfrak{g}))$ is the standard degreewise completion of the universal enveloping algebra of the affinization of $\mathfrak{g}$.

Then, induced by the Miura map $\mu$, we obtain the embedding
\begin{equation*}
\widetilde{\mu}\colon \mathcal{U}(\mathcal{W}^{k}(\mathfrak{gl}(M|N),f))\to {\widehat{\bigotimes}}_{1\leq a\leq l}U(\widehat{\mathfrak{gl}}(u_a|q_a)),
\end{equation*}
where ${\widehat{\bigotimes}}_{1\leq a\leq l}U(\widehat{\mathfrak{gl}}(u_a|q_a))$ is the standard degreewise completion of $\bigotimes_{1\leq a\leq l}U(\widehat{\mathfrak{gl}}(u_a|q_a))$. Here after, we embed $U(\widehat{\mathfrak{gl}}(u_a|q_a))$ into $U(\widehat{\mathfrak{gl}}(u_1|q_1))$ by
\begin{gather}
E_{i,j}t^s\mapsto E_{u_1-u_a+i,u_1-u_a+j}\text{ if }i,j>0,\\
E_{i,j}t^s\mapsto E_{-q_1+q_a+i,u_1-u_a+j}\text{ if }i<0,j>0,\\
E_{i,j}t^s\mapsto E_{u_1-u_a+i,-q_1+q_a+j}\text{ if }i>0,j<0\\
E_{i,j}t^s\mapsto E_{-q_1+q_a+i,-q_1+q_a+j}\text{ if }i,j<0.
\end{gather}
We denote the elements
\begin{equation*}
E^{(a)}_{i,j}t^v=1^{\otimes a-1}\otimes E_{i,j}t^v\otimes1^{l-v}\in\bigotimes_{1\leq s\leq l}U(\widehat{\mathfrak{gl}}(u_s|q_s))
\end{equation*}
for $1\leq a\leq l$. Let us assume that $u_s-u_{s+1}\geq2$.
Then, by Theorem~\ref{Generators}, we have
\begin{align}
&\quad\widetilde{\mu}(W^{(2)}_{u_1-u_s+1,u_1-u_s+2}t)\nonumber\\
&=\sum_{1\leq a\leq s}\gamma_{a}E^{(a)}_{u_1-u_s+1,u_1-u_s+2}+\sum_{v\in\mathbb{Z}}\sum_{r_1<r_2}\limits\sum_{x>u_1-u_s}\limits E^{(r_1)}_{x,u_1-u_s+2}t^{-v}E^{(r_2)}_{u_1-u_s+1,x}t^v\nonumber\\
&\quad+\sum_{v\in\mathbb{Z}}\sum_{r_1<r_2}\limits\sum_{x<-q_1+q_s}\limits E^{(r_1)}_{x,u_1-u_s+2}t^{-v}E^{(r_2)}_{u_1-u_s+1,x}t^v\nonumber\\
&\quad-\sum_{v\in\mathbb{Z}}\sum_{r_1>r_2}\limits\sum_{\substack{q_s-q_1\leq x\leq q_{r_1}-q_1-1}}\limits E^{(r_1)}_{x,u_1-u_s+2}t^{-v}E^{(r_2)}_{u_1-u_s+1,x}t^v\nonumber\\
&\quad-\sum_{v\in\mathbb{Z}}\sum_{r_1>r_2}\limits\sum_{\substack{u_1-u_{r_1}+1\leq x\leq u_1-u_s}}\limits E^{(r_1)}_{x,u_1-u_s+2}t^{-v}E^{(r_2)}_{u_1-m_s+1,x}t^v\nonumber\\
&\quad-\sum_{v\geq0}\sum_{r=1}^{s}\limits\sum_{\substack{q_s-q_1\leq x\leq q_{r}-q_1-1}}\limits (E^{(r)}_{x,u_1-u_s+2}t^{-v-1}E^{(r)}_{u_1-u_s+1,x}t^{v+1}-E^{(r)}_{u_1-u_s+1,x}t^{-v}E^{(r)}_{x,u_1-u_s+2}t^{v})\nonumber\\
&\quad-\sum_{v\geq0}\sum_{r=1}^{s}\limits\sum_{\substack{u_1-u_r+1\leq x\leq u_1-u_s}}\limits (E^{(r)}_{x,u_1-u_s+2}t^{-v-1}E^{(r)}_{u_1-u_s+1,x}t^{v+1}+E^{(r)}_{u_1-u_s+1,x}t^{-v}E^{(r)}_{x,u_1-u_s+2}t^{v}).\label{W-gen}
\end{align}
By \eqref{W-gen}, $\widetilde{\mu}(W^{(2)}_{u_1-u_s+1,u_1-u_s+2}t)$ can be regarded as an element of $\widehat{\bigotimes}_{1\leq i\leq s}U(\widehat{\mathfrak{gl}}(u_i|q_i))$.
\section{Affine super Yangians and $W$-superalgebras}
In non-super setting, we \cite{U11} constructed a homomorphism from the affine Yangian to the universal enveloping algebra of a non-rectangular $W$-algebra of type $A$ by using the edge contractions of the affine Yangian. Similarly to \cite{U11}, we will give a homomorphism from the affine super Yangian to the universal enveloping algebra of a non-rectangular $W$-superalgebra of type $A$.
 In this section, we do not identify $I$ with $\mathbb{Z}/(m+n)\mathbb{Z}$.
\begin{Theorem}
\begin{enumerate}
\item For $m_2,n_2\geq 0$, $m_1,n_1\geq 2$ and $m_1+n_1\geq 5$, there exists a homomorphism
\begin{gather*}
\Psi_1^{m_1|n_1,m_1+m_2|n_1+n_2}\colon Y_{\hbar,\ve}(\widehat{\mathfrak{sl}}(m_1|n_1))\to \widetilde{Y}_{\hbar,\ve}(\widehat{\mathfrak{sl}}(m_1+m_2|n_1+n_2))
\end{gather*}
given by
\begin{gather*}
\Psi_1^{m_1|n_1,m_1+m_2|n_1+n_2}(X^+_{i,0})=\begin{cases}
E_{-n_1,1}t&\text{ if }i=-n_1,\\
E_{i,i+1}&\text{ if }1\leq i\leq m_1-1,\\
E_{m_1,-1}&\text{ if }i=m_1,\\
E_{i,i-1}&\text{ if }-n_1+1\leq i\leq -1
\end{cases}\\
\Psi_1^{m_1|n_1,m_1+m_2|n_1+n_2}(X^-_{i,0})=\begin{cases}
-E_{1,-n_1}t^{-1}&\text{ if }i=n_1,\\
E_{i+1,i}&\text{ if }1\leq i\leq m_1-1,\\
E_{-1,m_1}&\text{ if }i=m_1,\\
-E_{i-1,i}&\text{ if }-n_1+1\leq i\leq -1
\end{cases}
\end{gather*}
and
\begin{align*}
\Psi_1^{m_1|n_1,m_1+m_2|n_1+n_2}(X^+_{1,1})
&= X^+_{1,1}-\hbar\displaystyle\sum_{v\geq0} \limits\sum_{z=m_1+1}^{m_1+m_2}\limits E_{1,z}t^{-v-1} E_{z,2}t^{v+1}\\
&\quad+\hbar\displaystyle\sum_{v\geq0} \limits\sum_{z=-n_1-n_2}^{-n_1-1}\limits E_{1,z}t^{-v-1} E_{z,2}t^{v+1}.
\end{align*}
\item For $m_1,n_1\geq 0$, $m_2,n_2\geq 2$ and $m_2+n_2\geq5$, there exists a homomorphism
\begin{gather*}
\Psi_2^{m_2|n_2,m_1+m_2|n_1+n_2}\colon Y_{\hbar,\ve+(m_1-n_1)\hbar}(\widehat{\mathfrak{sl}}(m_2|n_2))\to \widetilde{Y}_{\hbar,\ve}(\widehat{\mathfrak{sl}}(m_1+m_2|n_1+n_2))
\end{gather*}
determined by
\begin{gather*}
\Psi_2^{m_2|n_2,m_1+m_2|n_1+n_2}(X^+_{i,0})=\begin{cases}
E_{-n_1-n_2,m_1+1}t&\text{ if }i=-n_2,\\
E_{m_1+i,m_1+i+1}&\text{ if }1\leq i\leq m_2-1,\\
E_{m_1+m_2,-n_1-1}&\text{ if }i=m_2,\\
E_{-n_1+i,-n_1+i-1}&\text{ if }-n_2+1\leq i\leq -1,
\end{cases}\\ 
\Psi_2^{m_2|n_2,m_1+m_2|n_1+n_2}(X^-_{i,0})=\begin{cases}
-E_{m_1+1,-n_1-n_2}t^{-1}&\text{ if }i=-n_2,\\
E_{m_1+i+1,m_1+i}&\text{ if }1\leq i\leq m_2-1,\\
E_{-n_1-1,m_1+m_2}&\text{ if }i=m_2,\\
-E_{-n_1+i-1,-n_1+i}&\text{ if }-n_2+1\leq i\leq -1,
\end{cases}
\end{gather*}
and
\begin{align*}
\Psi_2^{m_2|n_2,m_1+m_2|n_1+n_2}(X^+_{1,1})
&= X^+_{1+m_1,1}+\hbar\displaystyle\sum_{v\geq0}\limits\sum_{z=-n_1}^{-1} E_{z,2+m_1}t^{-v}E_{1+m_1,z}t^{s}\\
&\quad+\hbar\displaystyle\sum_{v\geq0}\limits\sum_{z=1}^{m_1} E_{z,2+m_1}t^{-v-1} E_{1+m_1,z}t^{v+1}.
\end{align*}
\end{enumerate}
\end{Theorem}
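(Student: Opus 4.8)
The plan is to build $\Psi_1^{m_1|n_1,m_1+m_2|n_1+n_2}$ and $\Psi_2^{m_2|n_2,m_1+m_2|n_1+n_2}$ as iterated composites of the four single-step edge contractions $\Psi_1,\Psi_2,\Psi_3,\Psi_4$ constructed in Sections~6--9, each of which is already known to be an algebra homomorphism. The first observation is that each $\Psi_j$ is homogeneous for the degree grading defining $\widetilde{Y}_{\hbar,\ve}$: one reads off its formulas that it sends $H_{i,r}$ and the $X^\pm_{i,r}$ with $i$ off the affine node to degree-$0$ elements and $X^\pm_{0,r}$ to degree-$(\pm1)$ elements, so each $\Psi_j$ extends to a continuous homomorphism between the degreewise completions, and any finite composite of such maps is again a well-defined algebra homomorphism landing in $\widetilde{Y}_{\hbar,\ve}(\widehat{\mathfrak{sl}}(m_1+m_2|n_1+n_2))$. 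Concretely, I would set $\Psi_1^{m_1|n_1,m_1+m_2|n_1+n_2}$ to be the composite of $m_2$ suitably relabelled copies of $\Psi_1$ (each inserting one even node and fixing $\ve$) followed by $n_2$ copies of $\Psi_4$ (each inserting one odd node at the affine node and fixing $\ve$), and $\Psi_2^{m_2|n_2,m_1+m_2|n_1+n_2}$ to be the composite of $m_1$ copies of $\Psi_3$ (each inserting one even node at the affine node and replacing $\ve$ by $\ve+\hbar$) followed by $n_1$ copies of $\Psi_2$ (each inserting one odd node and replacing $\ve$ by $\ve-\hbar$).

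With these choices the parameter bookkeeping is automatic: tracing the spectral parameter from the target back through the composite picks up $+\hbar$ at each $\Psi_3$ and $-\hbar$ at each $\Psi_2$, for a total shift of $(m_1-n_1)\hbar$, which is exactly the shift appearing in the statement, while the composite for $\Psi_1^{m_1|n_1,m_1+m_2|n_1+n_2}$ leaves $\ve$ unchanged. One also checks that every intermediate algebra $\widehat{\mathfrak{sl}}(m_1+k|n_1)$, $\widehat{\mathfrak{sl}}(m_1+m_2|n_1+k)$ (resp. $\widehat{\mathfrak{sl}}(m_2+k|n_2)$, $\widehat{\mathfrak{sl}}(m_1+m_2|n_2+k)$) satisfies the rank hypotheses under which the relevant single-step contraction was established, which follows from $m_1,n_1\ge2,\ m_1+n_1\ge5$ (resp. $m_2,n_2\ge2,\ m_2+n_2\ge5$). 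It then remains to verify that the composite takes the prescribed values on the generators $X^\pm_{i,0}$ and on the single degree-one element $X^+_{1,1}$; since $Y_{\hbar,\ve}(\widehat{\mathfrak{sl}}(m|n))$ is generated by $\{X^\pm_{i,0}\}_{i}$ together with $X^+_{1,1}$ --- one recovers $\widetilde{H}_{1,1}$ from \eqref{Eq2.3}, hence all $X^\pm_{j,1}$ and $\widetilde{H}_{j,1}$ by propagating \eqref{Eq2.5} and \eqref{Eq2.3} around the connected affine Dynkin diagram --- this pins the homomorphism down uniquely.

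Both verifications are telescopings. On the degree-$0$ generators each single-step contraction acts as the identity away from the contracted edge and sends the edge generator $X^+_{i,0}$ to $[X^+_{i,0},X^+_{i+1,0}]$, so after the $m_2$ steps one gets the chain $E_{m_1,-1}=[X^+_{m_1,0},[X^+_{m_1+1,0},[\cdots,X^+_{m_1+m_2,0}]\cdots]]$ and after the $n_2$ steps the chain $E_{-n_1,1}t$ of the stated lengths, with the remaining nodes relabelled as $E_{i,i\pm1}$; likewise for $\Psi_2^{m_2|n_2,m_1+m_2|n_1+n_2}$, where the new even nodes inserted at the affine node shift the old even nodes of $\widehat{\mathfrak{sl}}(m_2|n_2)$ by $m_1$. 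On $X^+_{1,1}$ each application of $\Psi_1$ inserting an even node $z$ contributes the correction $-\hbar\sum_{s\ge0}E_{1,z}t^{-s-1}E_{z,2}t^{s+1}$ and each application of $\Psi_4$ inserting an odd node $z$ contributes $+\hbar\sum_{s\ge0}E_{1,z}t^{-s-1}E_{z,2}t^{s+1}$; summing over the inserted even nodes $z\in\{m_1+1,\dots,m_1+m_2\}$ and odd nodes $z\in\{-n_1-n_2,\dots,-n_1-1\}$ reproduces exactly the displayed formula, and the correction terms from the $\Psi_3$-steps and $\Psi_2$-steps assemble in the same way into the displayed formula for $\Psi_2^{m_2|n_2,m_1+m_2|n_1+n_2}(X^+_{1,1})$.

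The one genuinely non-formal point --- where care is needed --- is the combinatorial bookkeeping of the node index set, which grows by one label at each of the $m_2+n_2$ (resp. $m_1+n_1$) steps: one must check that after all the steps the inserted even nodes form the consecutive block $\{m_1+1,\dots,m_1+m_2\}$ and the inserted odd nodes the block $\{-n_1-1,\dots,-n_1-n_2\}$ (resp. $\{1,\dots,m_1\}$ and $\{-1,\dots,-n_1\}$, with the nodes of $\widehat{\mathfrak{sl}}(m_2|n_2)$ shifted accordingly), so that the matrix-unit images assemble into the $E_{a,b}t^s$ written in the statement --- equivalently, that the affine node of each intermediate algebra is carried onto the far end of the growing chain at the next step. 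Apart from this bookkeeping, and the routine check that the locally finite sums defining the $\Psi_j$ on degree-one elements remain locally finite after composition, no Yangian relation needs to be re-verified: all of those were dealt with in Sections~6--9.
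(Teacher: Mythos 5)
Your proposal is correct and coincides with the paper's (implicit) argument: the paper states this theorem without a written proof, the intended justification being exactly the iteration of the four single-step edge contractions $\Psi_1,\Psi_2,\Psi_3,\Psi_4$ of Sections 6--9, and your choice of which contractions to compose, the resulting parameter shift $(m_1-n_1)\hbar$, and the telescoping of the correction terms on $X^+_{1,1}$ all reproduce the stated formulas. The two points you flag as needing care --- extending each single-step map to the degreewise completion so that the composites make sense, and the relabelling of indices at each insertion --- are handled at the same (implicit) level of rigor in the paper itself.
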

For $1\leq s\leq l$, we define $\ve_s=\hbar(k+N-u_s+q_s)$. In the case that $u_s-u_{s+1},q_s-q_{s+1}\geq 2$ and $u_s-q_s+q_s-q_{s+1}\geq5$, let us define the homomorphism
\begin{equation*}
\Delta^{s}\colon Y_{\hbar,\ve_s}(\widehat{\mathfrak{sl}}(u_s-u_{s+1}|q_s-q_{s+1}))\to\bigotimes_{1\leq a\leq s}Y_{\hbar,\ve_a}(\widehat{\mathfrak{sl}}(u_a|q_a))
\end{equation*}
defined by
\begin{equation*}
\Delta^{s}=(\prod_{a=1}^{s-1}\limits (((\Psi_2^{q_{a+1}|u_{a+1},q_a|u_a}\otimes1)\circ\Delta)\otimes\id^{s-a-1})\circ\Psi_1^{u_s-u_{s+1}|q_s-q_{s+1},z_s|q_s})\otimes\id^{l-s}.
\end{equation*}
\begin{Theorem}\label{Main3}
Assume that $\ve_s=\hbar(k+M-N-u_s+q_s)$. There exists an algebra homomorphism 
\begin{equation*}
\Phi\colon Y_{\hbar,\ve_s}(\widehat{\mathfrak{sl}}(u_s-u_{s+1}|q_s-q_{s+1}))\to \mathcal{U}(\mathcal{W}^{k}(\mathfrak{gl}(N),f))
\end{equation*} 
determined by
\begin{equation}
\bigotimes_{1\leq a\leq s}\ev_{\hbar,\ve_a}^{u_a|q_a,-x_a\hbar}\circ\Delta^{s}=\widetilde{\mu}\circ\Phi,\label{eqqq}
\end{equation}
where $x_a=\gamma_a+q_a-q_s-\dfrac{u_a-u_s}{2}$.
\end{Theorem}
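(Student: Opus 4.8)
The plan is to exploit that the left-hand side of \eqref{eqqq} is already an algebra homomorphism, so that the content of Theorem~\ref{Main3} is an image-containment statement. Write $\Xi$ for the composite $\bigl(\bigotimes_{1\leq a\leq s}\ev_{\hbar,\ve_a}^{u_a|q_a,-x_a\hbar}\bigr)\circ\Delta^{s}$, a map from $Y_{\hbar,\ve_s}(\widehat{\mathfrak{sl}}(u_s-u_{s+1}|q_s-q_{s+1}))$ into the degreewise completion $\widehat{\bigotimes}_{1\leq a\leq l}U(\widehat{\mathfrak{gl}}(u_a|q_a))$. Since $\Delta^{s}$ is built from the edge contractions $\Psi_1^{\bullet}$, $\Psi_2^{\bullet}$ and the coproduct $\Delta$, all homomorphisms, and each $\ev_{\hbar,\ve_a}^{u_a|q_a,-x_a\hbar}$ is a homomorphism by Theorem~\ref{thm:main}, the map $\Xi$ is a homomorphism. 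One checks along the way that the $\ve$-shifts produced by the successive $\Psi_2^{\bullet}$'s chain correctly, $\ve_{a+1}=\ve_a+\bigl((u_a-u_{a+1})-(q_a-q_{a+1})\bigr)\hbar$, so that they are compatible with the requirement $c=\ve_a/\hbar$ in each evaluation factor; this is exactly what the choice $\ve_a=\hbar(k+M-N-u_a+q_a)$ arranges (and $u_s-u_{s+1},q_s-q_{s+1}\geq 2$, $u_s-u_{s+1}+q_s-q_{s+1}\geq 5$ are what make $Y_{\hbar,\ve_s}(\widehat{\mathfrak{sl}}(u_s-u_{s+1}|q_s-q_{s+1}))$ and $\Psi_1^{\bullet}$ available). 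As $\Xi$ is degreewise and the Miura map induces an \emph{embedding} $\widetilde{\mu}$, the element $\Phi(g)$ is forced to equal $\widetilde{\mu}^{-1}(\Xi(g))$; so the whole statement reduces to showing that $\Xi$ takes values in $\widetilde{\mu}\bigl(\mathcal{U}(\mathcal{W}^{k}(\mathfrak{gl}(M|N),f))\bigr)$, after which $\Phi=\widetilde{\mu}^{-1}\circ\Xi$ is automatically an algebra homomorphism and \eqref{eqqq} holds by construction.

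Because $\mathrm{Im}\,\widetilde{\mu}$ is a subalgebra and $\Xi$, $\widetilde{\mu}$ are degreewise, it suffices to verify the containment on a topological generating set. Writing $p=u_s-u_{s+1}$, $q=q_s-q_{s+1}$, the relations \eqref{Eq2.2}--\eqref{Eq2.9} show that $Y_{\hbar,\ve_s}(\widehat{\mathfrak{sl}}(p|q))$ is topologically generated by $\{X^{\pm}_{i,0}\mid i\in\mathbb{Z}/(p+q)\mathbb{Z}\}$ together with $X^{+}_{1,1}$: indeed $H_{i,0}=[X^{+}_{i,0},X^{-}_{i,0}]$ and $H_{1,1}=[X^{+}_{1,1},X^{-}_{1,0}]$, and the remaining $X^{\pm}_{j,1}$, $H_{j,1}$ are produced inductively around the affine cycle from $\widetilde{H}_{j,1}$ via \eqref{Eq2.5}--\eqref{Eq2.7}. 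So one is reduced to computing $\Xi(X^{\pm}_{i,0})$ and $\Xi(X^{+}_{1,1})$ and identifying them with $\widetilde{\mu}$ of explicit elements. For the degree-zero generators this is immediate: from the formulas for $\Psi_1^{\bullet}$, $\Psi_2^{\bullet}$ and $\Delta$ on $X^{\pm}_{i,0}$, the element $\Delta^{s}(X^{\pm}_{i,0})$ is a sum of terms $1^{\otimes r-1}\otimes X^{\pm}_{\bullet,0}\otimes 1^{\otimes s-r}$, each of which the evaluation maps send to a matrix unit $E^{(r)}_{\bullet,\bullet}$; collecting these over $1\le r\le s$ gives exactly $\widetilde{\mu}(W^{(1)}_{a,b})$ of Theorem~\ref{Generators} for the pair of rows $a,b\in I_s\setminus I_{s+1}$ attached to the node $i$ (for $1\le i\le p-1$ one gets $a=u_1-u_s+i$, $b=u_1-u_s+i+1$, with the analogously shifted pairs for the negative nodes and the affine node), since $\mu(W^{(1)}_{a,b})=\sum e_{i,j}[-1]$ is precisely such a block sum of matrix units under the embeddings of the $U(\widehat{\mathfrak{gl}}(u_r|q_r))$ into the tensor product. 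Thus $\Phi(X^{\pm}_{i,0})$ is forced to be the corresponding $W^{(1)}$-element.

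The substantive step is $\Xi(X^{+}_{1,1})$, whose target element is $\widetilde{\mu}(W^{(2)}_{u_1-u_s+1,\,u_1-u_s+2}t)$ with the explicit expansion recorded in \eqref{W-gen}. Starting from the formula for $\Psi_1^{p|q,\,u_s|q_s}(X^{+}_{1,1})$ one pushes it successively through each $(\Psi_2^{\bullet}\otimes\id)\circ\Delta$ --- every application of $\Delta$ contributing a $B_i$-type cross term spread across two adjacent tensor slots, every $\Psi_2^{\bullet}$ re-indexing and adding its own correction sums and $\hbar$-linear terms --- and finally applies $\bigotimes_a\ev_{\hbar,\ve_a}^{u_a|q_a,-x_a\hbar}$, which on each factor replaces the current generator by a constant multiple of $E^{(a)}_{u_1-u_s+1,u_1-u_s+2}$ (carrying the constant $-x_a\hbar$ together with the $\hat{i}$-shift of Theorem~\ref{thm:main}) plus normal-ordered double sums $\hbar\sum_{v\geq0}\sum_k E^{(a)}_{\bullet,k}t^{-v}E^{(a)}_{k,\bullet}t^{v}$. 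One then verifies that all of these pieces reassemble exactly into the right-hand side of \eqref{W-gen}. Here the specific value $x_a=\gamma_a+q_a-q_s-\dfrac{u_a-u_s}{2}$ is precisely what makes, once all the $\hbar$-linear corrections, the $\hat{i}$-shifts, the $\Psi_2^{\bullet}$-corrections and the $\widetilde{H}_{i,1}=H_{i,1}-\tfrac{\hbar}{2}H_{i,0}^2$ terms are summed, the linear coefficient of $E^{(a)}_{u_1-u_s+1,u_1-u_s+2}$ collapse to $\gamma_a$, while the stray diagonal and quadratic remainders (the $E_{\bullet,\bullet}$-type leftovers and the $\hbar E_{\bullet,c}E_{c,\bullet'}$-type terms, $c$ the contracted index, that kept cancelling in the well-definedness computations of $\Psi_1$ and $\Psi_2$) cancel again.

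The main obstacle is precisely this last reassembly. One has to track, along the entire chain of $\Psi$'s, $\Delta$'s and evaluation maps, which summation ranges of the auxiliary index actually survive --- which ordered pairs of blocks $r_1<r_2$ or $r_1>r_2$ occur, and whether the accompanying power of $t$ is $-v$ or $-v-1$ --- and with which signs and which parity factors $(-1)^{p(i)}$, $(-1)^{p(e_{i,v})p(e_{x,j})}$, then match the result term by term against the sums in \eqref{W-gen}. This is lengthy but essentially routine, of the same nature as --- and using the same auxiliary identities \eqref{gather1}, \eqref{concl-1}, \eqref{concl-2} as --- the well-definedness arguments for $\Psi_1$ and $\Psi_2$; the difficulty is bookkeeping care (in particular carrying the super signs through every step), not a new idea.
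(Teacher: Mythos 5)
Your proposal follows essentially the same route as the paper: reduce to the generators $X^\pm_{i,0}$ and $X^+_{1,1}$, note that the degree-zero images assemble into the $W^{(1)}$-elements, and match $\Xi(X^+_{1,1})$ against the Miura image \eqref{W-gen}, with $x_a$ tuned so that the linear coefficients collapse to $\gamma_a$ and the cross terms cancel. The only caveats are that the term-by-term reassembly you defer as bookkeeping is in fact the entire content of the paper's argument (its equations \eqref{B}--\eqref{D}, \eqref{eval} and \eqref{111}--\eqref{W4}), and that the precise target is not $\widetilde{\mu}(W^{(2)}_{u_1-u_s+1,u_1-u_s+2}t)$ alone but $\widetilde{\mu}$ of $-\hbar W^{(2)}_{u_1-u_s+1,u_1-u_s+2}t$ plus explicit linear and quadratic $W^{(1)}$-corrections --- harmless for your image-containment formulation, since those corrections visibly lie in the image of $\widetilde{\mu}$.
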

\begin{proof}
Let us set
\begin{gather*}
\Phi(X^+_{i,0})=\begin{cases}
W^{(1)}_{u_1-u_s+i,u_1-u_s+i+1}&\text{ if }1\leq i\leq u_s-u_{s+1}-1,\\
W^{(1)}_{u_1-u_s+i,-q_1+q_s-1}&\text{ if }i=u_s-u_{s+1},\\
W^{(1)}_{u_1-u_s+i,-q_1+q_s+i-1}&\text{ if }-q_s+q_{s+1}+1\leq i\leq-1,\\
W^{(1)}_{-q_1+q_{s+1},u_1-u_s+1}t&\text{ if }i=-q_s+q_{s+1},
\end{cases}\\
\Phi(X^-_{i,0})=\begin{cases}
W^{(1)}_{u_1-u_s+i+1,u_1-u_s+i}&\text{ if }1\leq i\leq u_s-u_{s+1}-1,\\
W^{(1)}_{-q_1+q_s-1,u_1-u_s+i}&\text{ if }i=u_s-u_{s+1},\\
W^{(1)}_{-q_1+q_s+i-1,u_1-u_s+i}&\text{ if }-q_s+q_{s+1}+1\leq i\leq-1,\\
W^{(1)}_{u_1-u_s+1,-q_1+q_{s+1}}t^{-1}&\text{ if }i=-q_s+q_{s+1},
\end{cases}
\end{gather*}
and
\begin{align*}
\Phi(X^+_{1,1})&=
-\hbar W^{(2)}_{u_1-u_s+1,u_1-u_s+2}t-\dfrac{1}{2}\hbar W^{(1)}_{u_1-u_s+1,u_1-u_s+2}\\
&\quad+\hbar\displaystyle\sum_{v\geq0}\limits W^{(1)}_{u_1-u_s+1,u_1-u_s+1}t^{-v}W^{(1)}_{u_1-u_s+1,u_1-u_s+2}t^v\\
&\quad+\hbar\displaystyle\sum_{s\geq 0} \limits\displaystyle\sum_{z=2}^{u_s-u_{s+1}}\limits W^{(1)}_{u_1-u_s+1,u_1-u_s+z}t^{-v-1} W^{(1)}_{u_1-u_s+z,u_1-u_s+2}t^{v+1}\\
&\quad-\hbar\displaystyle\sum_{v\geq0}\limits\displaystyle\sum_{z=-q_s+q_{s+1}}^{-1} \limits W^{(1)}_{-q_1+q_s+1,-q_1+q_s+z}t^{-v-1} W^{(1)}_{-q_1+q_s+z,-q_1+q_s+2}t^{v+1}.
\end{align*}
It is enough to show the relation
\begin{equation}
\bigotimes_{1\leq a\leq s}\ev_{\hbar,\ve_a}^{u_a|q_a,-x_a\hbar}\circ\Delta^{s}(X^\pm_{i,0})=\widetilde{\mu}\circ\Phi(X^\pm_{i,0})\label{eqqq1}
\end{equation}
for $1\leq i\leq u_s-u_{s+1}$ and $-q_s+q_{s+1}\leq i\leq -1$ and
\begin{equation}
\bigotimes_{1\leq a\leq s}\ev_{\hbar,\ve_a}^{u_a|q_a,-x_a\hbar}\circ\Delta^{s}(X^+_{1,1})=\widetilde{\mu}\circ\Phi(X^+_{1,1}).\label{eqqq2}
\end{equation}
The relation \eqref{eqqq1} is obvious so that we only show the relation \eqref{eqqq2}. By the definition of edge contractions and coproduct for the affine super Yangian, we obtain
\begin{align*}
\bigotimes_{1\leq a\leq s}\ev_{\hbar,\ve_a}^{u_a|q_a,-x_a\hbar}\circ\Delta^s(X^+_{1,1})&=\sum_{1\leq a\leq s}\ev_{\hbar,\ve_a}^{u_a|q_a,-x_a\hbar}(X^+_{1+u_a-u_s,1})^{(a)}+B+C+D,
\end{align*}
where
\begin{align}
B&=\hbar\displaystyle\sum_{v\geq0} \limits \sum_{r_1<r_2}\limits\displaystyle\sum_{z=1+u_{r_2}-u_s}^{1+u_1-u_s}\limits E_{1+u_1-u_s,z}^{(r_1)}t^{-v}E_{z,2+u_1-u_s}^{(r_2)}t^v\nonumber\\
&\quad-\hbar\displaystyle\sum_{v\geq0} \limits \sum_{r_1<r_2}\limits\displaystyle\sum_{z=1+u_{r_2}-u_s}^{1+u_1-u_s}\limits E_{z,2+u_1-u_s}^{(r_1)}t^{-v-1}E_{1+u_1-u_s,z}^{(r_2)}t^{v+1}\nonumber\\
&\quad+\hbar\sum_{a=1}^{s}\displaystyle\sum_{v\geq0} \limits \sum_{r_1<r_2}\limits\displaystyle\sum_{z=2+u_1-u_s}^{u_1}\limits E_{1+u_1-u_s,z}^{(r_1)}t^{-v-1}E_{z,2+u_1-u_s}^{(r_2)}t^{v+1}\nonumber\\
&\quad-\hbar\sum_{a=1}^{s}\displaystyle\sum_{v\geq0} \limits \sum_{r_1<r_2}\limits\displaystyle\sum_{z=2+u_1-u_s}^{u_1}\limits E_{z,2+u_1-u_s}^{(r_1)}t^{-v}E_{1+u_1-u_s,z}^{(r_2)}t^{s}\nonumber\\
&\quad-\hbar\displaystyle\sum_{v\geq0} \limits \sum_{r_1<r_2}\limits\displaystyle\sum_{z=-q_1}^{-1-q_1+q_{r_2}}\limits E_{1+u_1-u_s,z}^{(r_1)}t^{-v-1}E_{z,2+u_1-u_s}^{(r_2)}t^{v+1}\nonumber\\
&\quad-\hbar\displaystyle\sum_{v\geq0} \limits \sum_{r_1<r_2}\limits\displaystyle\sum_{z=-q_1}^{-1-q_1+q_{r_2}}\limits E_{z,2+u_1-u_s}^{(r_1)}t^{-v}E_{1+u_1-u_s,z}^{(r_2)}t^{s},\label{B}\\
C&=-\hbar\sum_{r=1}^{s}\limits\displaystyle\sum_{v\geq0} \limits\sum_{x=u_1-u_{s+1}+1}^{u_1}\limits E^{(r)}_{1+u_1-u_s,x}t^{-v-1} E^{(r)}_{x,2+u_1-u_s}t^{v+1}\nonumber\\
&\quad-\hbar\sum_{r_1<r_2}\limits\displaystyle\sum_{v\geq0} \limits\sum_{x=u_1-u_{s+1}+1}^{u_1}\limits E^{(r_1)}_{1+u_1-u_s,x}t^{-v-1} E^{(r_2)}_{x,2+u_1-u_s}t^{v+1}\nonumber\\
&\quad-\hbar\sum_{r_1<r_2}\limits\displaystyle\sum_{v\geq0} \limits\sum_{x=u_1-u_{s+1}+1}^{u_1}\limits E^{(r_2)}_{1+u_1-u_s,x}t^{-v-1} E^{(r_1)}_{x,2+u_1-u_s}t^{v+1}\nonumber\\
&\quad+\hbar\sum_{r=1}^l\limits\displaystyle\sum_{v\geq0} \limits\sum_{x=-q_1}^{-q_1+q_{s+1}-1}\limits E^{(r)}_{1+u_1-u_s,x}t^{-v-1} E^{(r)}_{x,2+u_1-u_s}t^{v+1}\nonumber\\
&\quad+\hbar\sum_{r_1<r_2}\limits\displaystyle\sum_{v\geq0} \limits\sum_{x=-q_1}^{-q_1+q_{s+1}-1}\limits E^{(r_1)}_{1+u_1-u_s,x}t^{-v-1} E^{(r_2)}_{x,2+u_1-u_s}t^{v+1}\nonumber\\
&\quad+\hbar\sum_{r_1<r_2}\limits\displaystyle\sum_{v\geq0} \limits\sum_{x=-q_1}^{-q_1+q_{s+1}-1}\limits E^{(r_2)}_{1+u_1-u_s,x}t^{-v-1} E^{(r_1)}_{x,2+u_1-u_s}t^{v+1},\label{C}\\
D&=\hbar\sum_{r=1}^{s}\displaystyle\sum_{v\geq0}\limits\sum_{x=-q_1+q_s}^{-1-q_1+q_{r_2}} E^{(r)}_{x,2+u_1-u_s}t^{-v}E^{(r)}_{1+u_1-u_s,x}t^{s}\nonumber\\
&\quad+\hbar\sum_{r_1<r_2}\limits\displaystyle\sum_{v\geq0}\limits\sum_{x=-q_1+q_s}^{-1-q_1+q_{r_2}} E^{(r_2)}_{x,2+u_1-u_s}t^{-v}E^{(r_1)}_{1+u_1-u_s,x}t^{s}\nonumber\\
&\quad+\hbar\sum_{r_1<r_2}\limits\displaystyle\sum_{v\geq0}\limits\sum_{x=-q_1+q_s}^{-1-q_1+q_{r_2}} E^{(r_1)}_{x,2+u_1-u_s}t^{-v}E^{(r_2)}_{1+u_1-u_s,x}t^{s}\nonumber\\
&\quad+\hbar\sum_{r=1}^{s}\displaystyle\sum_{v\geq0}\limits\sum_{x=u_1-u_r+1}^{u_1-u_s} E^{(r)}_{x,2+u_1-u_s}t^{-v-1} E^{(r)}_{1+u_1-u_s,x}t^{v+1}\nonumber\\
&\quad+\hbar\sum_{r_1<r_2}\displaystyle\sum_{v\geq0}\limits\sum_{x=u_1-u_{r_2}+1}^{u_1-u_s} E^{(r_1)}_{x,2+u_1-u_s}t^{-v-1} E^{(r_2)}_{1+u_1-u_s,x}t^{v+1}\nonumber\\
&\quad+\hbar\sum_{r_2>r_1}\displaystyle\sum_{v\geq0}\limits\sum_{x=u_1-u_{r_1}+1}^{u_1-u_s} E^{(r_1)}_{x,2+u_1-u_s}t^{-v-1} E^{(r_2)}_{1+u_1-u_s,x}t^{v+1}.\label{D}
\end{align}
We note that $B$, $C$ and $D$ come from the coproduct for the affine Yangian, the homomorphism $\Psi_1^{m_1|n_1,m_1+m_2|n_1+n_2}$ and the homomorphism $\Psi_2^{m_2|n_2,m_1+m_2|n_1+n_2}$ respectively.
By the definition of the evaluation map, we have
\begin{align}
\sum_{1\leq a\leq s}\ev_{\hbar,\ve_a}^{u_a|q_a,-x_a\hbar}(X^+_{1+u_a-u_s,1})^{(a)}
&=-\hbar\sum_{1\leq r\leq s}\dfrac{1+u_r-u_s}{2}E^{(r)}_{1+u_1-u_s,2+u_1-u_s}\nonumber\\
&\quad+\hbar \sum_{r=1}^{s}\displaystyle\sum_{v\geq0}  \limits\displaystyle\sum_{g=1+u_1-u_r}^{1+u_1-u_s}\limits E^{(r)}_{1+u_1-u_s,g}t^{-v}E^{(r)}_{g,2+u_1-u_s}t^v\nonumber\\
&\quad+\hbar \sum_{r=1}^{s}\displaystyle\sum_{v\geq0} \limits\displaystyle\sum_{g=2+u_1-u_s}^{u_1}\limits E^{(r)}_{1+u_1-u_s,g}t^{-v-1}E^{(r)}_{g,2+u_1-u_s}t^{v+1}\nonumber\\
&\quad-\hbar \sum_{r=1}^{s}\displaystyle\sum_{v\geq0} \limits\displaystyle\sum_{x=-q_1}^{-1-q_1+q_r}\limits E^{(r)}_{1+u_1-u_s,g}t^{-v-1}E^{(r)}_{g,2+u_1-u_s}t^{v+1}.\label{eval}
\end{align}
By a direct computation, we obtain
\begin{align}
&\quad\eqref{W-gen}_1+\eqref{eval}_1=-\sum_{1\leq a\leq s}(q_r-q_s+\dfrac{\hbar}{2})E^{(a)}_{u_1-u_s+1,u_1-u_s+2},\label{111}\\
&\quad\eqref{W-gen}_2+\eqref{B}_2+\eqref{B}_4+\eqref{C}_3+\eqref{D}_5\nonumber\\
&=\hbar\displaystyle\sum_{v\geq0} \limits \sum_{r_1<r_2}\limits E_{1+u_1-u_s,2+u_1-u_s}^{(r_1)}t^{-v}E_{1+u_1-u_s,1+u_1-u_s}^{(r_2)}t^{s}\nonumber\\
&\quad+\hbar\sum_{r_1<r_2}\limits\displaystyle\sum_{v\geq0} \limits\sum_{g=u_1-u_s+2}^{u_1-u_{s+1}}\limits E^{(r_2)}_{1+u_1-u_s,g}t^{-v-1} E^{(r_1)}_{g,2+u_1-u_s}t^{v+1},\label{222}\\
&\quad\eqref{W-gen}_3+\eqref{B}_6+\eqref{C}_6+\eqref{D}_3\nonumber\\
&=\hbar\sum_{v\geq0}\sum_{r_1<r_2}\limits\sum_{-q_1+q_{s+1}\leq x<-q_1+q_s}\limits E^{(r_1)}_{x,z_1-u_s+2}t^{-v}E^{(r_2)}_{u_1-u_s+1,x}t^v,\label{333}\\
&\quad\eqref{W-gen}_4+\eqref{B}_5+\eqref{C}_5+\eqref{D}_2\nonumber\\
&=-\hbar\sum_{r_1<r_2}\limits\displaystyle\sum_{v\geq0} \limits\sum_{x=-q_1+q_s-1}^{-q_1+q_{s+1}}\limits E^{(r_1)}_{1+u_1-u_s,x}t^{-v-1} E^{(r_2)}_{x,2+u_1-u_s}t^{v+1},\label{444}\\
&\quad\eqref{W-gen}_5+\eqref{B}_1+\eqref{B}_3+\eqref{C}_2+\eqref{D}_6\nonumber\\
&=\hbar\displaystyle\sum_{v\geq0} \limits \sum_{r_1<r_2}\limits E_{1+u_1-u_s,1+u_1-u_s}^{(r_1)}t^{-v}E_{1+u_1-u_s,2+u_1-u_s}^{(r_2)}t^v\nonumber\\
&\quad+\hbar\displaystyle\sum_{v\geq0} \limits \sum_{r_1<r_2}\limits\displaystyle\sum_{z=2+u_1-u_s}^{u_1-u_{s+1}}\limits E_{1+u_1-u_s,z}^{(r_1)}t^{-v-1}E_{z,2+u_1-u_s}^{(r_2)}t^{v+1},\label{555}\\
&\quad\eqref{W-gen}_6+\eqref{C}_4+\eqref{D}_4+\eqref{eval}_4\nonumber\\
&=-\hbar\sum_{r=1}^{s}\displaystyle\sum_{v\geq0}\limits\sum_{x=-q_1+q_s-1}^{-q_1+q_{s+1}} E^{(r)}_{2+u_1-u_s,x}t^{-v-1}E^{(r)}_{x,1+u_1-u_s}t^{v+1}+\hbar\sum_{r=1}^{s}(q_r-q_s)E^{(r)}_{u_1-u_s+1,u_1-u_s+2}t^{s},\label{666}\\
&\quad\eqref{W-gen}_7+\eqref{C}_1+\eqref{D}_1+\eqref{eval}_2+\eqref{eval}_3\nonumber\\
&=\hbar \sum_{r=1}^{s}\displaystyle\sum_{v\geq0}  \limits E^{(r)}_{1+u_1-u_s,1+u_1-u_s}t^{-v}E^{(r)}_{1+u_1-u_s,2+u_1-u_s}t^v\nonumber\\
&\quad+\hbar \sum_{r=1}^{s}\displaystyle\sum_{v\geq0} \limits\displaystyle\sum_{x=2+u_1-u_s}^{u_1-u_{s+1}}\limits E^{(r)}_{1+u_1-u_s,x}t^{-v-1}E^{(r)}_{x,2+u_1-u_s}t^{v+1}.\label{777}
\end{align}
By a direct computation, we obtain
\begin{align}
&\quad\eqref{111}+\eqref{666}_2=-\dfrac{1}{2}\hbar W^{(1)}_{u_1-u_a+1,q_1-q_a+2},\label{W1}\\
&\quad\eqref{222}_1+\eqref{555}_1+\eqref{777}_1=\hbar\displaystyle\sum_{v\geq0}\limits W^{(1)}_{u_1-u_a+1,u_1-u_a+1}t^{-v}W^{(1)}_{u_1-u_a+1,u_1-u_a+2}t^v,\label{W2}\\
&\quad\eqref{222}_2+\eqref{555}_2+\eqref{777}_2=\hbar\displaystyle\sum_{v\geq 0} \limits\displaystyle\sum_{z=2}^{u_s-u_{s+1}}\limits W^{(1)}_{u_1-u_a+1,u_1-u_a+u}t^{-v-1} W^{(1)}_{u_1-u_a+z,u_1-u_a+2}t^{v+1},\label{W3}\\
&\quad\eqref{333}+\eqref{444}+\eqref{666}_1\nonumber\\
&=-\hbar\displaystyle\sum_{v\geq0}\limits\displaystyle\sum_{z=-q_a+q_{a+1}}^{-1} \limits W^{(1)}_{-q_1+q_a+1,-q_1+q_a+u}t^{-v-1} W^{(1)}_{-q_1+q_a+z,-q_1+q_a+2}t^{v+1}.\label{W4}
\end{align}
By adding \eqref{W1}-\eqref{W4}, we obtain
\begin{align*}
&\quad\bigotimes_{1\leq a\leq s}\ev_{\hbar,\ve_a}^{u_a|q_a,-x_a\hbar}\circ\Delta^{s}(X^+_{1,1})+\hbar W^{(2)}_{u_1-u_s+1,u_1-u_s+2}t\\
&=-\dfrac{1}{2}\hbar W^{(1)}_{u_1-u_s+1,u_1-u_s+2}+\hbar\displaystyle\sum_{v\geq0}\limits W^{(1)}_{u_1-u_s+1,u_1-u_s+1}t^{-v}W^{(1)}_{u_1-u_s+1,u_1-u_s+2}t^v\\
&\quad+\hbar\displaystyle\sum_{v\geq 0} \limits\displaystyle\sum_{z=2}^{u_s-u_{s+1}}\limits W^{(1)}_{u_1-u_s+1,u_1-u_s+z}t^{-v-1} W^{(1)}_{u_1-u_s+z,u_1-u_s+2}t^{v+1}\\
&\quad-\hbar\displaystyle\sum_{v\geq0}\limits\displaystyle\sum_{z=-q_s+q_{s+1}}^{-1} \limits W^{(1)}_{-q_1+q_s+1,-q_1+q_s+z}t^{-v-1} W^{(1)}_{-q_1+q_s+z,-q_1+q_s+2}t^{v+1}.
\end{align*}
Thus, we obtain \eqref{eqqq2}.
\end{proof}
\bibliographystyle{plain}
\bibliography{syuu}
\end{document}